\title{An $L_{\infty}$ structure on symplectic cohomology}
\author{Matthew Strom Borman, Mohamed El Alami, and Nick Sheridan}
\begin{document}
\begin{abstract}
    We construct the $L_\infty$ structure on symplectic cohomology of a Liouville domain, together with an enhancement of the closed--open map to an $L_\infty$ homomorphism from symplectic cochains to Hochschild cochains on the wrapped Fukaya category. Features of our construction are that it respects a modified action filtration (in contrast to Pomerleano--Seidel's construction); it uses a compact telescope model (in contrast to Abouzaid--Groman--Varolgunes' construction); and it is adapted to the purposes of our follow-up work where we construct Maurer--Cartan elements in symplectic cochains which are associated to a normal-crossings compactification of the Liouville domain.
\end{abstract}
\maketitle
\tableofcontents

\section{Introduction}

\subsection{$L_\infty$ structure on Floer cochains: the compact case}

Let $M$ be a compact symplectic manifold. 
We recall the construction of the \emph{Floer cochain complex} of $M$: it is a cochain complex $(CF^*(M;H),\partial)$, whose generators are orbits of an auxiliary Hamiltonian function $H: S^1 \times M \to \R$. 
We also recall the proof that two different choices of auxiliary data give rise to quasi-isomorphic Floer cochain complexes, with the quasi-isomorphism given by a \emph{continuation map}. 
This shows that the cohomology of the Floer cochain complex (the \emph{Hamiltonian Floer cohomology of $M$}) is independent of auxiliary choices, in particular is an invariant of $M$. 
Finally, we recall that this invariant does not contain any new information in itself:\footnote{Of course, the fact that its generators have geometric significance is very interesting; we are just saying that the cohomology group is not an interesting invariant in itself.} the \emph{Piunikhin-Salamon-Schwarz isomorphism} is a quasi-isomorphism from the cohomology of $M$ to its Hamiltonian Floer cohomology, so the Hamiltonian Floer cohomology is just the cohomology of $M$.

However, the Floer cochain complex of $M$ is endowed with an interesting algebraic structure: an action of the chains on the framed little discs operad \cite{Abouzaid-Groman-Varolgunes}. 
This includes, as part of the data, an $L_\infty$ structure which should control the deformations of the structure, see \cite{Fabert}.  
However, one expects the PSS isomorphism to intertwine this structure with the action of the gravity operad on the cohomology of $M$ arising from Gromov--Witten theory, via the map from the framed little discs operad to the gravity operad which forgets the framings. 
The chains in framed little discs which define the $L_\infty$ operations map to degenerate chains in the gravity operad, so one expects the $L_\infty$ structure on the Floer cochain complex to be formal and abelian (and one can prove this by enhancing the PSS isomorphism to an $L_\infty$ quasi-isomorphism). 

This means that the space of solutions to the Maurer--Cartan equation for the $L_\infty$ structure on Floer cochains can be identified with the cohomology of $M$, and in particular is smooth.  
In fact, under mirror symmetry, this formality corresponds to formality of the DG Lie algebra of polyvector fields on the mirror variety: this is at the heart of the Bogomolov-Tian-Todorov \cite{Bogomolov1978,Tian1987,Todorov1989} theorem about unobstructedness of the moduli space of Calabi-Yaus, and of Kontsevich's deformation quantization \cite{Kontsevich-deformation-quantization}. 

There is an $L_\infty$ homomorphism called the closed--open map, mapping from Floer cochains to the Hochschild cochains on the Fukaya category endowed with their natural structure as differential graded Lie algebra:
\begin{equation}
    \label{eq:CO}
\CO:CF^*(M;H) \dashrightarrow CC^*(\mathcal{F}(M)). 
\end{equation}
In good cases this map is a quasi-isomorphism.  With that in mind, the closed-open map \eqref{eq:CO} may be thought of as the A-side analogue of Kontsevich's celebrated formality theorem, see \cite[\S 4.6.2]{Kontsevich-deformation-quantization}. 

When $\CO$ is a quasi-isomorphism, abstract deformations of the Fukaya category are in bijection with Maurer--Cartan elements in $CC^*(\mathcal{F}(M))$, which in turn are in bijection with classes in $H^*(M)$ by formality and abelianness. 
The corresponding deformations of $\mathcal{F}(M)$ are precisely the so-called `bulk deformations'; so when $\CO$ is a quasi-isomorphism, the bulk deformations represent all abstract deformations of $\mathcal{F}(M)$. 

\subsection{The case of Liouville domains}

The purpose of this paper is to construct an $L_\infty$ structure on the symplectic cochains on a Liouville domain $X$, and the closed--open map to the Hochschild cochains on its wrapped Fukaya category. 
In general, this $L_\infty$ structure will be non-formal, in contrast to the case of compact $M$: that is because it receives contributions from non-constant orbits, on which loop rotation acts non-trivially, so that the action of framed little discs does not factor through an action of the gravity operad.

We work throughout over the coefficient ring $\Z$. 
We use the telescope model for symplectic cochains:
    $$SC^*(X) := \bigoplus_{n=1}^\infty CF^*(X,H_n)[t],$$
where $(H_n)_{n=1}^\infty$ is an increasing sequence of Hamiltonians on the completion $\widehat{X}$ of $X$, cofinal among those which are negative on $X$ (\`a la Viterbo \cite{Viterbo}); and $t$ is a formal parameter of degree $-1$ satisfying $t^2=0$. 
The differential $\partial$ is given by
$$\partial(x+ty) = \delta(x) - t\delta(y) + \kappa(y)-y,$$
where $\delta:CF^*(X,H_n) \to CF^*(X,H_n)[1]$ is the usual Floer differential, and $\kappa:CF^*(X,H_n) \to CF^*(X,H_{n+1})$ is a continuation map. 

Our first main construction is that of an $L_\infty$ structure on $SC^*(X)$: this consists of structure maps
$$\ell^d:SC^*(X)^{\otimes d} \to SC^*(X)[3-2d]$$
for $d \ge 1$, with $\ell^1 = \partial$, which are graded-commutative:
$$\ell^d(x_{\sigma(1)},\ldots,x_{\sigma(d)}) = (-1)^\epsilon \ell^d(x_1,\ldots,x_d)$$
and also satisfy the $L_\infty$ relations:
\begin{equation}
    \sum_{\substack{1 \le j \le d\\ \sigma \in \Unsh(j,d)}} (-1)^{\epsilon} \ell^{d-j+1}(\ell^j(x_{\sigma(1)},\dots,x_{\sigma(j)}),x_{\sigma(j+1)},\dots,x_{\sigma(d)}) = 0.
\end{equation}
In the equations above, and the rest of the paper, we use the abbreviation $$\epsilon := \sum_{\substack{i<j \\ \sigma(i)>\sigma(j)}} |x_i||x_j|;$$ and $\Unsh(j,d)$ is the group of $j$-unshuffles, which are permutations $\sigma \in \mathfrak{S}_d$ which satisfy
\begin{equation}
    \sigma(1)<\dots<\sigma(j) \quad\text{and}\quad \sigma(j+1)<\dots<\sigma(d).
\end{equation}

Our setup here differs from more conventional definitions of $L_{\infty}$ structures, but the difference is merely cosmetic. 
For instance, if we shift the grading by $1$ and set
\begin{equation}\label{eq:LM-trans}
    \ell^d_{\mathrm{LM}}(x_1,\dots,x_d) = (-1)^{\sum_{i=1}^d (d-i)\abs{x_i}} \ell^d(x_1,\dots,x_d),
\end{equation}
then the operations $\ell^d_{\mathrm{LM}}$ satisfy the $L_\infty$ relations as defined in \cite{LadaMarkl}.

While $SC^*(X)$ carries an action filtration $\mathcal{A}_{>A} SC^*(X)$, which is respected by the differential $\ell^1$, the higher operations $\ell^d$ do not quite respect it. 
Rather, there is a sequence $(\delta_\nu)_{\nu = 1}^\infty$ of positive numbers tending to $0$, such that the $L_\infty$ operations respect the shifted action filtration $F_{>A}SC^*_\nu(X) := \mathcal{A}_{>A+\delta_\nu}SC^*_\nu(X)$ on the $L_\infty$ subalgebra
$$SC^*_\nu(X):= \bigoplus_{n \ge \nu} CF^*(X,H_n)[t],$$
for all $\nu$.

\begin{rmk}
The $L_\infty$ structure on symplectic cochains is constructed (as part of a larger structure) by Abouzaid--Groman--Varolgunes \cite{Abouzaid-Groman-Varolgunes}; it has also been constructed in a different way by Pomerleano--Seidel \cite{Pomerleano-Seidel}. 
The main advantages of our own construction over these ones are, respectively, that our construction is more explicit than that of Abouzaid--Groman--Varolgunes (who construct their algebraic structures on a much larger, but quasi-isomorphic, chain complex); and admits an action filtration, in contrast to that of Pomerleano--Seidel (who use a quadratic model for symplectic cochains). 
Both of these points are important for the purposes of our followup work \cite{ElAlami-Sheridan-2}. We remark that the action filtration is also important in Siegel's definition of higher symplectic capacities from $L_\infty$ structures in \cite{Siegel}.
\end{rmk}

Our second main construction is that of an $L_\infty$ homomorphism from $SC^*(X)$ to $CC^*(\matheu{W}(X))$, the DG Lie algebra (in particular, $L_\infty$ algebra) of Hochschild cochains on the wrapped Fukaya category $\matheu{W}(X)$, where the latter is constructed using a telescope model as in \cite{Abouzaid13}. 

\begin{rmk}
In \cite{Ritter-Smith}, Ritter and Smith construct the leading term $\CO^1: SC^*(X) \rightarrow \CC^*(\matheu{W})$, called the closed-open map, by counting isolated points in the moduli spaces  $\Mms^\disc_{1,k,\bp,\bw}(\bx,\by)$. Earlier versions of this map go back to Abouzaid in his work \cite{Abouzaid-geometric-criterion} on split-generation of the wrapped Fukaya category. 
The $L_\infty$ version of the closed--open map is also constructed by Pomerleano--Seidel in \cite{Pomerleano-Seidel}, using quadratic Hamiltonians rather than telescope complexes.   
\end{rmk}

This consists of maps
$$\CO^d:SC^*(X)^{\otimes d} \to CC^*(\matheu{W}(X))[2-2d]$$
which are graded-commutative in the same sense as before, and furthermore satisfy
\begin{multline*}
    \sum_{\substack{1 \le j \le d\\ \sigma \in \Unsh(j,d)}} (-1)^{\epsilon} \CO^{d-j+1}(\ell^j(x_{\sigma(1)},\ldots,x_{\sigma(j)}),x_{\sigma(j+1)},\ldots,x_{\sigma(d)}) +  \partial \CO^d(x_1,\ldots,x_d) +\\ \sum_{\substack{1 \le j \le d-1\\ \sigma \in \Unsh(j,d) \\ \sigma(1)<\sigma(j+1)}} (-1)^{\epsilon+\sum_{i=1}^j|x_{\sigma(i)}|} \left[ \CO^j(x_{\sigma(1)},\ldots,x_{\sigma(j)}),\CO^{d-j}(x_{\sigma(j+1)},\ldots,x_{\sigma(d)})\right] = 0,
\end{multline*}
where $\partial$ is the Hochschild differential and $[-,-]$ is the Gerstenhaber bracket. 
We explain in Section \ref{Orientations} how this is equivalent to the conventional definition of an $L_\infty$ homomorphism.

It is expected that the closed-open map is a quasi-isomorphism when $X$ is Weinstein. This expectation is a known theorem provided one assumes that the various versions of the wrapped Fukaya category in the literature are compatible. It can be deduced from the generation result of Ganatra-Pardon-Shende (see \cite[theorem 1.13]{GPS2}), which uses a localization model for $\matheu{W}$, combined with Ganatra's earlier work in \cite[Theorem 1.1]{Ganatra-thesis}, which uses a quadratic model for $\matheu{W}$.

When $\CO$ is a quasi-isomorphism, it induces a bijection between abstract deformations of the wrapped Fukaya category and Maurer--Cartan elements in symplectic cohomology. 
In contrast to the compact case, we do not expect the $L_\infty$ structure on symplectic cohomology to be formal or abelian, and we do not expect the space of Maurer--Cartan elements to be smooth. 
Instead, we expect the $L_\infty$ subalgebra of `constant loops' to be formal and abelian, and hence give rise to a smooth component of the moduli space of Maurer--Cartan elements with tangent space $H^2(X)$. 
This corresponds via $\CO$ to the space of `bulk deformations' of the wrapped Fukaya category. 

On the other hand, as we establish in follow-up work building on the present paper (see \cite{ElAlami-Sheridan-2}), an appropriate normal-crossings compactification of $X$ also induces a family of Maurer--Cartan elements (whose dimension is equal to the number of components of the compactifying divisor). This corresponds via $\CO$ to the family of deformations of the compact Fukaya category given by the `relative Fukaya category', cf. \cite{Seidel-deformations}, so we term them `compactifying deformations'. 
It is expected (see \cite[Remark 1.32]{Sheridan-versality}) that at least sometimes, the open symplectic manifold $X$ is mirror to a singular variety, the bulk deformations correspond under mirror symmetry to locally trivial deformations of the singular variety, and the compactifying deformations correspond to smoothings of the singular variety.

\subsection{Outline}

In Section \ref{sec:moduli} we introduce the domain moduli spaces which appear in our constructions. 
In Section \ref{sec:grading} we explain our conventions for gradings and signs in a general context. In Section \ref{sec:Floer} we construct the $L_\infty$ structure maps $\ell^d$. 
In Section \ref{sec:wrapped} we construct the wrapped Fukaya category and the $L_\infty$ homomorphism maps $\CO^d$. 
Finally Section \ref{Orientations} is devoted to checking the signs in all our formulae.

\textbf{Acknowledgments:} M.E.A. and N.S. are supported by an ERC Starting Grant (award number 850713 -- HMS). N.S. is also supported by a Royal Society University Research Fellowship, the Leverhulme Prize, and a Simons Investigator award (award number 929034).

\section{Deligne-Mumford type spaces}\label{sec:moduli}

\subsection{The $A_{\infty}$ operad}

Let $\mathbb{D}$ be the unit disc in the complex plane. We set $\text{Conf}_{1+k}(\partial \mathbb{D})$ for the space of $1+k$ distinct points $(\zeta_0,\dots,\zeta_k) \in (\partial \mathbb{D})^{1+k}$, and $\text{Conf}^{\ord}_{1+k}(\partial \mathbb{D})\subseteq \text{Conf}_{1+k}(\partial \mathbb{D})$ for the subset of configurations whose labelling agrees with their counter-clockwise ordering. The moduli space of smooth discs with $1+k$ boundary marked points is
\begin{equation}
    \Rms^\disc_{k} = \{ (\zeta_0,\dots,\zeta_k) \in \text{Conf}^{\ord}_{1+k}(\partial \mathbb{D})\} / \Aut(\mathbb{D}).
\end{equation}
In the stable range of $k\geq 2$, $\Rms^\disc_k$ is a smooth manifold of dimension $k-2$. Similarly, one defines the moduli space of smooth genus $0$ curves with $1+k$ marked points as
\begin{equation}
    \Rms^\sph_{1+k} = \{ (z_0,\dots,z_k) \in \text{Conf}_{1+k}(\mathbb{P}^1)  \}/\text{PSL}_2(\mathbb{C}).
\end{equation}

By thinking of the circle $\partial \mathbb{D}$ as the equator of $\mathbb{P}^1$, we obtain an embedding 
\begin{equation}\label{Stasheff-into-DM}
    \Rms^\disc_k\hookrightarrow \overline{\Rms}^\sph_{1+k}
\end{equation}
into the Deligne-Mumford compactification of $\Rms^\sph_{1+k}$. Stasheff's associahedra are the compactifications $\overline{\Rms}^\disc_k$ obtained by taking the closure of the image under the embedding \eqref{Stasheff-into-DM}. It turns out that they are all smooth manifolds with corners. Much like Deligne-Mumford spaces, they also admit stratifications modeled on trees which we now explain.

A \emph{$k$-leafed tree} is a finite, connected, directed, acyclic graph $T$ with $1+k$ semi-infinite edges, such that every vertex $v$ of $T$ has a distinguished incoming edge $e^{\text{in}}(v)$ (pointing towards $v$), and a non-empty set of outgoing edges (pointing away from $v$). There is a unique semi-infinite edge pointing towards the tree, called \emph{the root}. The others, called \emph{leaves}, are labeled using the ordered set $\{1,\dots,k\}$. Two such trees $T$ and $T'$ are the same if there is a directed graph isomorphism between them which preserves the labelling of the leaves. More conveniently, $T$ and $T'$ are the same if they have isotopic embeddings in $\mathbb{R}^3$.

When there exists a planar embedding of $T$ such that the leaf labels agree with their counter-clockwise ordering (starting from the root), we say that the $k$-leafed tree is \emph{ordered} (note that in this case, the isotopy class of the planar embedding is uniquely determined by the leaf labels). Let $\mathscr{T}(k)$ be the set of $k$-leafed trees and $\mathscr{T}^{\ord}(k)\subseteq \mathscr{T}(k)$ the subset of ordered trees.

The set of vertices of $T$ is denoted $V(T)$ and the set of internal edges is denoted $\iE(T)$. By Euler's theorem, $\abs{V(T)}=\abs{\iE(T)}+1$. The tree $T$ is said to be \emph{stable} if 
\begin{equation}
    k_v \defeq \deg(v) - 1 \geq 2\ \quad \text{for all}\ v\in V(T).
\end{equation}
We denote by $\mathscr{T}_{\st}(k)$ (resp. $\mathscr{T}_{\st}^{\ord}(k)$) the set of stable $k$-leafed (resp. ordered) trees. The Stasheff associahedra carry stratifications
\begin{equation}\label{stratification-associahedra}
    \overline{\Rms}^\disc_{k} = \bigcup\ \{\Rms^\disc_{T} \ | \ T\in \mathscr{T}_{\st}^{\ord}(k)\}
\end{equation}
by the smooth manifolds $\Rms^\disc_{T} = \prod_{v\in V(T)} \Rms^\disc_{k_v}$, see \cite[Lemma 9.2]{PL-theory} for a proof. We note the following dimension formula
\begin{equation}
    \dim \Rms^\disc_T = k - 2 - \abs{\iE(T)}.
\end{equation}

\subsection{The $L_{\infty}$ operad}

Aside from their Deligne-Mumford compactifications, the spaces $\Rms^\sph_{1+d}$ also admit another compactification $\underline{\Rms}^\sph_{1+d}$ following (the real version of) a recipe due to Fulton and Macpherson \cite{Fulton-Macpherson}. The space $\underline{\Rms}^\sph_{1+d}$ is the \emph{real} blow-up of $\overline{\Rms}^\sph_{1+d}$ along the normal crossings divisor $\overline{\Rms}^\sph_{1+d}\backslash \Rms^\sph_{1+d}$. In particular, it is a smooth manifold with corners, see \cite[\S 5]{Axelrod-Singer-II} for a similar blow-up construction. 

As noted in \cite[\S 3.2]{Kimura-Stasheff-Voronov}, $\underline{\Rms}^\sph_{1+d}$ is the moduli space of stable complex genus $0$ curves with $1+d$ marked points and decorated nodes. A \emph{node decoration} for $C \in \overline{\Rms}^\sph_{1+d}$ is the datum of an anti-linear isomorphism $\sigma : \mathbb{R}\mathbb{P}(T_{z_1}C_1) \cong \mathbb{R}\mathbb{P}(T_{z_2}C_2)$ whenever $C_1$ and $C_2$ are irreducible components of $C$ such that the points $z_1$ and $z_2$ are identified in $C$. Decorations eliminate the $S^1$-ambiguity which appears when attempting to smooth nodes.

Consider the following moduli space
\begin{equation}
    \overline{\Rms}^{\al}_d = \{ (C,z_0,\dots,z_d;\theta_0) \ | \ C \in \underline{\Rms}^\sph_{1+d} \ \text{and} \ \theta_0 \in \mathbb{R}\mathbb{P}(T_{z_0}C) \},
\end{equation}
where $z_0$ is the first marked point of $C$. This is again a smooth manifold with corners because it is an $S^1$-bundle over $\underline{\Rms}^\sph_{1+d}$. It also comes with a stratification analogous to \eqref{stratification-associahedra} which we now explain.

A \emph{framing} on a smooth genus $0$ curve $C \in \Rms^\sph_{1+d}$ is the choice of a tangent direction $\theta_i\in \mathbb{R}\mathbb{P}(T_{z_i}C)$ for each one of its marked points. The framing is said to be \emph{aligned} if for each $i = 1,\dots, d$, there is an isomorphism $\phi: C\rightarrow \mathbb{P}^1$ such that $\phi(z_0) = \infty$, $\phi(z_i) = 0$, and both $\phi_*(\theta_0)$ and $\phi_*(\theta_i)$ point along the positive real direction. Note that the moduli space of smooth genus $0$ curves with $1+d$ marked points and aligned framings is
\begin{equation}
    \Rms^{\al}_d = \text{Conf}_d (\mathbb{C})/\text{Aff}(\mathbb{C},\mathbb{R}_{>0}),
\end{equation}
where $\text{Aff}(\mathbb{C},\mathbb{R}_{>0}) = \{ z\mapsto az + b \ | \ a\in \mathbb{R}_{>0} \ \text{and} \ b\in \mathbb{C} \}$. This is for instance the point of view taken in \cite[\S 3.2]{Getzler-Jones} and in \cite[\S 3.3]{Kontsevich-operads-motives}.

A framing on a stable genus $0$ curve $C \in \overline{\Rms}^\sph_{1+d}$ is the choice of a tangent direction $\theta_z \in \mathbb{R}\mathbb{P}(T_zC_i)$ at each marked point or node $z$, for every component $C_i$ of $C$. 
Such a framing is said to be aligned if each component $C_i$ is aligned. 
A framing for $C$ uniquely determines decorations for its nodes: if $z_1 \in C_1$ is identified with $z_2 \in C_2$, then the node decoration is the unique anti-linear isomorphism $\sigma : \mathbb{R}\mathbb{P}(T_{z_1}C_1) \cong \mathbb{R}\mathbb{P}(T_{z_2}C_2)$ such that $\theta_{z_1} \mapsto \theta_{z_2}$.

The moduli spaces of stable genus $0$ curves with aligned framings are stratified by smooth manifolds
\begin{equation}
    \overline{\Rms}^{\al}_d = \bigcup\ \{ \Rms^{\al}_T \ | \ T \in \mathscr{T}_{\st}(d) \},
\end{equation}
where $\Rms^{\al}_T = \prod_{v \in V(T)} \Rms^{\al}_{d_v}$, and $d_v$ is the number of outgoing edges of the vertex $v$. An explicit set of charts around each stratum $\Rms^{\al}_T$ is described in \cite[\S 4.1]{Merkulov}. We note the following dimension formula
\begin{equation}
    \dim \Rms_T^{\al} = 2d - 3 - \abs{\iE(T)}.
\end{equation}

\subsection{The $\text{CO}_{\infty}$-operad}

Let $\Rms^\disc_{d,k}$ be the moduli space of smooth discs with $d$ interior marked points and $1+k$ boundary marked points  labeled according to their counter-clockwise ordering. In the stable range of $2d+k\geq 2$, this is a smooth manifold of dimension $2d+k-2$. In \cite[\S 5]{Kontsevich-deformation-quantization}, Kontsevich introduced a Fulton-Macpherson type compactification $\overline{\Rms}^\disc_{d,k}$ for his proof of the formality conjecture. The strata of this compactification are modeled on \emph{2-colored} trees $T\in \mathscr{T}^{\cl}(d,k)$. The coloring datum on a $(d+k)$-leafed $T$ is a partition of its edges $E(T) = E^{\text{dash}}(T) \sqcup E^{\text{solid}}(T)$ into solid and dashed edges, such that
\begin{itemize}
    \item[(1)] All edges flowing from a dashed edge are dashed too.
    \item[(2)] The sub-tree of solid edges is ordered (in particular, the solid leaves are labeled by the set $\{1,\dots,k\}$).
\end{itemize}

Two such trees $T$ and $T'$ are the same if there is an isomorphism $T \cong T'$ preserving the coloring structure and the labelling of the leaves. It may be convenient to picture a $2$-colored tree as a (solid) tree in the $xy$-plane, with (dashed) sprouts in the $zx$-plane, see Figure \ref{2-colored-tree}.

\begin{figure}
    \centering
    \begin{tikzpicture}[x=0.75pt,y=0.75pt,yscale=-1,xscale=1]
        
        \draw    (179,268) -- (270,269) ;
        \draw    (270,269) -- (337,305) ;
        \draw    (337,305) -- (376,343) ;
        \draw    (337,305) -- (399,289) ;
        \draw    (399,289) -- (482,296) ;
        \draw  [dash pattern={on 4.5pt off 4.5pt}]  (270,269) -- (269,218) ;
        \draw  [dash pattern={on 4.5pt off 4.5pt}]  (399,289) -- (398.5,263.5) ;
        \draw  [dash pattern={on 4.5pt off 4.5pt}]  (269,218) -- (281,174) ;
        \draw  [dash pattern={on 4.5pt off 4.5pt}]  (269,218) -- (243,184) ;
        \draw  [dash pattern={on 4.5pt off 4.5pt}]  (398.5,263.5) -- (413,242) ;
        \draw  [dash pattern={on 4.5pt off 4.5pt}]  (281,174) -- (296,119) ;
        \draw  [dash pattern={on 4.5pt off 4.5pt}]  (281,174) -- (264,122) ;
        \draw  [dash pattern={on 4.5pt off 4.5pt}]  (398.5,263.5) -- (380,241) ;
        \draw  [dash pattern={on 4.5pt off 4.5pt}]  (281,174) -- (318,141) ;
        \draw    (337,305) -- (367,262) ;
    \end{tikzpicture}
    \caption{$2$-colored tree without labels}
    \label{2-colored-tree}
\end{figure}
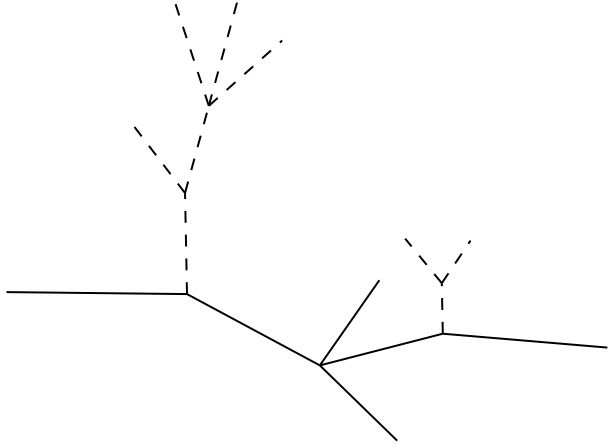

We define $\mathscr{T}_{\st}^{\cl}(d,k)$ to be the set of stable $2$-colored $(d+k)$-leafed trees. Note that the vertices of $T$ acquire colors $V(T) = V^{\text{dash}}(T) \sqcup V^{\text{solid}}(T)$ by declaring that $v\in V^{\text{dash}}(T)$ if and only if its incoming edge is dashed, $e^{\text{in}}(v)\in E^{\text{dash}}(T)$. Kontsevich's compactification is given by
\begin{equation}
    \overline{\Rms}^\disc_{d,k} = \bigcup\ \{ \Rms^\disc_T \ | \ T\in \mathscr{T}_{\st}^{\cl}(d,k) \}.
\end{equation}
Each stratum is a product of previously defined moduli spaces,
\begin{equation}
    \Rms^\disc_T = \prod_{v\in V^{\text{solid}}(T) } \Rms^\disc_{d_v,k_v} \times \prod_{v \in V^{\text{dash}}(T)} \Rms^{\al}_{d_v},
\end{equation}
where $d_v$ the number of outgoing dashed edges and $k_v$ is the number of outgoing solid edges of the vertex $v$. The compactification $\overline{\Rms}^\disc_{d,k}$ is a smooth manifold with corners, see \cite[\S 4.2]{Merkulov} for smooth charts around each stratum of this compactification. We note the following dimension formula
\begin{equation}
    \dim \Rms^\disc_T = 2d+k - 2 - \abs{\iE(T)}.
\end{equation}

\subsection{Flavours and weights} 

In \cite[\S 2]{open-string-analogue}, Abouzaid and Seidel explain how to enrich spaces such as $\overline{\Rms}^\disc_d$, $\overline{\Rms}^{\al}_d$, and $\overline{\Rms}^\disc_{d,k}$, with \emph{flavours} and weights. We briefly review how this works for the moduli spaces $\overline{\Rms}^{\al}_d$ of genus $0$ curves with aligned framings.

Let $\bp : F\rightarrow \{1,\dots,d\}$ be a map from some finite set $F$.
A $\bp$-flavour on a smooth framed curve $(C,z_0,\dots,z_d;\theta_0) \in \Rms^{\al}_d$ is a map $\psi: F\rightarrow \text{Isom}(C,\mathbb{P}^1)$ such that for each $f\in F$,
\begin{equation}
   \psi(f)(z_0) = \infty,\quad \psi(f)(z_{\bp(f)}) = 0,
\end{equation}
and $\psi(f)_*(\theta_0)$ points along the positive real direction. Note that in the terminology of \cite{open-string-analogue}, the map $\bp$ is called a flavour and $\psi$ is called a sprinkle.

The space of smooth $\bp$-flavoured framed curves is denoted $\Rms^{\al}_{d,\bp}$. It has the structure of a principal $\mathbb{R}_{>0}^F$-bundle over $\Rms^{\al}_{d}$. Indeed, for any $f\in F$, the possible choices of $\psi(f)$ differ from one another by a positive scaling of $\mathbb{P}^1$.  In particular, $\Rms^{\al}_{d,\bp}$ is a smooth manifold of dimension $2d+\abs{F}-3$.

The strata of the compactification $\overline{\Rms}^{\al}_{d,\bp}$ are modeled 
on pairs $(T,\bF)$ where $T\in \mathscr{T}(d)$ is a labeled $d$-leafed tree and $\bF = (F_{v})_{v\in V(T)}$ is
a partition of $F$ among the vertices of $T$ such that the following compatibility condition holds: 
\begin{itemize}
    \item[($\star$)] If $f\in F_v$, then $v$ belongs to the path from the root to the leaf $\bp(f)$.
\end{itemize}
Given such a partition, each vertex $v\in V(T)$ acquires a map
\begin{equation}
    \bp_v : F_v \rightarrow \{ \text{outgoing edges of } v\},
\end{equation}
where $\bp_v(f)$ is the unique outgoing edge on the path from $v$ to the leaf $\bp(f)$. The tree $T$ itself need not be stable, but the pair $(T,\mathbf{F})$ must be, i.e. $2d_v+\abs{F_v}\geq 3$ for all $v\in V(T)$. The compactification is given by the union 
\begin{equation}
    \overline{\Rms}^{\al}_{d,\bp} = \bigcup\ \{\Rms_{T,\bF}^{\al}\ | \ T\in \mathscr{T}(d) \ \text{and}\ (T,\bF)\ \text{stable} \}
\end{equation}
over the product spaces
\begin{equation}
 \Rms^{\al}_{T,\bF} = \prod_{v\in V(T)} \Rms^{\al}_{d_v,\bp_v}.
\end{equation}
Each stratum of this compactification has dimension given by
\begin{equation}
    \dim \Rms^{\al}_{T,\bF} = 2d+\abs{F}-3-\abs{\iE(T)}.
\end{equation}

Our spaces can be further enriched with \emph{weights}. These are collections of integers $(n_0,\dots,n_d)$ associated with the marked points of our Riemann surfaces such that
\begin{equation}\label{weight-equation}
    n_0 = n_1 + \dots + n_d + \abs{F}.
\end{equation}
The moduli space $\overline{\Rms}^{\al}_{d,\bp,\bn}$ is just another copy of $\overline{\Rms}^{\al}_{d,\bp}$. However, we will write it as
\begin{equation}
    \overline{\Rms}^{\al}_{d,\bp,\bn} = \bigcup\ \{\Rms_{T,\bF,\bn}^{\al}\ | \ T\in \mathscr{T}(d) \ \text{and}\ (T,\bF)\ \text{stable} \},
\end{equation}
and we write $\Rms_{T,\bF,\bn}^{\al} = \prod_{v\in V(T)} \Rms^{\al}_{\abs{v},\bp_v,\bn_v}$ to indicate that the weights $\bn$ uniquely determine weights $\bn_v$ for the edges adjacent to the vertex $v$ such that
\begin{itemize}
    \item[(i)] The root has weight $n_0$ and the $i^{\text{th}}$-leaf has weight $n_i$.
    \item[(ii)] At each vertex $v$, equation \eqref{weight-equation} holds.
\end{itemize}

To define $\Rms^\disc_{d,k,\bp}$ (and $\Rms^\disc_{k,\bp} \defeq \Rms^\disc_{0,k,\bp}$), we observe that there is an embedding $\Rms^\disc_{d,k} \hookrightarrow \Rms^{\text{al}}_{k+d}$ by `doubling' the disc, choosing the asymptotic marker at the $0$th boundary marked point to point perpendicularly into the disc, and choosing the unique aligned asymptotic markers at the remaining marked points. 
We define a $\bp$-flavour for an element of $\Rms^\disc_{d,k}$ to be a $\bp$-flavour for its image under this embedding. 
We may also define compactified moduli spaces
\begin{align}
    \overline{\Rms}^\disc_{k,\bp}   &= \bigcup\ \{\Rms^\disc_{T,\bF}\ | \ T\in \mathscr{T}^{\ord}(k) \ \text{and}\ (T,\bF)\ \text{stable} \},\\
    \overline{\Rms}^\disc_{d,k,\bp} &=  \bigcup\ \{\Rms^\disc_{T,\bF}\ | \ T\in \mathscr{T}^{\cl}(d,k) \ \text{and}\ (T,\bF)\ \text{stable} \},
\end{align}
and their enrichments with weights, $\overline{\Rms}^\disc_{k,\bp,\bw}$ and $\overline{\Rms}^\disc_{d,k,\bp,\bw}$. The spaces $ \overline{\Rms}^\disc_{k,\bp}$, $\overline{\Rms}^{\al}_{d,\bp}$, and $\overline{\Rms}^\disc_{d,k,\bp}$ are all compact smooth manifolds with corners. They each come with universal curves $\Univ_{k,\bp}^{\disc}$, $\Univ_{d,\bp}^{\al}$ and $\Univ_{d,k,\bp}^{\disc}$, respectively. For instance, the fiber of $\Univ_{d,\bp}^{\al}\rightarrow \overline{\Rms}^{\al}_{d,\bp}$ over $(C,z_0,\dots,z_d;\theta_0)$, is the (nodal) curve $C$ itself. For proofs, one can adapt the arguments in \cite[\S 6]{open-string-analogue} with little to no modification.

We note that we have an action of the symmetry group
\begin{equation}
    \Sym(\bp) = \{ \pi\in \mathfrak{S}(F) \ | \ \bp\circ \pi = \bp \}
\end{equation}
on $\Rms^{\disc}_{k,\bp}$ by $\pi\cdot(r,\psi) \mapsto (r,\psi\circ \pi)$. Similarly, the space $\Rms^{\al}_{d,\bp}$ carries an action 
\begin{equation}\label{group-action}
    (\sigma,\pi)\cdot(r,\psi)\mapsto (\sigma\cdot r, \psi\circ \pi)
\end{equation}
by the larger group
\begin{equation}
    \Sym(d,\bp) \defeq \{(\sigma,\pi) \in \mathfrak{S}_d \times \mathfrak{S}(F)\ |\ \bp(\pi(f)) = \sigma(\bp(f))\},
\end{equation}
where $\sigma$ acts by permuting the labels of the interior marked points.
Finally, the space $\Rms^\disc_{d,k,\bp}$ carries an action as in \eqref{group-action} by the symmetry group 
\begin{equation}
    \Sym(d,k,\bp) \defeq \{(\sigma,\pi) \in \Sym(k+d,\bp) \ | \ \sigma\vert_{\{1,\dots,k\}} = \text{id}\}.
\end{equation}

\subsection{Asymptotic ends}

Let $C$ be a smooth genus $0$ curve, possibly with boundary. A positive/negative cylindrical end at an interior point $z\in C$ is the choice of a holomorphic embedding
\begin{equation}
    \epsilon_{\pm}: Z^{\cl}_{\pm}\rightarrow C\backslash\{z\}
\end{equation}
 of a positive/negative semi-infinite cylinder $Z^{\cl}_{\pm} = \{ (s,t)\in \mathbb{R}\times S^1 \ | \ \pm s \geq 0 \}$
such that $\lim_{\pm s \rightarrow \infty} \epsilon_{\pm}(s,t) = z$.
We will always assume that, under the embedding
\begin{equation}
    Z^{\cl}_\pm \hookrightarrow \mathbb{P}^1,\quad (s,t)\mapsto e^{-2\pi(s+it)},
\end{equation}
the \emph{end} extends to a holomorphic map $\overline{\epsilon}_{\pm}:\mathbb{P}^1\rightarrow C$. 

Similarly, a positive/negative strip-like end at a boundary point $\zeta\in C$ is the choice of a holomorphic embedding
\begin{equation}
    \epsilon_{\pm}: Z^{\op}_{\pm}\rightarrow C\backslash\{\zeta\}
\end{equation}
 of a positive/negative semi-infinite strip $Z^{\op}_{\pm} = \{ (s,t)\in \mathbb{R}\times [0,1] \ | \ \pm s \geq 0 \}$
such that $\lim_{\pm s \rightarrow \infty} \epsilon_{\pm}(s,t) = \zeta$, which also extends to a holomorphic map $\overline{\epsilon}_{\pm}:\mathbb{D}\rightarrow C$ under then embedding
\begin{equation}
    Z^{\op}_\pm \hookrightarrow \mathbb{D},\quad (s,t)\mapsto \frac{e^{\pi(s+it)}-i}{e^{\pi(s+it)}+i}\overset{.}{}
\end{equation}

It is possible to glue pairs of curves $(C_{+},z_+)$ and $(C_{-},z_-)$ when $z_{\pm}$ are interior (resp. boundary) points equipped with positive/negative cylindrical (resp. strip-like) ends $\epsilon_{\pm}$. For each $\delta\in (0,1)$ (called the gluing parameter), the glued curve is
\begin{equation}\label{gluing}
    C_{+}\#_\delta C_{-} = C_{+} \backslash \epsilon_{+}(s\geq \sigma)\cup C_{-} \backslash \epsilon_{-} (s\leq -\sigma)/ \sim,
\end{equation}
where $\sigma = -\log(\delta)$, and the gluing identification is $\epsilon_{+}(s,t)\sim \epsilon_{-}(s-\sigma,t)$ on the annulus $s\in (0,\sigma)$. 

\begin{lemma}\label{gluing framings}
    Suppose that $z \in C_{+}\ (\text{or}\ C_-)$ is equipped with an end $\epsilon$ and that $\delta$ is sufficently small. Then $z$ inherits an end in the glued curve  $C_{+}\#_\delta C_{-}$.
\end{lemma}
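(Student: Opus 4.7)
The plan is to inherit the end at $z$ by truncating $\epsilon$ to a smaller sub-cylinder (or sub-strip) that lies inside a neighborhood of $z$ disjoint from the region removed during gluing, and then reparametrizing. The argument is identical whether $z$ is interior or boundary, whether the end is positive or negative, and whether $z \in C_+$ or $z \in C_-$; below I treat the case of a positive cylindrical end at $z \in C_+$.

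First, since $z$ and $z_+$ are distinct marked points of $C_+$, they admit disjoint open neighborhoods $U \ni z$ and $V \ni z_+$. Under the embedding $(s,t) \mapsto e^{-2\pi(s+it)}$, the half-cylinder $\{s \geq \sigma\}$ corresponds to the disc $\{|w| \leq e^{-2\pi\sigma}\}$ around $\infty$, which is mapped into any prescribed neighborhood of $z_+$ by the continuous extension $\overline{\epsilon}_+$ once the disc is small enough. Consequently, for $\sigma = -\log\delta$ large enough (equivalently, $\delta$ small enough) we have $\epsilon_+(\{s \geq \sigma\}) \subset V$, and in particular disjoint from $U$.

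Next, applying the same reasoning to the extension $\overline{\epsilon}$ of the end at $z$, we find $s_0$ large so that $\epsilon(\{s \geq s_0\}) \subset U$. Since $U$ is disjoint from the removed collar, the restriction $\epsilon|_{s \geq s_0}$ factors through $(C_+ \#_\delta C_-) \setminus \{z\}$. Reparametrizing $s' = s - s_0$ yields a holomorphic embedding $\epsilon' : Z^{\cl}_+ \to (C_+ \#_\delta C_-) \setminus \{z\}$ with $\epsilon'(s',t) \to z$ as $s' \to \infty$, and composing $\overline{\epsilon}$ with the holomorphic scaling $w \mapsto e^{-2\pi s_0} w$ of $\mathbb{P}^1$ provides the required extension $\overline{\epsilon}'$ of $\epsilon'$.

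I do not anticipate a genuine obstacle: the lemma is essentially the observation that distinct marked points can be locally separated from the gluing collar for small $\delta$, combined with the invariance of the extension condition under affine reparametrizations of the end coordinate.
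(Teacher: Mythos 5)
Your proposal correctly observes that, for $\delta$ small, the embedded half-cylinder $\epsilon$ can be truncated and reparametrized so that it factors through the glued curve. But this only produces the embedding $\epsilon' : Z^{\cl}_+ \to (C_+ \#_\delta C_-)\setminus\{z\}$, not the additional datum that the paper's definition of an end requires: namely, that this embedding extend to a holomorphic map $\overline{\epsilon}' : \mathbb{P}^1 \to C_+ \#_\delta C_-$. Your proposed extension, $\overline{\epsilon} \circ (w \mapsto e^{-2\pi s_0} w)$, is a map $\mathbb{P}^1 \to C_+$, not a map into the glued curve. Since $C_+$ is a genus-$0$ curve, $\overline{\epsilon}$ is an isomorphism onto all of $C_+$; in particular its image includes the annular region $\overline{\epsilon}_+(\{|w| < \delta\})$ that is discarded in the gluing, so the composite does not factor through the tautological partial map $C_+ \dashrightarrow C_+ \#_\delta C_-$, no matter how large you take $s_0$.

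This is precisely the nontrivial content of the lemma and what the paper's explicit formulas for $\widehat{\psi}_\pm$ are doing: they show that a parametrization $\psi_+ : \mathbb{P}^1 \to C_+$ extends to a parametrization $\widehat{\psi}_+ : \mathbb{P}^1 \to C_+ \#_\delta C_-$ by using $\psi_+$ on the locus where its image lies in the surviving part of $C_+$, and switching over to $C_-$ via the gluing identification $\overline{\epsilon}_+(w) \sim \overline{\epsilon}_-(w/\delta)$ near the node. Your ``local separation'' observation handles the half-cylinder part of the end, but does not address its required extension across the node, so the proof is incomplete. To fix it, replace the appeal to rescaling with the case-by-case extension formula from the paper (or an equivalent argument showing that any automorphism of one component of a genus-$0$ nodal curve, composed with a node decoration, yields an isomorphism of the glued smoothing).
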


\begin{proof}
    We explain the argument in the case where $z$ is an interior point. When $\delta$ is sufficently small, $z$ survives in the glued curve, which we now think of as
    \begin{equation}\label{gluing2}
        C_{+}\#_\delta C_{-} = C_{+} \backslash \overline{\epsilon}_{+}(\abs{z}\leq \delta)\cup C_{-} \backslash \overline{\epsilon}_{-} (\abs{z}\geq 1/\delta)/ \sim,
    \end{equation}
    where the gluing operation is $\overline{\epsilon}_+(z) = \overline{\epsilon}_{-}(z/\delta)$ on the region $\delta\leq\abs{z}\leq 1$.
    The key idea is that any parametrization $\psi_{\pm}:\mathbb{P}^1\rightarrow C_{\pm}$ can be extended to all of $C_{+}\#_\delta C_{-}$. For $\psi_+$, the extension is
    \begin{equation}
        \widehat{\psi}_+(z) =
        \begin{cases}
           \psi_+(z) &\quad\text{if }  \abs{\overline{\epsilon}_{+}^{-1}\psi_+(z)}\geq \delta\\
           \overline{\epsilon}_{-}\left(1/\delta\cdot\overline{\epsilon}_{+}^{-1}\psi_+(z)\right) &\quad\text{if }  \abs{\overline{\epsilon}_{+}^{-1}\psi_+(z)}\leq 1.\\
        \end{cases}
    \end{equation}
    Likewise, the extension for $\psi_{-}$ is
    \begin{equation}
        \widehat{\psi}_{-}(z) =
        \begin{cases}
           \psi_{-}(z) &\quad\text{if }  \abs{\overline{\epsilon}_{-}^{-1}\psi_-(z)}\leq 1/\delta\\
           \overline{\epsilon}_{+}\left(\delta\cdot\overline{\epsilon}_{-}^{-1}\psi_{-}(z)\right) &\quad\text{if }  \abs{\overline{\epsilon}_{-}^{-1}\psi_-(z)}\geq 1.
        \end{cases}
    \end{equation}
\end{proof}

When a smooth genus $0$ curve $C$ (possibly with boundary) comes with marked points $(z_0,\dots,z_d)$, we make choices of a negative asymptotic end $\epsilon_0$ at $z_0$ (called the \emph{output}), and positive asymptotic ends $(\epsilon_i)_{i=1}^d$ for each of the other marked points (called \emph{inputs}). Furthermore, if $(C,z_0,\dots,z_d;\theta_0)\in \Rms^{\al}_d$ is closed and equipped with a framing $\theta_0\in \mathbb{R}\mathbb{P}(T_{z_0} C)$, we also require that $\overline{\epsilon}_i(\infty) = z_0$ and that $(\overline{\epsilon}_i)_*^{-1}(\theta_0)$ points along the positive real direction for $i=0,\dots,d$. This requirement cuts down the freedom in choosing $\epsilon_i$ to a contractible choice: $\mathbb{R}_{>0}\ltimes\mathbb{C}$ for $\overline{\epsilon}_0$ and $\mathbb{R}_{>0}$ for each of $(\overline{\epsilon}_i)_{i=1}^d$. 

Gluing in the presence of flavours and sprinkles is trickier. For example, let $(C,z_0,\dots,z_d;\theta_0)\in \Rms^{\al}_d $ be a genus $0$ curve equipped with a $\bp$-flavour $\psi$. The pre-image $\ell_{z_{\bp(f)}}\defeq \psi(f)^{-1}(\mathbb{R}_{>0})$ is a line on $C$ joining $z_0$ to $z_{\bp(f)}$. This line is independent of the choice of $\psi$; the latter is equivalent to the choice of a point $\psi(f)^{-1}(1)\in \ell_{z_{\bp(f)}}$, which is called a \emph{sprinkle}. The collection of lines $\ell_{z_i}\subseteq C$ varies continuously across the moduli space $\Rms^{\al}_d$, but it is not compatible with the gluing operation. We briefly recall an approach due to Abouzaid and Seidel in \cite[\S 2.4]{open-string-analogue} which resolves this issue.
\begin{definition}
   A \emph{stick} for an input $z_i$ (where $i\in \{1,\dots,d\}$) is an embedding $\tau_i:\mathbb{R} \rightarrow C\backslash\{z_0,z_i\}$ 
   of the form
   \begin{equation}
    \tau_i(s) = \epsilon_i(r_se^{i\alpha(r_s)}),
   \end{equation} 
   where $r_s = e^{-s}$ and $\alpha:\mathbb{R}_{>0}\rightarrow \mathbb{R}$ satisfies
   \begin{equation}
    \begin{cases}
        & \alpha(r) = 0 \quad\text{if } r\ll 1,\\
        & a\sin(\alpha(r)) + \text{Im}(b)/r = 0  \quad\text{if } r\gg 1,\\
        & \lim \alpha(r) = 0 \quad\text{as } r\rightarrow \infty,
    \end{cases}
   \end{equation}
where $(a,b)\in \mathbb{R}_{>0}\ltimes\mathbb{C}$ is the unique pair such that $\epsilon_0^{-1}\epsilon_i (z) = az+b$.
\end{definition}
The first two constraints on $\tau_i$ ensure that the image $Q_i = \text{Im}(\tau_i)$ is a line which agrees with $\overline{\epsilon}_0(\mathbb{R}_{>0})$ near $z_0$ and with $\overline{\epsilon}_i(\mathbb{R}_{>0})$ near $z_i$. The last constraint $\lim \alpha(r) = 0$ ensures that the set of sticks for an input $z_i$ is contractible. Therefore, one can inductively construct a consistent universal choice of sticks $(\tau_i)_{i=1}^d$ across the moduli spaces $\Rms^{\al}_d$ by gluing. Given such a choice, we identify the choice of a flavour $\psi(f)$ with the the choice of a point on the line $Q_i = \text{Im}(\tau_{\bp(f)})$ given by the intersection $Q_i \cap \psi(f)(S^1)$. This setup makes it straightforward to glue flavours and sprinkles.

\begin{lemma}
 There is a consistent universal choice of cylindrical ends for all the moduli spaces $\Rms^{\al}_{d,\bp}$ which is also $\Sym(d,\bp)$-invariant.   
\end{lemma}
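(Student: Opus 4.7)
The proof follows the standard inductive construction of universal choices \`a la Seidel, with the added wrinkle of equivariance under the finite group $\Sym(d,\bp)$. The induction is on the complexity $\mu(d,\bp) \defeq 2d + \abs{F} - 3 = \dim \Rms^{\al}_{d,\bp}$, with the inductive hypothesis being that for every $(d',\bp')$ with $\mu(d',\bp') < \mu(d,\bp)$, a consistent, $\Sym(d',\bp')$-equivariant choice of cylindrical ends on $\overline{\Rms}^{\al}_{d',\bp'}$ has already been made. The base case is the minimal stable value $\mu=0$, where the moduli space is a point and any choice compatible with the alignment conditions (and automatically symmetric) will do.

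For the inductive step, examine the codimension-one boundary strata of $\overline{\Rms}^{\al}_{d,\bp}$: these are the products $\Rms^{\al}_{T,\bF} = \prod_{v\in V(T)} \Rms^{\al}_{d_v,\bp_v}$ indexed by stable pairs $(T,\bF)$ with $\abs{\iE(T)}=1$, and each factor has strictly smaller complexity. The inductive hypothesis supplies cylindrical ends on each factor, and the gluing construction \eqref{gluing2} combined with Lemma \ref{gluing framings} transports these to ends on curves lying in a collar neighbourhood parameterized by the gluing length $\delta$. Consistency where collars overlap at higher-codimension corners is automatic: a corner labeled by a tree with $k$ internal edges lies in the closure of all codimension-one strata obtained by resolving any one of those edges, and the ends chosen on each such codimension-one stratum agree with those at the deeper corner by the inductive hypothesis applied two steps down. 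The boundary-determined ends are then extended into the interior of $\overline{\Rms}^{\al}_{d,\bp}$ using the crucial fact noted in the paper: the space of valid ends at a marked point (compatible with the aligned framing) is an affine space ($\mathbb{R}_{>0}\ltimes \mathbb{C}$ for $\overline{\epsilon}_0$ and $\mathbb{R}_{>0}$ for each $\overline{\epsilon}_i$), hence contractible, so one extends by a partition of unity argument on the space of parameters. The gluing of flavours and sprinkles is compatible with this because the inductive construction of sticks $\tau_i$ already recorded in the text identifies flavours with points on the sticks, and these are preserved by the gluing.

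The main obstacle is arranging $\Sym(d,\bp)$-equivariance in tandem with consistency. The group acts on $\overline{\Rms}^{\al}_{d,\bp}$ permuting both interior marked points and the set $F$ of flavour indices, and in particular permutes both the set of stable trees $(T,\bF)$ indexing boundary strata and the individual strata themselves. To proceed equivariantly, pick representatives for the $\Sym(d,\bp)$-orbits on the set of $(T,\bF)$, make the collar extension there using the inductive choices (which were equivariant on the factors, so the result is independent of how the representative is chosen up to the isotropy action), and propagate by the group action to all other boundary strata in the orbit. The extension into the interior is then performed on a fundamental domain for the action and pushed around by $\Sym(d,\bp)$; alternatively, because the parameter space of valid ends is a contractible affine space acted on continuously by the finite group, one averages the non-equivariant extension over the group orbit to produce an equivariant one. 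This averaging commutes with the boundary values (which were already equivariant by construction), so the final choice is simultaneously consistent, universal, and $\Sym(d,\bp)$-invariant, completing the induction.
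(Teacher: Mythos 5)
Your proposal follows essentially the same inductive scheme as the paper's own proof: induct on $\dim\Rms^{\al}_{d,\bp}$, use the inductive hypothesis plus gluing to fix the choice near the boundary of $\overline{\Rms}^{\al}_{d,\bp}$, note that the space of admissible ends at each marked point is contractible, and then extend into the interior compatibly with the $\Sym(d,\bp)$-action. Where you diverge is in exactly how that last equivariant extension is made. The paper's argument hinges on the observation that $\Sym(d,\bp)$ acts \emph{freely} on the open stratum $\overline{\Rms}^{\al}_{d,\bp}\setminus\partial\overline{\Rms}^{\al}_{d,\bp}$, so that one can pass to the quotient manifold $\overline{\Rms}^{\al}_{d,\bp}/\Sym(d,\bp)$, extend a section of a bundle with contractible fibres there, and pull back. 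Your ``extend on a fundamental domain and push around by the group'' formulation is really this same argument, but it tacitly requires the freeness that you never state; without it the pushing-around is not well-defined along the fixed locus. Your alternative of averaging the choice over the group orbit is a legitimate substitute (indeed it is what the paper does in Corollary \ref{Hamiltonian perturbations} for Hamiltonian perturbation data), but one must then say why the relevant parameter space, which is a principal bundle with fibre $(\R_{>0}\ltimes\C)\times\R_{>0}^d$ rather than a vector space, carries a $\Sym(d,\bp)$-invariant convex structure so that the average is meaningful; that is easy but not automatic. Finally, you should be slightly more careful with the base of the induction: besides the zero-dimensional case $d=1$, $\abs F=1$, the paper also singles out $d=2$, $F=\emptyset$, which is one-dimensional but has \emph{no} stable codimension-one boundary strata, so the inductive step supplies nothing near the boundary and the choice there must be made directly (e.g.\ using the free $\Sym(2)$-action on $\Rms^{\al}_2\cong S^1$).
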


\begin{proof}
    The proof follows the inductive argument laid out in \cite[(9g)]{PL-theory}. First, observe that choosing cylindrical ends $\epsilon_i$ on $\Rms^{\al}_{d,\bp}$ is equivalent to choosing a section of a $\Sym(d,\bp)$-equivariant bundle $\mathscr{E}_i\rightarrow \Rms^{\al}_{d,\bp}$. The fiber of this bundle is diffeomorphic to $\mathbb{R}_{>0}\times \mathbb{C}$, which is contractible. Moreover, gluing $\Sym(d_v,\bp_v)$-invariant ends on $(\Rms^{\al}_{d_v,\bp_v})_{v\in V(T)}$ produces $\Sym(d,\bp)$-invariant ends on $\overline{\Rms}^{\al}_{d,\bp}$ on an open neighborhood of $\partial \overline{\Rms}^{\al}_{d,\bp}$. Therefore, gluing produces a choice ends on the quotient space $\overline{\Rms}^{\al}_{d,\bp}/\Sym(d,\bp)$ near the quotient of the boundary. Since the action of $\Sym(d,\bp)$ on the interior $\overline{\Rms}^{\al}_{d,\bp}\backslash \partial \overline{\Rms}^{\al}_{d,\bp}$ is free, this choice can be extended to all of $\overline{\Rms}^{\al}_{d,\bp}/\Sym(d,\bp)$.
    
    Finally, to start the inductive process, we need to chose cylindrical ends on the following building blocks.
    \begin{itemize}
        \item[-] When $d=1$, $F = {1}$: In this case $\overline{\Rms}^{\al}_{1,\bp}$ is a point, and we can choose any pair of admissible ends $\epsilon_0,\epsilon_1$.
        
        \item[-] When $d=2$ and $F = \emptyset$: The action of $\Sym(2)$ on $\Rms^{\al}_{2}\cong S^1$ is free so that we can choose cylindrical ends on the quotient $\Rms^{\al}_{2}/\Sym(2)$ and pull them back to $\Sym(2)$-invariant cylindrical ends on $\Rms^{\al}_{2}$.
    \end{itemize}
    
\end{proof}

The constructions of $\Rms^\disc_{k,\bp}$ and $\Rms^\disc_{d,k,\bp}$ and their compactifications are similar, see \cite[\S 2.4]{open-string-analogue} for more details.

\section{Grading}\label{sec:grading}

\subsection{The Grading datum}\label{subsection:grading datum}
Let $(X^{2n},\omega)$ be a symplectic manifold. The Lagrangian Grassmannian is the fibre bundle $\gra(X) \to X$, whose fiber $\gra_p(X)$ at $p\in X$ is the manifold of linear Lagrangian subspaces of $(T_pX,\omega_p)$. It comes with a tautological vector bundle $\text{Taut} \to \gra(X)$ whose fibre over a point $\rho\in \gra(X)$ is the Lagrangian subspace $\rho$ itself.

Using the homotopy long exact sequence for fiber bundles, we have 
\begin{equation}\label{homotopy-long-exact-sequence}
\dotsi \to \pi_2(X) \xrightarrow{\delta} \pi_1(\gra_p(X)) \to \pi_1(\gra(X)) \to \pi_1(X) \to 0.
\end{equation}
The Maslov class is an isomorphism $\mu: \pi_1(\gra_p (X)) \to \mathbb{Z}$ which satisfies $\mu \circ \delta = 2 c_1(X)$, we refer to \cite[Chapter 2]{mcduff-salamon-introtosymptopology} for its construction.

If we abelianize \eqref{homotopy-long-exact-sequence}, we obtain an exact sequence
\begin{equation}\label{e:ZtoGX0}
\mathbb{Z} \to H_1(\gra(X)) \to H_1(X) \to 0.
\end{equation}
\begin{definition}\label{grading datum}
 A grading datum is a pair of morphisms $\mathbb{G} = \{\mathbb{Z} \to Y \to \mathbb{Z}_2\}$ whose composition is reduction mod $2$.   
\end{definition}
Throughout our work, we use a grading datum with $Y = H_1(\gra(X))$. The map $\mathbb{Z} \to H_1(\gra (X))$ is the one appearing in \eqref{e:ZtoGX0}, and the map $H_1(\gra(X)) \to \mathbb{Z}_2$ is the pairing with $w_1(\text{Taut})$.  

A $\mathbb{G}$-graded module is the same as a $Y$-graded module. If $V$ is a $\mathbb{G}$-graded module, we say that an element $v\in V$ has degree $d\in \mathbb{Z}$ if the $Y$-degree of $v$ is the image of $d$ under the map $\mathbb{Z}\rightarrow Y$. 
A $\mathbb{G}$-graded module becomes a $\mathbb{Z}_2$-graded module via the map $Y \to \mathbb{Z}_2$. 
In particular, we have the usual symmetric monoidal structure on the category of $\mathbb{G}$-graded modules:
\begin{equation}
M_1 \otimes M_2 \cong M_2 \otimes M_1: \quad m_1\otimes m_2 \mapsto (-1)^{\deg(m_1)\deg(m_2)} m_2\otimes m_1.	
\end{equation}

A $\mathbb{G}$-graded $\Z_2$-torsor $(\{a,b\},y)$ is a $\Z_2$-torsor $\{a,b\}$ together with an element $y \in Y$.\footnote{Note that a $\Z_2$-torsor is the same thing as a set with $2$ elements!} 
We can tensor $\G$-graded $\Z_2$-torsors together (adding the elements of $Y$), and we have a symmetric monoidal structure as above.  
A $\mathbb{G}$-graded $\Z_2$-torsor determines a free $\mathbb{G}$-graded $\Z$-module of rank one called the normalization, $|(\{a,b\},y)| := \langle a,b\rangle/(a+b)[-y]$ (concentrated in degree $y$). 
The functor $|-|$ is symmetric monoidal.

A $\mathbb{G}$-graded line is a $\mathbb{G}$-graded one-dimensional real vector space. 
An isomorphism of graded lines is an equivalence class of isomorphisms of graded real vector spaces, where two isomorphisms are equivalent if they differ by a positive scaling. 
The groupoid of $\mathbb{G}$-graded lines is equivalent to the groupoid of $\mathbb{G}$-graded $\Z_2$-torsors. 
In one direction the equivalence sends a torsor to its $\R$-normalization; in the other it sends a line to the torsor whose elements are the two orientations of the line. 
Thus we will freely speak of tensoring a line with a $\Z_2$-torsor, taking the normalization of a line, etc.

\subsection{Grading in Lagrangian Floer theory}

Let $\Sigma$ be a compact oriented surface with boundary. 
Recall the notion of the \emph{boundary Maslov index} $\mu(E, F) \in \mathbb{Z}$ from \cite[\S C.3]{mcduff-salamon-Jholomorphiccurves}
where $E \to \Sigma$ is a symplectic bundle and $F \subset E_{|\partial \Sigma}$ is a Lagrangian subbundle. 
Now consider a pair $(u, \rho)$ where
$u: \Sigma \to X$ is smooth and $\rho: \partial\Sigma \to \gra(X)$ is a lift of $u_{|\partial \Sigma}$.
Note that
$$
	[\rho] \in \ker \Big(H_1(\gra (X)) \to H_1(X)\Big).
$$ 
\begin{lemma}\label{l:IndexToLoop}(Lemma 3.3 of \cite{Sheridan-CY})
 The map $\rho$ defines a Lagrangian subbundle $F$ in the boundary of the symplectic bundle $E = u^*TX$
	and  
	$$
		\mu(E, F) \mapsto [\rho]
	$$
	in the right-exact sequence \eqref{e:ZtoGX0}.   
\end{lemma}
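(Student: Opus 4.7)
The plan is to reduce the statement to a Künneth-type computation after choosing a symplectic trivialization of $u^*TX$. Since $\Sigma$ is a compact oriented surface with nonempty boundary, it has the homotopy type of a wedge of circles; combined with the connectedness of the symplectic group, this means $u^*TX$ admits a symplectic trivialization $\Phi: u^*TX \overset{\sim}{\to} \Sigma \times \mathbb{R}^{2n}$. Under $\Phi$, the Lagrangian subbundle $F = \rho^*\mathrm{Taut}$ on the boundary corresponds to a map $\tilde\rho: \partial\Sigma \to \Lambda(n)$ into the Lagrangian Grassmannian of $\mathbb{R}^{2n}$, and by the definition of the boundary Maslov index recalled in \cite[\S C.3]{mcduff-salamon-Jholomorphiccurves}, the integer $\mu(E, F)$ is precisely $\tilde\rho_*[\partial\Sigma] \in H_1(\Lambda(n)) \cong \mathbb{Z}$.

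Next, $\Phi$ induces a bundle map $\Phi^*: \Sigma \times \Lambda(n) \to \gra(X)$ covering $u: \Sigma \to X$, and the original lift factors as $\rho = \Phi^* \circ (\mathrm{id}, \tilde\rho)$. By Künneth,
$$H_1(\Sigma \times \Lambda(n)) = H_1(\Sigma) \oplus H_1(\Lambda(n)),$$
so $(\mathrm{id}, \tilde\rho)_*[\partial\Sigma]$ decomposes as $([\partial\Sigma]_\Sigma,\, \tilde\rho_*[\partial\Sigma])$. The first coordinate vanishes because $\partial\Sigma$ bounds $\Sigma$, so pushing forward by $\Phi^*$ reduces the whole problem to evaluating $\Phi^*_*$ on the $\mu(E,F)$-fold multiple of a generator of $H_1(\Lambda(n))$.

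The last step, which I expect to be the point requiring the most care, is identifying the induced map $\Phi^*_*: H_1(\Lambda(n)) \to H_1(\gra(X))$ (for $\Lambda(n)$ sitting as a fiber over some chosen $x \in \Sigma$) with the map $\mathbb{Z} \to H_1(\gra(X))$ appearing in \eqref{e:ZtoGX0}. By construction, the composition $\Lambda(n) \hookrightarrow \Sigma \times \Lambda(n) \xrightarrow{\Phi^*} \gra(X)$ is the diffeomorphism $\Lambda(n) \cong \gra_{u(x)}(X)$ dual to the linear symplectic isomorphism $\Phi_x: (u^*TX)_x \cong \mathbb{R}^{2n}$, and the path-connectedness of $\gra(X)$ kills the base-point ambiguity arising from the choice of $x$. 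Naturality of the Maslov class under symplectic linear isomorphisms \cite[Chapter 2]{mcduff-salamon-introtosymptopology} then guarantees that the generator of $H_1(\Lambda(n))$ is sent to the image of the Maslov generator of $\pi_1(\gra_{u(x)}(X))$ in $H_1(\gra(X))$, which by the very construction of \eqref{e:ZtoGX0} is the image of $1 \in \mathbb{Z}$.

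Putting the three steps together gives $[\rho] = \Phi^*_*\bigl(0 \oplus \mu(E,F)\bigr) = \mu(E,F) \mapsto [\rho]$ in \eqref{e:ZtoGX0}, as required. The routine bookkeeping is the Künneth decomposition; the only genuine input is the naturality of the Maslov class under symplectic linear maps, which is what makes the trivialization-dependent identification compatible with the abstract map $\mathbb{Z} \to H_1(\gra(X))$.
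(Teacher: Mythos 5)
The paper does not actually prove Lemma \ref{l:IndexToLoop}; it cites it to Lemma 3.3 of \cite{Sheridan-CY} and moves on, so there is no in-paper argument to compare against. That said, your proof is correct and, as far as I can tell, reproduces the standard argument for this kind of statement. The trivialization step is fine ($\Sigma$ with nonempty boundary retracts to a wedge of circles, and $\mathrm{Sp}(2n,\R)$ is connected, so $u^*TX$ is symplectically trivial); the identification $\mu(E,F)=\tilde\rho_*[\partial\Sigma]$ is exactly the characterization of the boundary Maslov index for trivialized bundles in \cite[\S C.3]{mcduff-salamon-Jholomorphiccurves}; the Künneth decomposition of $H_1(\Sigma\times\Lambda(n))$ together with $\iota_*[\partial\Sigma]=0$ (long exact sequence of the pair $(\Sigma,\partial\Sigma)$) isolates the fiber contribution; and naturality of the Maslov class under linear symplectomorphisms, together with the Hurewicz identification $\pi_1(\gra(X))^{\mathrm{ab}}\cong H_1(\gra(X))$, matches the fiber generator with the image of $1$ under the map $\Z\to H_1(\gra(X))$ of \eqref{e:ZtoGX0}. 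Two cosmetic points only: write the map as $(\iota,\tilde\rho):\partial\Sigma\to\Sigma\times\Lambda(n)$ rather than $(\mathrm{id},\tilde\rho)$, since $\partial\Sigma$ is a proper subset; and the concluding line ``$[\rho]=\Phi^*_*(0\oplus\mu(E,F))=\mu(E,F)\mapsto[\rho]$'' conflates an equation with a map --- what you mean is that $[\rho]$ equals the image of $\mu(E,F)$ under $\Z\to H_1(\gra(X))$, which is indeed the statement of the lemma.
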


Recall that an \emph{anchored} Lagrangian brane is a Lagrangian submanifold $L\subseteq X$ which is equipped with a Pin structure and a lift $L^\#\subseteq \widetilde{\gra}(X)$, where $\widetilde{\gra}(X)\rightarrow \gra(X)$ is the covering map corresponding to the commutator subgroup of $\pi_1(\gra(X))$. In particular, $Y = H_1(\gra(X))$ acts on $\widetilde{\gra}(X)$ by deck transformations, and this action is transitive on the fibers of the covering map. 

Let $L_0$ and $L_1$ be anchored Lagrangian branes, and let $y:[0,1]\rightarrow X$ be a non-degenerate Hamiltonian chord from $L_0$ to $L_1$. Let $g \in \ker(Y\to \mathbb{Z}_2)$ be a class such that $y$ lifts to a path $y^\#$ from $L_0^\#$ to $g\cdot L_1^\#$. 
We can associate with $y^\#$ a Cauchy-Riemann operator $D_{y^\#}$ on the upper-half plane; let $k$ be its index.  
Then, the degree of $y$ is defined to be $\deg(y) \defeq k-g$, and it is independent of the path $y^\#$. 
We define $\det(D_{y^\#})$ to be the determinant line of the Fredholm operator $D_{y^\#}$, placed in degree $\deg(y)$. 
We define $\mathrm{Pin}_{y^\#}$ to be the $\mathbb{G}$-graded $\Z_2$-torsor whose elements are the two isomorphism classes of Pin structures on the pullback of the tautological bundle by $y^\#$, equipped with identifications with the Pin structures on $L_0^\#$ and $L_1^\#$ at the ends; it is placed in degree $0$. 
We define the orientation line $o_{y^\#} \defeq \det(D_{y^\#}) \otimes \mathrm{Pin}_{y^\#}$. 

We claim that $o_{y^\#}$ is independent of the choice of $y^\#$ up to canonical isomorphism. 
Indeed, by a computation of first Stiefel--Whitney class, the principal $\Z_2$-bundle whose fibres over $y^\#$ is $o_{y^\#}$ is trivial; so the fibres in each connected component can be canonically identified. 

It remains to identify fibres $o_{y_1^\#} \cong o_{y_2^\#}$ whose Maslov indices differ by an even integer $2i$. 
This is done by a standard gluing argument, see e.g. \cite[Proposition 8.1.4]{FOOO} or \cite[Proposition 1.4.10]{Abouzaid13}. 
Observe that $D_{y^\#_2}$ is isomorphic to the gluing $D_{y^\#_1} \# D_{\mathbb{CP}^1,i}$ where $D_{\mathbb{CP}^1,i}$ is the Cauchy--Riemann operator associated to a complex vector bundle over $\mathbb{CP}^1$ with first Chern class $i$, which gets glued to the upper half plane at an interior node. 
The resulting isomorphism
$$\det(D_{y^\#_2}) \otimes \det(\C^n) \cong \det(D_{y^\#_1}) \otimes \det(D_{\mathbb{CP}^1,i}),
$$
together with the canonical complex orientations of $\det(\C^n)$ and $\det(D_{\mathbb{CP}^1,i})$, gives the desired isomorphism $\det(D_{y^\#_2}) \cong \det(D_{y^\#_1})$. 
Associativity of gluing shows that these identifications are compatible (i.e., the composition of isomorphisms $o_{y_1^\#} \cong o_{y_2^\#} \cong o_{y_3^\#}$ is the chosen isomorphism $o_{y_1^\#} \cong o_{y_3^\#}$.)

Thus, we may unambiguously write $o_y$ for this canonically identified family of lines $o_{y^\#}$.

\subsection{Canonical index}\label{subsec:canind}
We now describe a similar grading scheme for $1$-periodic Hamiltonian orbits. Consider a smooth path of symplectic matrices 
\begin{equation}\label{e:PathSympMat}
	\Phi : [0,1] \to Sp(\mathbb{R}^{2n}) \quad\mbox{such that}\quad \Phi(0) = \id \mbox{ and }\ker (\Phi(1) - \id) = 0.
\end{equation}
Up to homotopy with fixed endpoints, we may assume that
\begin{equation}
 A(t) \defeq (\partial_t \Phi(t)) \cdot \Phi(t)^{-1} \in sp(\mathbb{R}^{2n})	
\end{equation}
is $1$-periodic. Let $B: Z_- \to sp(\mathbb{R}^{2n})$ be a smooth map on the negative half-cylinder $Z_- = (-\infty, 0] \times S^1$ such that
\begin{equation}
 B(s, t) = A(t) \mbox{ for } s \ll 0 \quad\mbox{and}\quad B(s, t) = 0 \mbox{ near } s = 0.	
\end{equation}
For each smooth loop $F: S^1 \to \gra(\mathbb{R}^{2n})$ of linear Lagrangians, we can define a real linear Cauchy-Riemann operator
\begin{align}\label{e:ConOp}
	D_{\Phi,F}: W^{1,q}(Z_-, \mathbb{R}^{2n}, F) &\to L^q(\hom^{0,1}(TZ_-, \mathbb{R}^{2n}))\\ 
	D_{\Phi, F}(\zeta) &= (d\zeta - B\cdot\zeta \otimes dt)_{J_0}^{0,1},\notag
\end{align}
where $W^{1,q}(Z_-, \mathbb{R}^{2n},F) = \{\zeta \in W^{1,q}(Z_-, \mathbb{R}^{2n}) : \zeta(0,t) \in F(t)\}$ and $J_0$ is the standard complex structure on $\mathbb{R}^{2n}$. When $q > 2$, the operator $D_{\Phi, F}$ is Fredholm, see \cite[Appendix C]{mcduff-salamon-Jholomorphiccurves} for a proof.

Let $H: S^1 \times X \to \mathbb{R}$ be a Hamiltonian. With each pair $(x, \rho)$ consisting of a non-degenerate $1$-periodic orbit $x : S^1 \to X$ of $X_H$ and a lift $\rho : S^1 \to \gra(X)$ of $x$, we associate a real linear Cauchy--Riemann operator as before
\begin{equation}\label{e:CanonOp}
	D_{x, \rho}: W^{1,q}(Z_-, v_x^* TX,\rho) \to L^q(\hom^{0,1}(TZ_-, v_x^* TX)),
\end{equation}
where $v_x: Z_- \to X$ is given by $v_x(s,t) = x(t)$.
In a unitary trivialization $\Psi : x^* TX \cong S^1 \times \mathbb{R}^{2n}$, this operator has the form
$D_{\Phi, F}$ as in \eqref{e:ConOp} where $\Phi(t) = \Psi(t) d\phi_H^t \Psi(0)^{-1}$
and $F(t) = \Psi(t) \rho(t)$.  Hence, the operator $D_{x, \rho}$ is Fredholm and, when $x$ is fixed, its index depends only on the homotopy class of the Lagrangian lift $\rho$.
\begin{definition}\label{Canonicalops}
For each non-degenerate $1$-periodic Hamiltonian orbit $x$, let $[\rho_x] \in H_1(\gra(X))$ be the homology class of the (unique up to homotopy) lift $\rho_x: S^1 \to \gra(X)$ of $x$ for which $\mbox{index}(D_{x, \rho_x}) = 0$. 
Define the \emph{degree} of $x$ to be
\begin{equation}
	\deg(x) \defeq n - [\rho_x] \in H_1(\gra(X)),
\end{equation}
where $n$ represents the image of $n \in \mathbb{Z} \to H_1(\gra(X))$ in \eqref{e:ZtoGX0}.   
\end{definition}

There is an analogous operator $D^{\vee}_{x, \bar{\rho}_x}$ defined on the positive half-cylinder $Z_+ = [0, \infty) \times S^1$
with respect to the map $v^\vee_x: Z_+ \to X$, where $\bar{\rho}_x$ is $\rho_x$ with the reverse orientation.

\subsection{Index formulae}

Let $\Sigma = \mathbb{P}^1 \setminus \mathbf{z}$ be a genus $0$ open Riemann surface, where $\mathbf{z} = (z_0, \dots, z_d)$ are distinct points on $\mathbb{P}^1$. Fix a collection $(\epsilon_0, \dots, \epsilon_d)$ of cylindrical ends for $\Sigma$ such that $\epsilon_0$ is a negative end and $\epsilon_1, \dots, \epsilon_d$ are positive ends.

Suppose we have a domain-dependent Hamiltonian $K \in \Omega^1(\Sigma,C^{\infty}(X))$ and a domain-dependent $\omega$-compatible almost complex structure $J$ such that, on the cylindrical ends,
\begin{equation*}
 \epsilon_k^* K = H_k \otimes dt \quad\mbox{and}\quad \epsilon_k^* J = J_{k}   
\end{equation*}
are only time-dependent. Pick a non-degenerate $1$-periodic orbit $x_k$ for each Hamiltonian $H_k: S^1 \times X \to \mathbb{R}$ and consider solutions $u: \Sigma\to X$ of the perturbed pseudo-holomorphic curve equation
\begin{equation}\label{pseudoholomorphic-equation}
\begin{cases}
 (du - Y_K)_{J}^{0,1} &=\ 0 \\
 \lim\limits_{s\to -\infty} u(\epsilon_0(s,t)) &=\ x_0 \\
 \lim\limits_{s\to \infty} u(\epsilon_i(s,t)) &=\ x_i, \ \text{for} \ i=1,\dots,d. 
\end{cases} 
\end{equation}
Here, $Y_K \in \hom(T\Sigma, TX)$ maps $\xi\in T\Sigma$ to $X_{K(\xi)}$; the Hamiltonian vector field associated with $K(\xi)$.
Following standard pseudo-holomorphic curve theory, the linearization of equation \eqref{pseudoholomorphic-equation} at a solution $u$ is a real linear Cauchy-Riemann operator
\begin{equation}
 D_{u}: W^{1,q}(\Sigma, u^*TX) \to L^q(\hom^{0,1}(T\Sigma, u^* TX)).
\end{equation}
The operator $D_u$ is Fredholm and its index represents the virtual dimension of the moduli space of solutions to \eqref{pseudoholomorphic-equation}  near $u$.
\begin{lemma}\label{index lemma}
Under the map $\mathbb{Z} \to H_1(\gra(X))$ from \eqref{e:ZtoGX0},
	$$
		\emph{\text{index}}(D_u) \mapsto \deg(x_0) - \sum_{i=1}^d \deg(x_i).
	$$   
\end{lemma}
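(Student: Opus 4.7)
The plan is to truncate $\Sigma$ to a compact bordered surface with canonical Lagrangian boundary conditions, reducing the problem to the boundary Maslov-index formulation of Lemma \ref{l:IndexToLoop}. For $T \gg 0$, I would form the compact surface $\Sigma_T = \Sigma \setminus \bigsqcup_{k=0}^d \epsilon_k(\{\abs{s} > T\})$, of genus zero with $d+1$ boundary circles $\partial_k \Sigma_T$. Using the asymptotic convergence of $u$ to $x_k$ on each cylindrical end, the canonical Lagrangian lift $\rho_{x_k}$ extends (via a contractible choice) to a lift $\rho : \partial\Sigma_T \to \gra(X)$ of $u|_{\partial\Sigma_T}$, and hence to a Lagrangian subbundle $F \subset u^*TX|_{\partial\Sigma_T}$.

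Next I would apply the linear gluing theorem for Fredholm indices to the decomposition in which each cylindrical end of $\Sigma$ is recovered by regluing a half-cylinder onto the corresponding boundary of $\Sigma_T$ along $F$:
\begin{equation*}
    \ind(D_u) \;=\; \ind\bigl(D_{u|_{\Sigma_T}}\bigr) \,+\, \sum_{k=0}^d \ind\bigl(D_k^{\text{cap}}\bigr).
\end{equation*}
Here $D_0^{\text{cap}} = D_{x_0, \rho_{x_0}}$ on the negative half-cylinder, whose index vanishes by the definition of the canonical lift, and $D_i^{\text{cap}} = D^\vee_{x_i, \bar{\rho}_{x_i}}$ on the positive half-cylinder for $i \geq 1$. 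To see that the latter also vanishes, I would glue $D_{x_i, \rho_{x_i}}$ to $D^\vee_{x_i, \bar{\rho}_{x_i}}$ along their common boundary: the result is a Cauchy--Riemann operator on the full cylinder $\mathbb{R} \times S^1$ with equal non-degenerate asymptotics at both ends, whose Fredholm index vanishes, so additivity forces $\ind(D^\vee_{x_i, \bar{\rho}_{x_i}}) = 0$.

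Finally, for the truncated operator, the Riemann--Roch formula for real-linear Cauchy--Riemann operators with Lagrangian boundary conditions yields
\begin{equation*}
    \ind\bigl(D_{u|_{\Sigma_T}}\bigr) \;=\; n\,\chi(\Sigma_T) + \mu(u^*TX, F) \;=\; n(1-d) + \mu(u^*TX, F).
\end{equation*}
By Lemma \ref{l:IndexToLoop}, $\mu(u^*TX, F)$ maps under \eqref{e:ZtoGX0} to the homology class of $\rho$. A careful inspection of the boundary orientations on $\Sigma_T$ (the outward normal reverses the standard $S^1$-orientation at the negative end $k=0$ but preserves it at each positive end $k \geq 1$) identifies this class with $-[\rho_{x_0}] + \sum_{i=1}^d [\rho_{x_i}]$. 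Combining this with $\deg(x_k) = n - [\rho_{x_k}]$, the image of $\ind(D_u)$ in $H_1(\gra(X))$ is
\begin{equation*}
    n(1-d) \,+\, \Bigl(-[\rho_{x_0}] + \sum_{i=1}^d [\rho_{x_i}]\Bigr) \;=\; \deg(x_0) - \sum_{i=1}^d \deg(x_i),
\end{equation*}
as required. The main obstacle is the orientation accounting at the boundary circles, which is precisely what produces the sign asymmetry between the output $x_0$ and the inputs $x_i$ in the target formula; a secondary subtlety is that $u|_{\partial\Sigma_T}$ is only asymptotically close to the orbits $x_k$, so the canonical lifts must be transported to nearby lifts before defining $F$.
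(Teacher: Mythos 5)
Your proof is correct and takes essentially the same approach as the paper: reduce to Riemann--Roch on a compact genus-zero surface with $d+1$ boundary circles and index-zero half-cylinder caps, then convert the boundary Maslov index to $-[\rho_{x_0}]+\sum_{i=1}^d[\rho_{x_i}]$ via Lemma \ref{l:IndexToLoop}. The only difference is cosmetic --- you excise the caps from $\Sigma$ where the paper glues them on (so the paper works with $D^\vee_{x_0,\bar{\rho}_{x_0}}$ and $(D_{x_i,\rho_{x_i}})_{i\geq 1}$ rather than your dual choice), which introduces the small extra step of transporting $\rho_{x_k}$ to the nearby boundary loops $u|_{\partial\Sigma_T}$, a point you correctly flag.
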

\begin{proof}
By gluing the canonical operators (see Definition \ref{Canonicalops})
$D^\vee_{x_0,\rho_{x_0}}$ and $(D_{x_i,\rho_{x_i}})_{i\geq 1}$ to $D_u$
using the cylindrical ends, we obtain a new Fredholm operator
\begin{equation}
	D = D^\vee_{x_0,\bar{\rho}_{x_0}} \# D_{u} \# D_{x_1,\rho_{x_1}} \# \cdots \# D_{x_d,\rho_{x_d}}
\end{equation}
defined over a genus $0$ Riemann surface with boundary $S$. The surface $S$ is given by removing $d+1$ disjoint disks from $\mathbb{P}^1$.  Explicitly, $D$ is a Fredholm operator
\begin{equation}
	D: W^{1,q}(S, v^* TX,\boldsymbol\rho) \to L^q(\hom^{0,1}(TS, v^* TX)),
\end{equation}
where $v$ is the result of gluing $u$ to $v_{x_0}^\vee$ and $(v_{x_i})_{i\geq 1}$, and the Lagrangian boundary condition is determined by $\boldsymbol\rho = (\bar{\rho}_{x_0}, \rho_{x_1}, \dots, \rho_{x_d})$.
Using the gluing formula for Fredholm operators, we see that
$$
	\ind(D_u)= \ind(D),
$$
because the canonical operators all have index $0$. Therefore, using the Riemann-Roch theorem (see \cite[C.1.10]{mcduff-salamon-Jholomorphiccurves}) and Lemma \ref{l:IndexToLoop}, we have
\begin{align*}
	 \ind(D) &= n\, \chi(S) + \mu(v^* TX,\boldsymbol\rho) \\
	&\mapsto n (1 - d) + [\boldsymbol \rho] \\
	&= n (1 - d) - [\rho_{x_0}] + [\rho_{x_1}] + \dots + [\rho_{x_d}]\\ 
	&= \deg(x_0) - \deg(x_1) - \dotsi - \deg(x_d).
\end{align*}
\end{proof}
There is an analogous index formula when $\Sigma$ has boundary. Let $(L_i)_{i=1}^k$ be a collection of Lagrangians, and consider a disc $(\mathbb{D}, \zeta_0,\dots,\zeta_k,z_1,\dots,z_d) \in \Rms^\disc_{d,k}$ which comes with asymptotic ends for its marked points. Set $\Sigma = \mathbb{D}\backslash \{\zeta_0,\dots,\zeta_k,z_1,\dots,z_d\}$ and consider solutions $u:\Sigma \rightarrow X$ of the equation
\begin{equation}\label{popsicle equation'}
    \begin{cases}
        & (du-Y_r)_{J_r}^{0,1} = 0,\\
        & u((\partial\Sigma_r)_i) \subseteq L_i\\
		& \lim_{s\rightarrow \pm\infty} \epsilon_{\zeta_i}^*u(s,t) = y_{i}, \\
        & \lim_{s\rightarrow \pm\infty} \epsilon_{z_i}^*u(s,t) = x_{i}.
    \end{cases}
\end{equation}
Here, $(\partial\Sigma)_i$ is the boundary component which originates from $\zeta_i$ with respect to the boundary orientation, the $y_i$'s are non-degenerate Hamiltonian chords, and the $x_i$'s are non-degenerate $1$-periodic orbits. 

The linearization of equation \eqref{popsicle equation'} at a solution $u$ is a Cauchy-Riemann operator $D_u$ which is Fredholm. Its index, under the map $\mathbb{Z} \rightarrow H_1(\gra(X))$, maps to
\begin{equation}
	\text{index}(D_u) = \deg(y_0) - \sum_{i=1}^{k} \deg(y_i) -\sum_{i=1}^{d} \deg(x_i).
\end{equation}
The proof is similar to Lemma \ref{index lemma}.

\subsection{Orientation operators}\label{s:orline}
Let $x: S^1 \to X$ be a non-degenerate Hamiltonian orbit. We choose a trivialization of the complex vector bundle $x^* TX$ over $S^1$,
\begin{equation}\label{trivialization of orbit}
 x^* TX \cong \mathbb{C}^n \times \mathbb{D}|_{\partial \mathbb{D}}.   
\end{equation}
In \cite[\S 1.4]{Abouzaid13}, Abouzaid defines a Cauchy-Riemann operator $D_x$ over $\mathbb{C}$, regarded as a Riemann surface with a single output at infinity. This operator maybe thought of as the gluing of the canonical operator $D_{x,\rho_x}$ from Definition \ref{Canonicalops} and $D_{\mathbb{D},\rho_x}$; the operator on the disk $\mathbb{D}$ with boundary condition along the Lagrangian loop $\rho_x$. In particular, using standard gluing theorems (see \cite[(11c)]{PL-theory}), together with the Riemann-Roch theorem (see \cite[Lemma 11.7]{PL-theory}) we have that
\begin{align}
 \ind(D_x) 
	&= \ind(D_{\mathbb{D},\rho_x}) \\
	&= n + \mu(\rho_x) \notag\\
	&\equiv n + w_1(\rho_x) \mod 2.\notag
\end{align}
In particular, $\ind(D_x) = \deg(x) \mod 2$, under the map $H_1(\gra(X)) \rightarrow \mathbb{Z}_2$ described in Definition \ref{grading datum}.

The orientation line associated with $x$ is the $\mathbb{G}$-graded line 
\begin{equation}
	o_x \defeq \det(D_x),
\end{equation}
placed in degree $\deg(x)$.
Different choices of trivialization \eqref{trivialization of orbit} give rise to canonically isomorphic lines by the same argument as before: gluing Cauchy--Riemann operators over $\mathbb{CP}^1$, see \cite[Proposition 1.4.10]{Abouzaid13}.

\section{Floer theory in Liouville domains}\label{sec:Floer}

A Liouville domain is a tuple $(X,\omega,\theta)$ where $X$ is a smooth manifold with boundary, $\omega = d\theta$ is a symplectic form, and the Liouville vector field, defined by
\begin{equation}
    \iota_Z\omega = \theta,
\end{equation}
points outwards along the boundary $\partial X$. A Liouville domain has a well-defined radial function $r:X\rightarrow \mathbb{R}_{\geq 0}$ such that $\log(r)$ is the time it takes a point $p\in X$ to reach $\partial X$ when $p$ is flown using the Liouville vector field $Z$. This radial function provides a collar neighborhood of the boundary
\begin{equation}\label{collar neighborhood}
    \phi^{\log(r)}_Z : (0,1]\times \partial X \rightarrow X,
\end{equation}
which in turn can be used to define the completion
\begin{equation}
    \widehat{X} = (X,\theta) \cup (\mathbb{R}_{>0}\times \partial X, r\theta_{\partial X})/ \sim,
\end{equation}
where $\sim$ is gluing over the collar (\ref{collar neighborhood}). We abuse notation and continue to use $\omega$, $\theta$ and $Z$ for the Liouville structure on $\widehat{X}$. It is useful to remember that in terms of the radial coordinate, $Z = r\partial_r$.

A time-dependent Hamiltonian $H:S^1\times X \rightarrow \mathbb{R}$ has an associated vector field $X_H\in C^{\infty}(S^1\times X,TX)$ defined by the equation 
\begin{equation}
    \iota_{X_{H}}\omega = -d_XH.
\end{equation}
Its set of $1$-periodic orbits is
\begin{equation}
    \mathcal{X}_{H} \defeq \{ x:S^1\rightarrow X \ | \ \dot{x}(t) = X_{H}(x(t)) \}.
\end{equation}
It is in one to one correspondence with the intersection of $\text{Graph}(\phi_1^{X_H})\subseteq X\times X$ with the diagonal, where $\phi_1^{X_H}:X\rightarrow X$ is the time $1$-flow of $X_H$. When this intersection is transverse, we say that $H$ is \emph{non-degenerate}.

We will require a special type of Hamiltonians which we now describe. Fix $r_0 \in (0,1)$ and $\epsilon\in (0,1-r_0)$, and let $h:\mathbb{R}_{\geq 0}\rightarrow \mathbb{R}_{\geq 0}$ be a non-decreasing smooth function such that
\begin{equation}
    \begin{cases}
        & h(r) = 0 \quad \text{if} \ r\leq r_0,\\
        & h''(r) = 0 \quad \text{if} \ r>1-\epsilon,\\
        & n\cdot h'(1) \notin \text{Spec}(R_{\theta}),\ \text{for all}\ n\in \mathbb{N}.
    \end{cases}
\end{equation}
Here, $\text{Spec}(R_{\theta})\subseteq \mathbb{R}$ is the set of periods of the Reeb flow on $\partial X$. 
\begin{definition}
A \emph{basic sequence} of Hamiltonians is a non-decreasing sequence $\bH = (H_n)_{n=1}^{\infty}$ of non-degenerate Hamiltonians on $X$ of the form $H_n = n\cdot h(r) + K_n$, such that
\begin{itemize}
    \item[(i)] The sequence $K_n$ is non-positive and compactly supported in the region $\{r \leq 1-\epsilon\}$.
    \item[(ii)] There is a sequence of autonomous Hamiltonians  $C_n:X\rightarrow \mathbb{R}$ for which $K_{n-1} \leq C_n \leq K_n$.
    \item[(iii)] The orbits of the $H_n$ are pairwise disjoint.
\end{itemize}
By convention, $H_0 = 0$.
\end{definition}

\begin{figure}
    \centering
    \begin{tikzpicture}[x=0.75pt,y=0.75pt,yscale=-1,xscale=1]
        
        \draw  (143,331.8) -- (560,331.8)(184.7,87) -- (184.7,359) (553,326.8) -- (560,331.8) -- (553,336.8) (179.7,94) -- (184.7,87) -- (189.7,94)  ;
        \draw   (340,332) -- (348,332)(344,327) -- (344,337) ;
        \draw   (429,332) -- (437,332)(433,327) -- (433,337) ;
        \draw   (512,332) -- (520,332)(516,327) -- (516,337) ;
        \draw    (184.7,331.8) -- (363,332) ;
        \draw    (363,332) .. controls (394,334) and (415,309) .. (430,293) ;
        \draw    (430,293) -- (516,209) ;
        
        \draw (338,340) node [anchor=north west][inner sep=0.75pt]   [align=left] {$r_0$};
        \draw (415,337) node [anchor=north west][inner sep=0.75pt]   [align=left] {$1-\epsilon$};
        \draw (510,340) node [anchor=north west][inner sep=0.75pt]   [align=left] {$1$};
        \draw (566,328) node [anchor=north west][inner sep=0.75pt]   [align=left] {$r$};
        \draw (172,64) node [anchor=north west][inner sep=0.75pt]   [align=left] {$h(r)$};
        \end{tikzpicture}
        \caption{}
        \label{Fig: h(r)}

\end{figure}

\subsection{Hamiltonian Floer theory}\label{Hamiltonian Floer theory}

Let $\bH$ be a basic sequence. We denote by $\mathcal{X}_n$ the set of $1$-periodic orbits of $H_n$. Each $x\in \mathcal{X}_{n}$ has an associated $\mathbb{G}$-graded line $o_x$, and its normalization $\abs{o_x}$. 
Floer's complex associated with $H_n$ is the $\mathbb{G}$-graded vector space
\begin{equation}\label{Floer complex}
    CF^*(X,H_n)  = \bigoplus_{x\in \mathcal{X}_{n}} \abs{o_x}.
\end{equation}
It is equipped with a differential which counts maps $u:\mathbb{R}\times S^1 \rightarrow X$ satisfying the pseudo-holomorphic equation
\begin{equation}\label{Floer trajectory}
    \begin{cases}
       & \partial_s u + J_t (\partial_t u - X_{H_n}(u)) = 0\\
       & E(u)\defeq \int \abs{\partial_s u}^2 < +\infty,
    \end{cases}
\end{equation}
where $J_t$ is an appropriately chosen almost complex structure (see \S \ref{subsubsec: almost complex structures}).
For each \emph{distinct} pair $x_-,x_+ \in \mathcal{X}_{n}$, define the moduli space
\begin{equation}\label{Floer trajectories}
    \Mms^\al(x_-,x_+) = \left\{ u\  \text{satisfies (\ref{Floer trajectory}) and}\ \lim_{s\rightarrow \pm\infty} u(s,t) = x_{\pm} \right\}/\mathbb{R},
\end{equation}
where the action of $\mathbb{R}$ is by translation in the $s$-direction, $r\cdot u(s,t) = u(s+r,t)$. Floer's differential on (\ref{Floer complex}) is
\begin{equation}\label{Floer-differential}
    dx_+ = \sum_{x_-\neq x_+}\# \Mms^\al(x_-,x_+)\cdot x_-.
\end{equation}
Here, $\# \Mms^\al(x_-,x_+)$ is notation for the signed count of isolated points in this moduli space.

Given any two terms $H^-$ and $H^+$ of the basic sequence $\bH$ such that $H^- \geq H^+$, we can also construct a continuation chain map
\begin{equation}\label{continuation map}
    c: CF^*(X,H^+) \rightarrow CF^*(X,H^-).
\end{equation}
Let $H_s = n(s)\cdot h(r) + K_{s}$ be a decreasing homotopy from $H^-$ to $H^+$, so that
\begin{equation}\label{decreasing H}
  \partial_s H_s \leq 0 \quad\text{and}\quad  H_{\pm s} = H^{\pm} \quad \text{for}\  s \gg 0.
\end{equation}

Continuation elements are maps $u:\mathbb{R}\times S^1 \rightarrow X$ which satisfy
\begin{equation}\label{continuation equation}
    \begin{cases}
       & \partial_s u + J_{s,t} (\partial_t u - X_{H_s}(u)) = 0\\
       & E(u)\defeq \int \abs{\partial_s u}^2 < +\infty,
    \end{cases}
\end{equation}
where $J_{s,t}$ is an appropriately chosen family of almost complex structures. For each (not necessarily distinct) pair $x_{\pm}\in\mathcal{X}_{H^{\pm}}$, define the moduli space
\begin{equation}
    \Mms^\al_{c}(x_-,x_+) = \left\{ u\  \text{satisfies (\ref{continuation equation}) and}\ \lim_{s\rightarrow \pm\infty} u(s,t) = x_{\pm} \right\}.
\end{equation}
Similar to Floer's differential (\ref{Floer-differential}), the continuation map is
\begin{equation}
    c(x_+) = \sum \# \Mms^\al_{c}(x_-,x_+)\cdot x_-.
\end{equation}

This construction produces a sequence of continuation maps
\begin{equation}
    CF^*(X,H_n) \rightarrow CF^*(X,H_{n+1}).
\end{equation}
Their homotopy direct limit (see \cite[\S 3.7]{open-string-analogue}) is the complex 
\begin{equation}\label{telescope complex}
    SC^*(X,\mathbf{H}) = \bigoplus_{n=1}^{\infty}CF^*(X,H_n)[t],
\end{equation}
where $t$ is a formal variable of degree $-1$ such that $t^2=0$. It is equipped with the differential
\begin{equation}\label{eq:dSC}
    d^{SC}(x+tx') = dx - tdx' + c(x') - x'.
\end{equation}
The cohomology of this complex is called symplectic cohomology and is denoted $SH^*(X)$. It only depends on the deformation type of the Liouville domain $X$.

More generally, let $r = [(C,z_0,\dots,z_d;\theta_0),\psi]\in \Rms^{\al}_{d,\bp,\bn}$ be a (smooth) $\bp$-flavoured genus $0$ curve with $d$-marked points and aligned framings, carrying a sprinkle $\psi$, and weights $\bn$. There is a generalized Floer equation for maps $u:\Sigma_r \rightarrow X$ in the complement $\Sigma_r = C\backslash\{z_0,\dots,z_d\}$:
\begin{equation}\label{lollipop equation}
    \begin{cases}
        & (du - Y_r)_{J_r}^{0,1} = 0 \\
        & E(u) \defeq \int \abs{du-Y_r}^2 < +\infty.
    \end{cases} 
\end{equation}
In equation (\ref{lollipop equation}), $J_r = (J_z)_{z\in\Sigma_r}$ is an appropriate domain-dependent almost complex structure, $Y_r \in \Omega^1(\Sigma_r, C^{\infty}(TX))$ is a $1$-form on $\Sigma_r$ with values in \emph{Hamiltonian} vector fields on $X$, and 
\begin{equation}
    (du - Y_r)_{J_r}^{0,1} \defeq  du - Y_r + J_r\circ (du - Y_r)\circ j_{\Sigma_r}.
\end{equation}
The perturbation datum $Y_r$ is $\omega$-dual to a $1$-form of Hamiltonian vector fields, which we can lift to a $1$-form of Hamiltonian functions $\mathcal{K}_r \in \Omega^1(\Sigma_r, C^{\infty}(X))$ which satisfies
\begin{equation}\label{end constraints}
    \epsilon_i^* \mathcal{K}_r = H_{n_i}dt \quad \text{for}\ i=0,1,\dots,d\ \ \text{and} \ \pm s\gg 0.
\end{equation}

For example, when $\Sigma_r$ is a cylinder with one flavour (i.e. $F=\{1\}$), equation (\ref{lollipop equation}) is the same as the equation (\ref{continuation equation}) for continuation elements, with 
\begin{equation}\label{Hamiltonian for 1 sprinkle}
Y_r = X_{H_{s,t}}\otimes dt.
\end{equation}

As before, let $\bx = (x_0,\dots,x_d)$ be a collection of $1$-periodic orbits $x_0\in \mathcal{X}_{n_0},\dots, x_d\in \mathcal{X}_{n_d}$. Then, we have a moduli space of solutions
\begin{equation}
    \Mms^{\al}_r(\bx) = \{ u\ \text{satisfies (\ref{lollipop equation}) and}\ \lim_{s\rightarrow \pm\infty} \epsilon_i^*u(s,t) = x_{i} \ \text{for}\ i=0,\dots, d\}.
\end{equation}
The family version allows for $r = [(C,z_0,\dots,z_d;\theta_0),\psi]$ to vary in its moduli space,
\begin{equation}
    \Mms^{\al}_{d,\bp,\bn}(\bx) = \{ (r,u) \ \vert \ r\in \Rms^{\al}_{d,\bp,\bn} \ \text{and} \ u \in \Mms^{\al}_r(\bx) \}.
\end{equation}

\subsection{Transversality and compactness}\label{subsec:Transversality and compactness}

Having well-behaved moduli spaces of pseudo-holomorphic curves hinges on making appropriate choices of perturbation data $(J_r,Y_r)$. We explain how to make such choices for $r \in \Rms^{\al}_{d,\bp,\bn}$.
\subsubsection{Hamiltonian perturbations} The perturbation datum $Y_r$ is the $\omega$-dual of $\mathcal{K}_r = H_r\otimes \gamma_r \in \Omega^{1}(\Sigma_r,C^{\infty}(X))$, where $H_r$ is a domain-dependent Hamiltonian and $\gamma_r\in \Omega^1(\Sigma_r)$ is a $1$-form.
We require our perturbation to satisfy
\begin{equation}
    \epsilon_i^* \mathcal{K}_r = H_{n_i}dt \quad \text{for}\ i=0,1,\dots,d\ \ \text{and} \ \pm s\gg 0.
\end{equation}

This is achieved by choosing $\gamma_r$ such that
\begin{equation} \label{conditions on gamma}
    \begin{cases}
    \epsilon_i^*\gamma_r = n_idt \quad &\text{for}\ i=0,1,\dots,d\ \ \text{when} \ \pm s\gg 0,\\
    d\gamma_r \leq 0 \quad &\text{i.e. }\gamma_r\ \text{is sub-closed}.
    \end{cases}
\end{equation}
The Hamiltonian term is chosen to have the form $H_r = h(r) + F_r$, where $F_r$ is a domain-dependent Hamiltonian with compact support in $\{r \leq 1-\epsilon \}$ such that
\begin{equation}
    \epsilon_i^* F_r = K_{n_i}/n_i \quad \text{for}\ i=0,1,\dots,d\ \ \text{and} \ \pm s\gg 0.
\end{equation}

The curvature of $\mathcal{K}_r$ is defined as
\begin{equation}\label{curvature term}
    R(\mathcal{K}_r)(y) \defeq d_{\Sigma_r}(H_r(-,y)\gamma_r), \quad \text{for}\ y\in X.
\end{equation}
Note that compactness of our moduli spaces requires a curvature estimate
\begin{equation}
    \int_{\Sigma_r} R(\mathcal{K}_r)(-,y) \leq C_{d,\bp,\bn}.
\end{equation}

The construction of $\gamma_r$ is easier. The idea is that when $r\in \Rms^{\al}_{d,\bp,\bn}$ is fixed, the space of $1$-forms satisfying \eqref{conditions on gamma} is convex and nonempty, hence contractible. Moreover, the constraints \eqref{conditions on gamma} are compatible with the gluing operation on the moduli spaces $\Rms^{\al}_{d,\bp,\bn}$. Hence, it suffices to construct $\gamma_r$ in two cases.
\begin{itemize}
    \item[-] Case 1: When $\Sigma_r$ is a cylinder and $n_0=1+n_1$. Simply use $\gamma_r = \rho_n(s)dt$ where $\rho:\mathbb{R}\rightarrow \mathbb{R}$ is a non-increasing function such that
    \begin{equation}
        \rho_n(s) =
        \begin{cases}
            n_1 \quad &\text{for}\quad s \gg 0,\\
            n_0 \quad &\text{for}\quad s \ll 0.
        \end{cases}
    \end{equation}

    \item[-] Case 2: When $\Sigma_r$ is a pair of pants and $n_0 = n_1 + n_2$. Start with any $1$-form $\alpha_1$ such that $\epsilon_i^*\alpha_1 = n_i dt$. Then $d\alpha_1\in \Omega_c^2(\Sigma_r)$ is a closed and compactly supported $2$-form which integrates to $0$. Hence, there is a compactly supported $1$-form $\alpha_2$ such that $d\alpha_1 = d\alpha_2$. 
    Then $\gamma_r = \alpha_1-\alpha_2$ satisfies the conditions of \eqref{conditions on gamma}. 
\end{itemize}

Once we have constructed $\gamma_r$ in these base cases, we can inductively construct it on all the moduli spaces $\Rms^{\al}_{d,\bp,\bn}$ by gluing, see \cite[\S (9g)]{PL-theory} for an example of this inductive argument. See also \cite[\S 2.6]{open-string-analogue} for a similar construction for moduli spaces of discs.

The construction of the Hamiltonian term $H_r = h(r) + F_r$ is trickier. 
Recall that we have a sequence $C_n:X\rightarrow \mathbb{R}$ of autonomous Hamiltonians such that for all $n\geq 2$, 
\begin{equation}\label{increasing Kn}
    K_{n-1}\leq  C_{n} \leq K_n \leq 0.
\end{equation}
Define the constants 
\begin{equation}
    \widetilde{C}_n = \min_y \frac{C_{n}(y)}{n}.
\end{equation}

\begin{lemma}\label{Hamiltonian piece}
    For any $r\in \Rms^{\al}_{d,\bp,\bn}$, $\Sigma_r$ carries a Hamiltonian $F_r$ such that the perturbation datum $\mathcal{K}_r = (h(r)+F_r)\otimes\gamma_r$ satisfies
    \begin{equation}\label{curvature inequality}
     R(\mathcal{K}_r) \leq \widetilde{C}_{n_0} d\gamma_r.
    \end{equation}
\end{lemma}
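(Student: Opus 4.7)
The plan is to reformulate the curvature inequality as a sub-closedness condition on a $1$-form on $\Sigma_r$, construct $F_r$ with a fixed bulk value and a monotone transition to the prescribed end values, and extend inductively across the moduli space by gluing. Since $\widetilde{C}_{n_0}$ is a $z$-independent constant, the inequality $R(\mathcal{K}_r) \leq \widetilde{C}_{n_0}\,d\gamma_r$ rearranges to
\[ d_{\Sigma_r}\bigl((H_r(z,y) - \widetilde{C}_{n_0})\gamma_r(z)\bigr) \leq 0 \quad \text{for every } y\in X, \]
that is, $(H_r(\cdot,y) - \widetilde{C}_{n_0})\gamma_r$ is sub-closed on $\Sigma_r$ for every $y$. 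At the $i$-th end, $\epsilon_i^*H_r = h(r) + K_{n_i}/n_i$ is $s$-independent and $\epsilon_i^*\gamma_r = n_i\,dt$, so this $1$-form is closed on the cylindrical end and the condition is automatic.

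Away from the ends, I would set $F_r(z,y) := K_{n_0}(y)/n_0$, independent of $z\in\Sigma_r$ and automatically supported in $\{r\leq 1-\epsilon\}$. The chain $K_{n_0}/n_0 \geq C_{n_0}/n_0 \geq \widetilde{C}_{n_0}$, coming from $C_{n_0}\leq K_{n_0}$ in the definition of the basic sequence, gives $H_r - \widetilde{C}_{n_0} \geq 0$; combined with $z$-independence this yields
\[ d_{\Sigma_r}\bigl((H_r - \widetilde{C}_{n_0})\gamma_r\bigr) = (H_r - \widetilde{C}_{n_0})\,d\gamma_r \leq 0 \]
in the bulk, since the first factor is nonnegative and the second is nonpositive.

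In a collar of each end, $F_r$ must interpolate from $K_{n_0}/n_0$ to $K_{n_i}/n_i$. Using the flexibility in \eqref{conditions on gamma}, I would arrange $\gamma_r = n_i\,dt$ throughout the collar, so that sub-closedness reduces to the $s$-monotonicity $\partial_s F_r \leq 0$. This is achievable because $K_{n_0}/n_0 \geq K_{n_i}/n_i$: indeed $|K_{n_0}| \leq |K_{n_i}|$ by monotonicity of $K_n$, and $n_0 \geq n_i$ by the weight identity $n_0 = n_1+\cdots+n_d+|F|$, yielding $|K_{n_0}|/n_0 \leq |K_{n_i}|/n_i$. One may then take $F_r(s,y) = \lambda(s)\,K_{n_0}(y)/n_0 + (1-\lambda(s))\,K_{n_i}(y)/n_i$ with $\lambda$ smoothly decreasing from $1$ to $0$, $t$-independent. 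For general $r$ I would extend this construction inductively across the strata of $\overline{\Rms}^{\al}_{d,\bp,\bn}$ via the gluing operation \eqref{gluing}, following the same pattern as the preceding construction of $\gamma_r$; the set of valid $F_r$ is convex for fixed $\gamma_r$, so patching local constructions is unobstructed.

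The main technical difficulty is ensuring the joint compatibility of the $\gamma_r$ and $F_r$ constructions across strata: the gluing-compatible $\gamma_r$ must be arranged with $\gamma_r = n_i\,dt$ on collars of each end, and the $F_r$-transition must glue consistently with the stratum structure in a $\Sym$-equivariant fashion. The choice of $\widetilde{C}_{n_0}$ (rather than a weaker quantity such as $0$) is essential for the bulk inequality $H_r - \widetilde{C}_{n_0} \geq 0$ to hold for every $y\in X$; replacing it would break the pointwise comparison needed to reduce to sub-closedness.
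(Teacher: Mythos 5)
Your proof is correct, and it takes a genuinely different route from the paper's. The paper introduces an auxiliary interpolation family $\{K_v\}_{v\ge 1}$ (with $K_n$ at integer values, $K_{n+1/2}=C_{n+1}$, and $\partial_v K_v\ge 0$) and sets $\epsilon_i^*F_r = K_{f_i(s)}/f_i(s)$ on the $i$th end, with $f_i$ decreasing, while taking $C_{n_0}/n_0$ as the bulk value; the monotonicity of the end profile is then derived from $\partial_v K_v\ge 0$ and $K_v\le 0$. You instead take $K_{n_0}/n_0$ as the bulk value and a direct convex interpolation $\lambda(s)K_{n_0}/n_0+(1-\lambda(s))K_{n_i}/n_i$, with the necessary sign $\partial_s F_r\le 0$ following from the elementary observation that $K_n/n$ is non-decreasing in $n$ (since the $K_n$ are non-positive and non-decreasing by (ii), and $n_0\ge n_i$). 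This sidesteps the $K_v$ family entirely, which is a genuine simplification at the level of this lemma. The reformulation of \eqref{curvature inequality} as sub-closedness of $(H_r-\widetilde{C}_{n_0})\gamma_r$ is a clean way to organize the same two checks the paper does (bulk: $z$-independent $H_r$ with $H_r-\widetilde{C}_{n_0}\ge 0$ and $d\gamma_r\le 0$; ends: $d\gamma_r=0$ and $\partial_s F_r\le 0$). Two small remarks: first, the lemma is a per-$r$ existence statement, so the gluing discussion in your final paragraph is not needed here --- it belongs to Corollary~\ref{Hamiltonian perturbations}, which invokes Lemma~\ref{Hamiltonian piece} only for nonemptiness and then uses convexity of the set of admissible pairs to produce a consistent universal choice. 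Second, the paper's more elaborate $K_v$-interpolation and the choice of bulk value $C_{n_0}/n_0$ appear to be designed with an eye toward that inductive gluing step; your simpler per-$r$ construction is perfectly adequate for the lemma, and the eventual consistent choice is obtained abstractly by contractibility in either case.
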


\begin{proof}
    The case when $d=1$ is straightforward, so we assume $d\geq 2$. The inequality \eqref{increasing Kn} implies (by convex interpolation) that there is a smooth family of time-dependent Hamiltonians $\{K_{v} \ |\ v\in \mathbb{R}_{\geq 1} \}$ such that
    \begin{equation}
        \begin{cases}
         K_v = K_n \quad &\text{when} \ v = n \in \mathbb{N},\\
         K_{v+1/2} = C_{n+1} \quad &\text{when} \ v = n \in \mathbb{N},\\
         \partial_v K_v \geq 0.
        \end{cases}
    \end{equation}

    Consider a Riemann surface $\Sigma_r$, with $r\in \Rms^{\al}_{d,\bp,\bn}$ such that $d\geq 2$. We construct the Hamiltonian $F_r\in C^{\infty}(\Sigma_r\times X)$ in pieces. On the cylindrical end $\epsilon_i$, we construct
    \begin{equation}
        (\epsilon_i^* F_r)(s,t) = \frac{K_{f_i(s)}}{f_i(s)},
    \end{equation}
    where $f_i$ is a decreasing function such that
    \begin{equation}
        f_i(s) = 
        \begin{cases*}
         n_i &\text{when}\ $\pm s\gg 0$\\
         n_0-1/2 &\text{when}\ $\pm s$ \text{is near} \ 0.
        \end{cases*}
    \end{equation}
    (Note that $n_0 \ge n_1+n_2 \ge 2$, so $n_0 - 1/2 \ge 1$, hence $K_{f_i(s)}$ is well-defined.)
    On the complement of the cylindrical ends, we construct
    \begin{equation}
        F_r = \frac{C_{n_0}}{n_0}
    \end{equation}
    which is domain-independent. 
    Setting $\mathcal{K}_r = (h(r)+F_r)\otimes\gamma_r$, it is not difficult to see that
    \begin{itemize}
        \item[-] Outside the cylindrical ends,
        \begin{equation}
            R(\mathcal{K}_r) \leq (h(r)+F_r)d\gamma_r \leq \frac{C_{n_0}}{n_0} d\gamma_r.
        \end{equation}
        \item[-] On the cylindrical end $\epsilon_i$,
        \begin{equation}
            \epsilon_i^*R(\mathcal{K}_r) = f_i'(s)\left(\frac{\partial_v K_{f_i(s)}}{f_i(s)} - \frac{K_{f_i(s)}}{f_i(s)^2}\right)ds\wedge dt\leq 0
        \end{equation}
    \end{itemize}
    Overall, $R(\mathcal{K}_r)\leq \widetilde{C}_{n_0}d\gamma_r$.
\end{proof}
In order to make a consistent universal choice of $\mathcal{K}_r$, we define
    \begin{equation}
        C_{\bn} =   
            \begin{cases}
                 0 \quad &\text{if}\ d = 1\\
                 \min_{j} \widetilde{C}_{n_j} \quad &\text{if}\ d\geq 2,
            \end{cases}
    \end{equation}
These new constants have the advantage that when $r \in \Rms^{\al}_{d,\bp,\bn}$ breaks into a pair of aligned framed curves $r_{\pm} \in \Rms^{\al}_{d_{\pm},\bp_{\pm},\bn_{\pm}}$, we have
\begin{equation}\label{min-property}
    C_{\bn} = \min(C_{\bn_-},C_{\bn_+}).
\end{equation}
That is because the constants $\widetilde{C}_{n}$ are increasing in $n$, so that $\widetilde{C}_{\bn_{-,0}}\geq \widetilde{C}_{\bn_{-,1}}$.

\begin{corollary} \label{Hamiltonian perturbations}
There is a consistent universal choice of Hamiltonian perturbations $\mathcal{K}_r = H_r\otimes \gamma_r$ for every $r\in \Rms^{\al}_{d,\bp,\bn}$ which is $\Sym(d,\bp)$-invariant and such that
\begin{equation}
    R(\mathcal{K}_r) \leq C_{\bn}d\gamma_r.
\end{equation}
\end{corollary}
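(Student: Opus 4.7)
The approach is to construct $\mathcal{K}_r = H_r \otimes \gamma_r$ by induction on the codimension of strata in the compactification $\overline{\Rms}^{\al}_{d,\bp,\bn}$, following the same template used earlier for the sub-closed $1$-form $\gamma_r$ and for the universal cylindrical ends. Since $\gamma_r$ has already been fixed, the task reduces to producing the Hamiltonian $H_r = h(r) + F_r$ coherently and $\Sym(d,\bp)$-equivariantly while maintaining the global bound $R(\mathcal{K}_r) \leq C_{\bn} d\gamma_r$.

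I would start with the base cases, namely the unstable building blocks with $d=1$ and the stable strata with $d=2$, $F = \emptyset$: on these spaces Lemma \ref{Hamiltonian piece} directly delivers an $F_r$ with the stronger pointwise bound $R(\mathcal{K}_r) \leq \widetilde{C}_{n_0} d\gamma_r$. Since $d\gamma_r \leq 0$, and $\widetilde{C}_n$ is non-decreasing in $n$ while $n_0 \geq n_j$ for every leaf-weight, we have $C_{\bn} = \min_j \widetilde{C}_{n_j} \leq \widetilde{C}_{n_0}$, which forces $\widetilde{C}_{n_0} d\gamma_r \leq C_{\bn} d\gamma_r$. Hence the stronger bound from the Lemma automatically implies the corollary's weaker bound in these initial cases.

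For the inductive step, assume that $\mathcal{K}_{r_v}$ has already been chosen on each factor of every boundary stratum $\Rms^{\al}_{T,\bF,\bn} = \prod_v \Rms^{\al}_{d_v,\bp_v,\bn_v}$ of positive codimension, satisfying $R(\mathcal{K}_{r_v}) \leq C_{\bn_v} d\gamma_{r_v}$. Gluing these perturbations using the universal cylindrical ends (which match $\mathcal{K}$ as $H_{n_i} \otimes dt$ at every node) produces a perturbation on an open neighborhood of $\partial\overline{\Rms}^{\al}_{d,\bp,\bn}$. On the portion of a glued surface originating from a vertex $v$ the local bound $C_{\bn_v}$ persists pointwise, and iterated application of \eqref{min-property} gives the global bound $R(\mathcal{K}_r) \leq C_{\bn} d\gamma_r$ on the whole neighborhood. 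To extend $\mathcal{K}_r$ into the interior, observe that the set of allowable $F_r$ (those pulling back to the prescribed data on the ends and satisfying the curvature inequality) is convex, as the bound is linear in $H_r$, and nonempty by Lemma \ref{Hamiltonian piece}. Thus the admissible perturbations form a contractible subbundle of a $\Sym(d,\bp)$-equivariant bundle, and a $\Sym(d,\bp)$-invariant section extends from a neighborhood of the boundary to the whole space by first descending to the quotient, which is legitimate because $\Sym(d,\bp)$ acts freely on the interior.

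The principal technical point, and the main obstacle to overcome, is the propagation of the curvature bound through gluing. This is precisely why the universal constants are the $C_{\bn}$ rather than the sharper $\widetilde{C}_{n_0}$: the $C_{\bn}$ are defined exactly so that \eqref{min-property} holds, and it is this identity, not the tighter pointwise bound of Lemma \ref{Hamiltonian piece}, which controls a perturbation built by gluing inductively chosen pieces. All other ingredients---consistency under gluing, $\Sym(d,\bp)$-equivariance, and extension from a boundary neighborhood---then parallel the arguments of \cite[\S (9g)]{PL-theory} and \cite[\S 2.6]{open-string-analogue} without essential modification.
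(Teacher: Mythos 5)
Your proposal is correct and follows essentially the same route as the paper: contractibility of the admissible perturbations at each fixed $r$ (by convexity of the curvature inequality in $F_r$ and nonemptiness from Lemma \ref{Hamiltonian piece}), compatibility of the bound under gluing via \eqref{min-property}, and extension from the boundary using the standard inductive scheme of \cite[\S(9g)]{PL-theory}. Your observation that Lemma \ref{Hamiltonian piece}'s pointwise bound $R(\mathcal{K}_r)\leq \widetilde{C}_{n_0}\,d\gamma_r$ already implies the weaker $R(\mathcal{K}_r)\leq C_{\bn}\,d\gamma_r$ (because $d\gamma_r\leq 0$ and $C_{\bn}\leq \widetilde{C}_{n_0}$) is exactly the right bookkeeping. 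The one small divergence is how $\Sym(d,\bp)$-invariance is obtained: you descend to the quotient by the free action on the interior, mirroring the argument the paper uses for cylindrical ends, whereas the paper's proof of this corollary instead averages an arbitrary consistent choice over $\Sym(d,\bp)$-orbits, which is licensed directly by the convexity of the admissible set. Both are valid; the averaging route is slightly cleaner here since it avoids re-invoking freeness and keeps convexity as the only structural input.
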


\begin{proof}
For each $r\in \Rms^{\al}_{d,\bp,\bn}$, the collection of Hamiltonian perturbations of the form $\mathcal{K}_r  = (h(r)+F_r)\otimes \gamma_r$ such that
$R(\mathcal{K}_r) \leq C_{\bn}d\gamma_r$
is non-empty (by Lemma \ref{Hamiltonian piece}) and convex, hence contractible. Moreover, a choice of perturbations on $\partial\Rms^{\al}_{d,\bp,\bn}$ can be extended to a neighborhood of the boundary by gluing. This is due to the property \eqref{min-property}, together with the fact that the sub-closed $1$-forms $\gamma_r$ are already compatible with gluing. Therefore, following the inductive process of \cite[\S (9g)]{PL-theory}, we can make a consistent universal choice of perturbations as desired.

Finally, to ensure $\Sym(d,\bp)$-invariance, one can take the average over the orbits of this symmetry group.
\end{proof}

\subsubsection{Almost complex structures} \label{subsubsec: almost complex structures}
An almost complex structure $J$ on $X$ is said to be of \emph{contact type} if it is $\omega$-compatible and if on the region $\{r\geq 1-\epsilon \}\subseteq X$, we have
\begin{equation}
    dr = \theta\circ J.
\end{equation}
For each weight $n\in \mathbb{N}$, we fix a time-dependent $J_{n}$ which is of contact type and such that all Floer trajectories \eqref{Floer trajectories} of the Hamiltonian $H_{n}$ are Fredholm regular. 

Given $r\in \Rms^{\al}_{d,\bp,\bn}$, we want to choose a domain-dependent almost complex structure $J_r$ on $\Sigma_r$ which is of contact-type and such that
\begin{equation}
    \epsilon_{k}^* J_r \longrightarrow J_{n_k} \quad \text{as}\ \pm s \to \infty,
\end{equation}
asymptotically faster than any $\exp(-C\abs{S})$. 
\begin{lemma}\label{lem:transversality}
    Suppose the basic sequence $\bH$ is generic. Then, there exists a consistent universal choice of contact-type almost complex structures $J_r$ which is invariant under the action of $\Sym(d,\bp)$ such that the moduli spaces $\Mms^{\al}_{d,\bp,\bn}(x_0,\dots,x_d)$ are all Fredholm regular.
\end{lemma}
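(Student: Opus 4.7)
The plan is to follow the standard Floer-theoretic transversality scheme, proceeding inductively over the stratification of $\overline{\Rms}^{\al}_{d,\bp,\bn}$ by the number of internal edges, parallel to the induction already used in Corollary \ref{Hamiltonian perturbations}. The base cases are provided by the cylindrical ends, where the pre-chosen time-dependent $J_n$'s, made regular by the genericity of $\bH$, serve as asymptotic data. For the inductive step, I would assume that $J_r$ has been consistently and $\Sym$-equivariantly chosen on all lower-dimensional moduli spaces; gluing then induces a prescribed contact-type almost complex structure on a collar neighborhood of $\partial \overline{\Rms}^{\al}_{d,\bp,\bn}$. Since the fibre of admissible contact-type extensions of these boundary data is a nonempty convex set, hence contractible, I would first extend to an arbitrary smooth choice $J_r^{(0)}$ on the quotient by $\Sym(d,\bp)$, and then lift back to a $\Sym(d,\bp)$-invariant choice on $\overline{\Rms}^{\al}_{d,\bp,\bn}$, using that the action is free on the interior.

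To upgrade $J_r^{(0)}$ to a Fredholm-regular choice, I would introduce a separable Banach manifold $\matheu{J}$ of $\Sym(d,\bp)$-equivariant perturbations of $J_r^{(0)}$ supported in a precompact region of the interior of $\Rms^{\al}_{d,\bp,\bn}$, away from the cylindrical ends and boundary strata. The universal moduli space of pairs $(J, u)$ with $J \in \matheu{J}$ and $u$ a solution of \eqref{lollipop equation} is a smooth Banach manifold, provided the universal linearization is surjective at each $(J, u)$. Surjectivity would follow from the usual cokernel-killing argument: any nonzero element of $\mathrm{coker}(D_u)$ can be paired against a perturbation of $J$ supported near a somewhere injective point $z \in \Sigma_r$ with $u(z)$ in the interior region $\{r < 1-\epsilon\}$, where the contact-type condition imposes no constraint on admissible perturbations. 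The Sard--Smale theorem then yields a comeager set of regular $J$ for each asymptotic datum $\bx$; intersecting over the countable collection of possible $\bx$ produces a single $J_r$ simultaneously regular for all of them, and averaging over the (free) $\Sym(d,\bp)$-orbits preserves regularity since it is an open condition.

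The main obstacle is securing such a somewhere injective point $z$ with $u(z) \in \{r < 1-\epsilon\}$. Standard unique continuation gives somewhere injectivity for any non-constant solution, so the delicate part is forcing the injective locus into the interior region. This requires an integrated maximum principle: on the collar $\{r \geq 1-\epsilon\}$ the Hamiltonian is purely radial, the almost complex structure is of contact type, and $\gamma_r$ is sub-closed, so the curvature bound from Corollary \ref{Hamiltonian perturbations} together with the weight equation $n_0 = n_1 + \cdots + n_d + \abs{F}$ prevents $u$ from being contained entirely in the collar unless it is a branched cover of a Reeb-orbit cylinder, which is excluded by the assumption that the orbits of $H_n$ are pairwise disjoint. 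This is the key place where the contact-type constraint, the specific form of the basic sequence, and the genericity of $\bH$ combine to make the Sard--Smale machinery applicable.
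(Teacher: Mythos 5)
There is a genuine gap. You have correctly identified that the argument should follow the standard Sard--Smale scheme, inductively over the stratification, with the only subtlety being the maximum principle that confines $u$ to the region where $J_r$ may be perturbed freely. However, the paper's proof defers exactly this standard part to Abouzaid--Seidel and devotes itself entirely to a different obstruction, which your proposal does not address: \emph{trivial solutions}, i.e.\ maps with $du = Y_r$ identically. For such a map, the term in the universal linearization coming from varying $J_r$ — which is proportional to $(\delta J_r)\circ(du - Y_r)\circ j_{\Sigma_r}$ — vanishes identically, so no perturbation of $J_r$ (however generic, and regardless of somewhere injectivity or the location of the image) can kill a cokernel element. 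Somewhere injectivity of $u$ as a map to $X$ does not help here: the issue is not injectivity of $u$ but vanishing of $du - Y_r$, and your cokernel-killing step requires a somewhere-injective point at which $du(z) \neq Y_r(z)$, which you never produce.

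The paper rules out these trivial solutions by a topological argument that is disjoint from your ``branched cover of a Reeb cylinder'' heuristic (and the latter is also misdirected, since the $C^0$-estimate already prevents $u$ from entering the collar $\{r > 1-\epsilon\}$, so there are no Reeb cylinders in play). Namely: outside the one allowed degenerate case ($d=1$, $n_0=n_1$, $x_0=x_1$), the assumption that the $1$-periodic orbits of the $H_n$'s are pairwise disjoint lets one choose a smooth function $f$ separating the image of $x_0$ from the images of $x_1,\dots,x_d$. If $du = Y_r = X_r \otimes \gamma_r$, then on a generic level set $C$ of $f\circ u$ one shows $\gamma_r|_C = 0$, and applying Stokes' theorem to the superlevel set bounded by $C$ and the input ends gives $\sum_{i=1}^d n_i = \int_U d\gamma_r \le 0$, contradicting the positivity of weights. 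This is the actual role of the pairwise-disjointness hypothesis in the lemma; your proposal invokes it for an unrelated (and unneeded) purpose. Without some argument excluding trivial solutions, the Sard--Smale machinery you set up does not close.
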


\begin{proof}
   This transversality argument is detailed in \cite[\S 8.3, 8.4]{open-string-analogue}. In our case, we need an alternative argument to exclude \emph{trivial solutions}, i.e. maps $u:\Sigma_r \rightarrow X$ which satisfy $du = Y_r$. Indeed, such maps satisfy the pseudo-holomorphic equation $(du-Y_r)^{0,1}_{J_r} = 0$ for \emph{any} $J_r$ and hence can't be made Fredholm regular by perturbations of the almost complex structure.
   
   Suppose now that $u$ is a solution of the differential equation
   \begin{equation}
    du = X_{r} \otimes \gamma_r.
   \end{equation}
    We have a solution whenever $d=1$, $n_0=n_1$, and $x_0=x_1$; but this is not one of the moduli spaces $\Mms^{\al}_{d,\bp,\bn}(x_0,\dots,x_d)$ that we care about. 
    We claim that there are no other solutions. 
    Indeed, in any other case, $x_0$ is distinct from $x_1,\ldots,x_d$. 
    Thus, as the orbits are pairwise disjoint by assumption, there exists a smooth function $f:X \to \R$ such that $f|_{\text{image}(x_0)} < -\epsilon$ and $f|_{\text{image}(x_i)} > \epsilon$ for $i=1,\ldots,d$. 
    The smooth function $f \circ u: \Sigma_r \to \R$ admits a regular level set $C=(f \circ u)^{-1}(c)$ for some $c \in (-\epsilon,\epsilon)$, by Sard's theorem. 
    If $v$ is a tangent vector transverse to $C$, then we have
    $$X_r(f) \cdot \gamma_r(v) = d(f \circ u)(v) \neq 0,$$
    so $X_r(f) \neq 0$. 
    If $w$ is a tangent vector pointing along $C$, then
    $$X_r(f) \cdot \gamma_r(w) = d(f \circ u)(w) = 0,$$
    so $\gamma_r(w) = 0$. 
    Now define $U = \{f \circ u \ge c\} \subset \Sigma_r$. 
    Note that the boundary of $U$ consists of the input orbits, together with $C$, and we have shown $\gamma_r|_C = 0$.
    Applying Stokes' theorem to $U$, we obtain
    $$\sum_{i=1}^d n_i = \int_{\partial U} \gamma_r = \int_U d\gamma_r \le 0, $$
    a contradiction.
\end{proof}

The energy of a solution $u$ to equation \eqref{lollipop equation} is given by
\begin{equation}
    E(u)\defeq \frac{1}{2}\int_{\Sigma_r} \norm{du-Y_r}^2_{J_r},
\end{equation}
where the square norm of a linear map $\nu: T\Sigma_r\rightarrow TX$ is the $2$-form
\begin{equation}
    \norm{\nu} ^2 = (\abs{\nu(v)}^2 + \abs{\nu(jv)}^2) v^{\vee} \wedge (jv)^{\vee}.
\end{equation}
The latter is independent of the choice of $v \in T\Sigma_r\backslash\{0\}$. Using Stokes theorem, it is not difficult to see that
\begin{equation}
    E(u) = A_{H_{n_0}}(x_0) - \sum_{i=1}^d A_{H_{n_i}}(x_i) + \int_{\Sigma} R(\mathcal{K}_r),
\end{equation}
where $R(\mathcal{K}_r)$ is the curvature term from (\ref{curvature term}), and
\begin{equation}
    A_{H_t}(x) = -\int_{S^1} x^*\theta + \int_{S^1} H_t(x(t)) dt.
\end{equation}
Therefore, we get an \`a-priori energy estimate for solutions of \eqref{lollipop equation},
\begin{equation} \label{energy-estimate}
    E(u) \leq A_{H_{n_0}}(x_0) - \sum_{i=1}^d A_{H_{n_i}}(x_i) - C_{\bn}\abs{F}.
\end{equation}

In the theory of pseudo-holomorphic curves, Gromov compactness requires an \`a-priori $C^0$-estimate as well. This is proved using an integrated version of the maximum principle due to Abouzaid-Seidel, see \cite[Lemma 7.2]{open-string-analogue}.

\begin{lemma}
    Let $(r,u) \in \Mms^{\al}_{d,\bp,\bn}(x_0,\dots,x_d) $ be a solution of the pseudo-holomoprhic equation \eqref{lollipop equation}. Then $\emph{\text{Im}}(u)\subseteq \{r \leq 1-\epsilon\}$.
\end{lemma}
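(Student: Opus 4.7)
The plan is to invoke the integrated maximum principle of Abouzaid--Seidel \cite[Lemma 7.2]{open-string-analogue}, adapted to our setting. The key preliminary observation is that no $1$-periodic orbit $x_i$ of $H_{n_i}$ lies in the slope region $\{r > 1-\epsilon\}$: there $h''(r) = 0$, so $h'(r) \equiv h'(1)$ is constant, and $X_{H_{n_i}} = n_i h'(1) R_\theta$; such a $1$-periodic orbit would yield a closed Reeb orbit on $\partial X$ of period $n_i h'(1)$, contradicting $n\cdot h'(1)\notin \text{Spec}(R_\theta)$. Consequently, any super-level set $(r\circ u)^{-1}([\rho, 1])$ with $\rho > 1-\epsilon$ is disjoint from the cylindrical ends of $\Sigma_r$.

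Assume for contradiction that $\text{Im}(u)$ meets $\{r > 1-\epsilon\}$. By Sard's theorem, choose a regular value $\rho \in (1-\epsilon,\, \sup(r\circ u))$ of $w := r\circ u$, and set $U := w^{-1}([\rho, 1])$, a compact smooth submanifold of $\Sigma_r$ with smooth boundary $C := w^{-1}(\rho)$ lying in the interior. On $U$ the perturbation data simplify: $F_r = 0$, so $H_r = h(r)$ and $Y_r = h'(r) R_\theta \otimes \gamma_r$, while $J_r$ is of contact type, $\theta\circ J_r = dr$.

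Consider the $1$-form $\alpha := u^*\theta - (h\circ r \circ u)\gamma_r$ on $U$. A direct calculation using the perturbed Cauchy--Riemann equation $(du - Y_r)^{0,1}_{J_r} = 0$ produces the pointwise identity
$$d\alpha = \tfrac{1}{2}\,\norm{du - Y_r}^2_{J_r}\,\mathrm{dvol}_{\Sigma_r} - (h\circ r \circ u)\, d\gamma_r,$$
which is non-negative on $U$ since $h\geq 0$ and $d\gamma_r \leq 0$ by \eqref{conditions on gamma}. Along $C$, the contact-type condition combined with the linearity of $h$ in the slope region ($h''\equiv 0$ for $r > 1-\epsilon$) and the fact that $C$ is a regular level set of $r\circ u$ yields the Abouzaid--Seidel boundary inequality $\alpha|_C \leq 0$ with respect to the induced boundary orientation. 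Stokes' theorem then forces
$$0 \leq \int_U d\alpha = \int_C \alpha \leq 0,$$
so both integrals vanish. In particular $\norm{du - Y_r}^2_{J_r} \equiv 0$ on $U$, i.e.\ $du = Y_r$ throughout $U$; hence $d(r\circ u) = dr \circ Y_r = h'(r)\gamma_r\cdot dr(R_\theta) = 0$, so $r\circ u$ is locally constant on $U$. Since each connected component of $U$ meets $C$, we deduce $r\circ u \equiv \rho$ on $U$, contradicting the existence of interior points where $r\circ u > \rho$.

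The main technical obstacle is the boundary estimate $\alpha|_C \leq 0$. This requires decomposing $du(\tau)$ for $\tau$ tangent to $C$ into its Reeb, contact, and radial parts using the Cauchy--Riemann equation $J_r(du-Y_r)=(du-Y_r)\circ j$ together with $\theta = -dr\circ J_r$, and exploiting the linearity of $h$ in the slope region so that the cross-terms involving $\gamma_r(\tau)$ (whose pointwise sign is not controlled) combine into an expression whose integral over $C$ is bounded using $\int_C \gamma_r = \int_U d\gamma_r \leq 0$ (valid because $\partial U = C$ is disjoint from the ends).
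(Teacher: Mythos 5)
Your overall strategy — the integrated maximum principle of Abouzaid--Seidel, using the Cauchy--Riemann equation, the contact-type condition, sub-closedness of $\gamma_r$, and Stokes on a super-level set of $r\circ u$ — is exactly the paper's. Your preliminary observation that the asymptotic orbits are confined to $\{r\leq 1-\epsilon\}$ (so that $U$ is compact and disjoint from the ends) is correct and worth spelling out; the paper leaves it implicit. Also note a small slip: $U$ should be $w^{-1}([\rho,\infty))$, not $w^{-1}([\rho,1])$, otherwise $w^{-1}(1)$ would be a further boundary component when $r\circ u$ exceeds $1$.

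The substantive problem is that your chain $0\leq\int_U d\alpha=\int_C\alpha\leq 0$ does not close, because with your choice $\alpha = u^*\theta - (h\circ r\circ u)\gamma_r$ the boundary estimate $\int_C\alpha\leq 0$ is \emph{not} an independently derivable fact. Carrying out the decomposition you sketch (using $(du-Y_r)^{0,1}=0$, $\theta=-dr\circ J_r$, and $dr\circ Y_r=0$), one finds on $C$
\begin{equation*}
\alpha|_C = d(r\circ u)\circ(-j)+\bigl(a-h(\rho)\bigr)\gamma_r,\qquad a:=h'(1),
\end{equation*}
so that
\begin{equation*}
\int_C\alpha = \int_C d(r\circ u)\circ(-j)\;+\;\bigl(a-h(\rho)\bigr)\int_C\gamma_r.
\end{equation*}
The first term is $\leq 0$ and $\int_C\gamma_r=\int_U d\gamma_r\leq 0$, but the coefficient $a-h(\rho)$ has uncontrolled sign: $h$ is only assumed nondecreasing, not convex, so $h(1-\epsilon)$ (and hence $h(\rho)$) may well exceed $a=h'(1)$. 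In that case $(a-h(\rho))\int_C\gamma_r\geq 0$ and the estimate fails as stated. Indeed, $\int_C\alpha=\int_Ud\alpha=E(u|_U)+\int_U(-h(r\circ u))\,d\gamma_r\geq E(u|_U)\geq 0$, so proving $\int_C\alpha\leq 0$ is equivalent to the conclusion $E(u|_U)=0$ itself; it cannot be established as an independent intermediate step.

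The fix, which is what the paper's proof does in effect, is to replace $h\circ r\circ u$ by its \emph{linearization} $a\,(r\circ u)$, i.e.\ take $\alpha':=u^*\theta-a(r\circ u)\gamma_r$. One still has $d\alpha'=\tfrac12\norm{du-Y_r}^2\,\mathrm{dvol}-a(r\circ u)\,d\gamma_r\geq 0$ (since $a,r>0$ and $d\gamma_r\leq 0$), but now $\alpha'|_C=d(r\circ u)\circ(-j)+a(1-\rho)\gamma_r$, and $a(1-\rho)\int_C\gamma_r\leq 0$ holds automatically provided you choose $\rho\in(1-\epsilon,1)$ (as the paper does with its $r_0$). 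Alternatively, keep your $\alpha$ but do not split the argument into two independent inequalities: use Stokes to write $E(u|_U)=\int_C\alpha+\int_U h(r\circ u)\,d\gamma_r$, convert $\int_C\gamma_r$ to $\int_U d\gamma_r$, and combine the $d\gamma_r$-coefficients using linearity $h(r\circ u)-h(\rho)=a(r\circ u-\rho)$ to obtain the single nonnegative coefficient $a(1-\rho+r\circ u)\geq a>0$, giving $E(u|_U)\leq 0$ directly.
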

\begin{proof}
    Choose $r_0 \in (1-\epsilon,1)$ such that $u$ is transverse to $\{r=r_0\}$, and consider $S=u^{-1}(\{r\geq r_0\})$. Then $S$ is a compact Riemann surface with boundary, and $v = u|_{S}$ solves the equation
    \begin{equation}
        (dv - X_{h(r)} \otimes \gamma_r)_{J_r}^{0,1} = 0.
    \end{equation}

By Stokes theorem
\begin{align}
  0 \leq  E(v) &\defeq \frac{1}{2}\int_S \norm {(dv - X_{h(r)} \otimes \gamma_r)}^2\\
    &= \int_{S}v^*d\theta - a v^*dr \wedge \gamma_r \notag\\
    &= \int_{\partial S} \theta\circ dv - a r_0 \gamma_r + \int_S ard\gamma_r \notag\\
    &\leq \int_{\partial S}\theta\circ (dv - X_{h(r)}\otimes\gamma_r) \notag\\
    &= \int_{\partial S} \theta\circ J \circ (dv - X_H\otimes\gamma_r) \circ - j_S \notag\\
    &= \int_{\partial S} dr\circ dv \circ -j_S. \notag
\end{align}

However, if $\zeta \in T\partial S$ is positively oriented, $j\zeta$ must point inwards in $S$, so that $d(r\circ v)(j\zeta)\geq 0$. It follows that $E(v) = 0$ so that $dv = X_{h(r)}\otimes \gamma_r$ identically on $S$, and hence $u(S)\subseteq \{r\leq r_0\}$. The Lemma now follows by taking a limit $r_0 \rightarrow 1-\epsilon$.
\end{proof}

The previous lemma, combined with the energy estimate from (\ref{energy-estimate}), ensures that Gromov compactness holds. Denote by $\Mms^{\al,l}_{d,\bp,\bn}(\bx)$ the $l$-dimensional component of the moduli space $\Mms^{\al}_{d,\bp,\bn}(x_0,\dots,x_d)$.

\begin{corollary} \label{Gromov compactness}
    The moduli space $\Mms^{\al,l}_{d,\bp,\bn}(\bx)$ is a smooth manifold and admits a compactification given by 
    \begin{equation}
    \overline{\Mms}^{\al,l}_{d,\bp,\bn}(\bx) = \bigcup\ \{\Mms^{\al,l-\abs{iE(T)}}_{T,\mathbf{F},\bn}(\bx_e)\ | \ T\in \mathscr{T}(d)\}.
    \end{equation}
    Here, $\bx_e$ is a collection of Hamiltonian orbits $x_e \in \mathcal{X}_{n_e}$, one for each edge  $e\in E(T)$, which agrees with $x_0$ at the root, and with $x_i$ on  $i^{\text{th}}$-leaf for $i=1,\dots,d$.  When $l=1$, $\overline{\Mms}^{\al,1}_{d,\bp,\bn}(\bx)$ is a smooth manifold with boundary.
\end{corollary}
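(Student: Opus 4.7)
\begin{sketch}
The plan is to combine three standard ingredients of pseudoholomorphic curve theory: a transversality result to get smoothness of the open moduli space, an energy/$C^{0}$-estimate to get Gromov precompactness, and a gluing analysis to identify the boundary strata with the given nodal configurations.

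First, the smooth manifold structure on $\Mms^{\al,l}_{d,\bp,\bn}(\bx)$ follows from a parametrized implicit function theorem: at every solution $(r,u)$, the total linearization of \eqref{lollipop equation} (coupling variations of the domain $r \in \Rms^{\al}_{d,\bp,\bn}$ with variations of the map $u$) is Fredholm of index $l$, and is surjective by the consistent universal transversality established in Lemma \ref{lem:transversality}. The dimension $l$ can be read off from the index formula of Lemma \ref{index lemma} together with $\dim \Rms^{\al}_{d,\bp,\bn} = 2d+\abs{F}-3$.

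Next, I would establish precompactness. Two \`a-priori estimates are available: the energy estimate \eqref{energy-estimate}, together with the curvature bound from Corollary \ref{Hamiltonian perturbations}, gives a uniform bound on $E(u)$ in terms of the action differences and $C_{\bn}|F|$; and the preceding maximum-principle lemma confines every solution to the compact region $\{r \leq 1-\epsilon\}$ of $\widehat{X}$. Sphere bubbling is excluded because the Liouville form $\theta$ trivializes $\omega$, so there are no non-constant $J$-holomorphic spheres, and similarly disc bubbling does not arise in this closed-string setting. With bounded energy and confined image, standard Gromov compactness applies.

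The shape of the Gromov limits then has to be matched with the stratification of $\overline{\Rms}^{\al}_{d,\bp,\bn}$ by trees $T \in \mathscr{T}(d)$. A limiting configuration consists either of a domain degeneration (the underlying framed curve converges to a stratum $\Rms^{\al}_{T,\bF,\bn}$) or of energy concentration at an input/output end (a Floer cylinder breaks off at an intermediate orbit $x_{e} \in \mathcal{X}_{n_{e}}$); the two phenomena combine into a single tree-indexed stratification, which matches the given expression as $\bigcup \Mms^{\al,l-\abs{\iE(T)}}_{T,\mathbf{F},\bn}(\bx_{e})$. The matching at nodes is precisely the gluing condition of orbits on the two sides of each internal edge, and is made possible by the consistent universal choices of ends, sub-closed $1$-forms $\gamma_r$, and contact-type almost complex structures that were arranged in the previous subsections.

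The main technical step, and the one I expect to be the main obstacle, is the gluing theorem in the $l=1$ case showing that the boundary strata $\Mms^{\al,0}_{T,\bF,\bn}(\bx_{e})$ with $\abs{\iE(T)}=1$ give $\overline{\Mms}^{\al,1}_{d,\bp,\bn}(\bx)$ the structure of a smooth manifold with boundary rather than only a topological one. This is the standard pre-gluing/Newton iteration argument for Floer-type equations, applied once at each rigid node; its applicability here rests on the fact that our perturbation data were built compatibly with gluing (via the min-property \eqref{min-property} for the curvature constants $C_{\bn}$ and the inductive construction of $\gamma_r$ and $J_r$), so that approximate solutions obtained by pre-gluing genuinely satisfy \eqref{lollipop equation} up to an error that can be corrected in a unique way.
\end{sketch}
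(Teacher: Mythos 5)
Your proposal is correct and follows exactly the route the paper intends: the paper itself does not spell out a proof of Corollary \ref{Gromov compactness}, instead simply noting that the preceding maximum-principle lemma together with the energy estimate \eqref{energy-estimate} makes standard Gromov compactness apply, with smoothness coming from Lemma \ref{lem:transversality} and the boundary structure for $l=1$ from the usual gluing argument (which is why the perturbation data were built to be compatible with gluing in the first place). Your sketch fills in precisely these ingredients — transversality for the open stratum, energy plus $C^0$ confinement for precompactness, exactness of $\omega$ ruling out bubbling, matching of Gromov limits with the tree-indexed stratification, and gluing for the collar structure — so it is in essentially complete agreement with the paper's argument.
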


\subsection{The $L_{\infty}$ structure}\label{subsec: The L_infty-structure}

We now sketch the construction of the $L_{\infty}$ structure on symplectic cohomology. The details, including sign computations, are deferred to section \ref{Orientations}.

For each isolated point $\bu\in \Mms^{\al}_{d,\bp,\bn}(\bx)$, the linearized operator $D_u$ determines a linear map
\begin{equation}\label{contribution1}
    \abs{o_{\mathbf{u}}}: t^{i_1}\abs{o_{x_1}}\otimes\dots\otimes t^{i_d}\abs{o_{x_d}} \rightarrow \abs{o_{x_0}}[3-2d],
\end{equation}
where $i_j = \abs{\bp^{-1}(j)}$ for each $j\in\{1,\dots, d\}$. By adding the contributions of all isolated points $u \in \Mms^{\al}_{d,\bp,\bn}(\bx)$ for all possible $\bp$, $\bn$, and $\bx$, we obtain linear maps
\begin{equation}\label{0-th coeff}
    \tilde{\ell}_0^d: SC^*(X) ^{\otimes d}\rightarrow \bigoplus_{n=1}^{\infty} CF^*(X,H_{n}).
\end{equation}
Note that when $\bp$ is not injective, the moduli space $\Mms^{\al}_{d,\bp,\bn}(\bx)$ has an overall zero contribution, see Lemma \ref{lem:gradsym}.
The map \eqref{0-th coeff} admits a unique extension
\begin{equation}
    \tilde{\ell}^d: SC^*(X)^{\otimes d} \rightarrow SC^*(X)\ [3-2d]
\end{equation}
which commutes with $\partial_t$, see Lemma \ref{lem: commutes with partial-t}. By carefully examining the boundary strata of the Gromov compactifications of $1$-dimensional components of the moduli spaces $\Mms^{\al}_{d,\bp,\bn}(\bx)$, one sees that the maps $(\tilde{\ell}^d)_{d\geq 1}$ satisfy the $L_{\infty}$ relations
\begin{equation}\label{L-infinity equation}
    \sum_{\substack{1 \le j \le d\\ \sigma\in \Unsh(j,d)}} (-1)^\epsilon\tilde{\ell}^{d-j+1}(\tilde{\ell}^{j}(x_{\sigma(1)},\dots,x_{\sigma(j)}),x_{\sigma(j+1)},\dots,x_{\sigma(d)}) = 0.
 \end{equation}
We explain this equation in more detail, including the signs, in Section \ref{Orientations}.

The reader may notice that $\tilde{\ell}^1$ is not exactly the differential we previously defined for $SC^*(X)$: it is missing the term $-x'$ from \eqref{eq:dSC}, without which it does not compute the correct cohomology. To remedy this, define
\begin{equation}\label{eq:modified-elld}
    \ell^{d}(x_1,\dots,x_d) =
    \begin{cases*}
        \tilde{\ell}^d (x_1,\dots,x_d) & if $d\geq 2$\\
        \tilde{\ell}^1(x_1) - \partial_t x_1 & if $d = 1$.
    \end{cases*}
\end{equation}
These new operations also satisfy the $L_{\infty}$ relations, as we explain in Section \ref{Orientations}.

\subsection{The action filtration}
Each $1$-periodic orbit $x\in \mathcal{X}_n$ appearing in the construction of $SC^*(X)$ has an associated action
\begin{equation}
    A(x) = -\int_{S^1} x^*\theta + \int_{S^1} H_n(x(t)) dt.
\end{equation}
The differential $\ell^1 = d^{SC}$ respects the action filtration
\begin{equation}
    SC_{>A}(X) \subseteq SC(X),
\end{equation}
where $SC_{>A}(X)$ is the analogue of \eqref{telescope complex} which only uses $1$-periodic orbits of action greater than $A$. However, the higher $L_{\infty}$ operations do not respect the action filtration. For that reason, we define a quasi-isomorphic $L_{\infty}$ subalgebra
\begin{equation}
    SC_{\nu}(X) = \bigoplus_{n\geq\nu} CF^*(X,H_n)[t]
\end{equation}
for each positive integer $\nu \in \mathbb{N}$. This now carries a shifted filtration
\begin{equation}
    F_{\nu}^{>A}SC_{\nu}(X) = \bigoplus_{n\geq \nu} CF_{>A+\delta_{\nu}}^*(X,H_n)[t],
\end{equation}
where 
\begin{equation}
    \delta_{\nu} = - 2\widetilde{C}_{\nu} .
\end{equation}
Note that the shifts $\delta_{\nu}$ are positive real numbers, and that $\lim \delta_{\nu} = 0$.

\begin{lemma}
    The $L_{\infty}$ operations on $SC_{\nu}(X)$ respect the shifted action filtration
    \begin{equation}
        \ell^d:F_{\nu}^{>A_1}SC_{\nu}(X)\otimes\dots\otimes F_{\nu}^{>A_d}SC_{\nu}(X) \rightarrow F_{\nu}^{>A_1+\dots+A_d}SC_{\nu}(X).
    \end{equation}
\end{lemma}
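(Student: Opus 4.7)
The plan is to combine the a priori energy estimate \eqref{energy-estimate} with two key facts: monotonicity of the constants $\widetilde{C}_n$ in $n$, and the bound $|F|\leq d$ forced by the nilpotency $t^2=0$. I would first reduce to the following pointwise claim: for any isolated $u\in \Mms^{\al}_{d,\bp,\bn}(x_0,\ldots,x_d)$ contributing to $\ell^d$, with all weights $n_j\geq\nu$, if $A_{H_{n_i}}(x_i)>A_i+\delta_\nu$ for $i=1,\ldots,d$ then $A_{H_{n_0}}(x_0)>A_1+\cdots+A_d+\delta_\nu$. Because $F_\nu^{>A}SC_\nu(X)$ is generated by orbits (and $t$ times orbits) with action strictly above $A+\delta_\nu$, this reduction is immediate. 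The estimate \eqref{energy-estimate} together with $E(u)\geq 0$ gives
\[ A_{H_{n_0}}(x_0) \geq \sum_{i=1}^d A_{H_{n_i}}(x_i) + C_{\bn}\,|F|. \]

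Next, I would bound the two correction terms. For $d\geq 2$, the sequence $\widetilde{C}_n$ is nondecreasing in $n$: this follows because the $C_n$ are nondecreasing and nonpositive, so $C_n(y)/n$ grows weakly with $n$. Hence $n_j\geq\nu$ implies $C_{\bn}=\min_j \widetilde{C}_{n_j}\geq \widetilde{C}_\nu=-\delta_\nu/2$. For $|F|$: an input $t^{i_j}|o_{x_j}|\in SC_\nu(X)$ is nonzero only when $i_j\in\{0,1\}$ since $t^2=0$ (contributions with non-injective $\bp$ vanish, cf.\ Lemma \ref{lem:gradsym}), and by definition $i_j=|\bp^{-1}(j)|$, so $|F|=\sum_j i_j\leq d$. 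Combining these with the hypothesis $A(x_i)>A_i+\delta_\nu$ yields
\[ A_{H_{n_0}}(x_0) > \sum A_i + d\delta_\nu + C_\bn|F| \geq \sum A_i + d\delta_\nu - \tfrac{\delta_\nu}{2}\cdot d = \sum A_i + \tfrac{d}{2}\delta_\nu \geq \sum A_i + \delta_\nu \]
for $d\geq 2$, as desired.

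The case $d=1$ must be handled separately: by convention $C_\bn=0$, so both the Floer differential and the continuation contributions to $\tilde\ell^1$ satisfy $A(x_0)\geq A(x_1)$ and hence preserve the (shifted) filtration, while the correction $-\partial_t$ in \eqref{eq:modified-elld} just strips the formal variable $t$ and preserves the underlying orbit's action.

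The main obstacle is really the bookkeeping in the $d\geq 2$ case: one must check that the definition $\delta_\nu=-2\widetilde{C}_\nu$ is tight enough that the curvature loss $C_\bn|F|$ gets absorbed by the $d$ factors of $\delta_\nu$ acquired from the input filtration. The bound $|F|\leq d$ together with $|C_\bn|\leq \delta_\nu/2$ gives exactly the $d\delta_\nu/2$ slack needed. Everything else is routine, provided one has already set up \eqref{energy-estimate} and verified the claimed monotonicity of $\widetilde{C}_n$.
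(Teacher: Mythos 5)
Your proof is correct and follows essentially the same route as the paper: reduce to a pointwise inequality on a contributing moduli space, use the a priori energy estimate \eqref{energy-estimate} together with $E(u)\ge 0$, bound $C_{\bn}$ from below by $\widetilde C_\nu = -\delta_\nu/2$ via monotonicity of $\widetilde C_n$, and bound $\abs{F}\le d$ using the vanishing of contributions from non-injective $\bp$ (Lemma \ref{lem:gradsym}); the arithmetic $\tfrac{d}{2}\delta_\nu \ge \delta_\nu$ for $d\ge 2$ is the same as the paper's $2(d-1)\ge d\ge \abs{F}$. The only difference is cosmetic: you spell out explicitly why $\widetilde C_n$ is nondecreasing (the paper asserts this in passing before Corollary \ref{Hamiltonian perturbations}), and your $d=1$ case is slightly more explicit in noting that the $-\partial_t$ correction from \eqref{eq:modified-elld} preserves action.
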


\begin{proof}
    The proof amounts to showing that whenever a moduli space $\Mms^{\al}_{d,\bp,\bn}(\bx)$ contributes to the $L_{\infty}$ operations, we have
    \begin{equation}
        A(x_0) -\delta_{\nu} \geq \sum_{i=1}^d (A(x_i) -\delta_{\nu}).
    \end{equation}
    Equivalently, we need to show that
    \begin{equation}
        (d-1)\delta_{\nu} \geq -A(x_0) + \sum_{i=1}^d A(x_i).
    \end{equation}
    If $d=1$, this is obviously true. If $d\geq 2$, then using the energy estimate \eqref{energy-estimate}, it suffices to show that
    \begin{equation}
        (d-1)\delta_{\nu} \geq -C_{\bn}\abs{F}.
    \end{equation}
    The latter is true when $\abs{F}\leq d$ because
    \begin{equation}
        2(d-1)\geq d \geq \abs{F}\quad\text{and}\quad -\widetilde{C}_\nu\geq -C_{\bn}\geq 0.
    \end{equation}
    The case when $\abs{F}> d$ is irrelevant since the total contribution of $\Mms^{\al}_{d,\bp,\bn}(\bx)$ is $0$, see Lemma \ref{lem:gradsym}.
\end{proof}

\section{Wrapped Floer theory}\label{sec:wrapped}

\subsection{Lagrangian Floer theory}

Recall that a Lagrangian submanifold $L\subseteq X$ is said to be cylindrical if $\theta\vert_{L}\in \Omega^1(L)$ is an exact form with compact support in $L\cap\{r\leq 1-\epsilon\}$. We will \emph{always} assume that our Lagrangians are cylindrical, and that they carry an anchored brane structure as described in \S \ref{subsection:grading datum}. Fix a basic sequence $\bH'$ of Hamiltonians on $X$.

Let $(L_0,L_1)$ be a pair of cylindrical Lagrangians such that every $w$-chord $y\in \mathfrak{X}_w(L_0,L_1)$ is non-degenerate, where
\begin{equation}\label{chord-equation}
    \mathfrak{X}_w(L_0,L_1)= \{ y:[0,1]\rightarrow X\ \text{s.t}\ y(0)\in L_0, y(1)\in L_1 \ |\ \dot{y}(t) = X_{H'_{w}}(y(t)) \}.
\end{equation}
As before, each non-degenerate $y\in \mathfrak{X}_w(L_0,L_1)$ has an associated normalized orientation line $\abs{o_y}$. Floer's Lagrangian complex is 
\begin{equation}\label{Lagrangian-Floer-complex}
CF^*(L_0,L_1;H'_w) = \bigoplus_{y\in \mathfrak{X}_w(L_0,L_1)} \abs{o_y}.
\end{equation}
The differential counts solutions $u:\mathbb{R}\times [0,1] \rightarrow X$ of the pseudo-holomorphic equation
\begin{equation}\label{Lagrangian Floer trajectories}
    \begin{cases}
       & \partial_s u + J_{w} (\partial_t u - X_{H'_w}(u)) = 0,\\
       & u(-,0)\in L_0,\ u(-,1)\in L_1,\\
       & E(u)\defeq \int \abs{\partial_s u}^2 < +\infty,
    \end{cases}
\end{equation}
where $J_w$ is an appropriate choice of $t$-dependent almost complex structure. For each pair $y_-,y_+\in \mathfrak{X}_w(L_0,L_1)$ of \emph{distinct} Hamiltonian chords, let
\begin{equation}
    \Mms^\disc(y_-,y_+) = \{ u \text{ satisfies } \eqref{Lagrangian Floer trajectories} \text{ s.t. } \lim_{s\to \pm \infty} u(s,t) = y_\pm \}/\mathbb{R}.
\end{equation}
The differential on Floer's complex \eqref{Lagrangian-Floer-complex} is given by
\begin{equation*}
    d{y_+} = \sum_{y_-\neq y_+} \# \Mms^\disc(y_-,y_+) \cdot y_-,
\end{equation*}
where $\# \Mms^\disc(y_-,y_+)$ is the (signed) count of isolated points.

We can also define continuation chain maps
\begin{equation}\label{Lagrangian continuation map}
    \kappa: CF^*(L_0,L_1;H'_w) \to CF^*(L_0,L_1;H'_{w+1})
\end{equation}
by counting solutions $u:\mathbb{R}\times [0,1] \rightarrow X$ of the pseudo-holomorphic equation
\begin{equation}\label{Lagrangian continuation equation}
    \begin{cases}
       & \partial_s u + J_{s,t} (\partial_t u - X_{H'_{s,t}}(u)) = 0,\\
       & u(-,0)\in L_i,\ u(-,1)\in L_j,\\
       & E(u)\defeq \int \abs{\partial_s u}^2 < +\infty,
    \end{cases}
\end{equation}
where now $(J_{s,t},H'_{s,t})$ is a domain-dependent family interpolating between $(J_{w},H'_{w})$ (at $s\to +\infty$) and $(J_{w+1},H'_{w+1})$ (at $s\to -\infty$). 
For each pair $y_-,y_+\in \mathfrak{X}_w(L_0,L_1)$ of (not necessarily distinct) Hamiltonian chords, let
\begin{equation}
    \Mms^\al_c(y_-,y_+) = \{ u \text{ satisfies } \eqref{Lagrangian continuation equation} \text{ s.t. } \lim_{s\to \pm \infty} u(s,t) = y_\pm \}.
\end{equation}
The continuation map \eqref{Lagrangian continuation map} is
\begin{equation}
    \kappa(y_+) = \sum_{y_-} \# \Mms^\al_c(y_-,y_+) \cdot y_-,
\end{equation}
where $\# \Mms^\al_c(y_-,y_+)$ is the signed count of isolated points.

The wrapped Floer complex is the homotopy direct limit of the continuation maps \eqref{Lagrangian continuation map},
\begin{equation}\label{wrapped Floer complex}
    \matheu{W}(L_0,L_1) = \bigoplus_{w=1}^{\infty} CF^*(L_0,L_1;H'_w)[t].
\end{equation}
As before, $t$ is a formal variable of degree $-1$ such that $t^2 = 0$. The differential on this complex is
\begin{equation}\label{Wrapped-floer-complex-differential}
    \mu^1(y+ty') = \mu^1(y) -t\mu^1(y') + \kappa(y')-y' .
\end{equation}
Its cohomology is denoted by $HW^*(L_0,L_1)$, and it is called wrapped Floer cohomology. It is independent of the choice of the basic sequence $\bH'$.

\subsection{The wrapped Fukaya category}
Fix a countable collection $\mathbf{\mathcal{L}} = (L_i)_{i\in I}$ of cylindrical Lagrangians, and let $\bH'$ be a \emph{generic} basic sequence so that:
\begin{itemize}
    \item[-] For all distinct $i,j \in I$ and all integers $w\geq 0$, $L_i$ and $\phi^{X_{H'_w}}(L_j)$ intersect transversely.
    \item[-] There are no triple intersections amongst distinct Lagrangians of the form $\phi^{X_{H_{w_k}}}(L_i)$ with $w_k\geq 0$ and $i\in I$.
\end{itemize}

Consider a tuple of cylindrical Lagrangians $L_0,\dots,L_k \in \mathcal{L}$, a disc $r = [(C,\zeta_0,\dots,\zeta_k,z_1,\dots,z_d),\psi] \in \Rms^\disc_{d,k,\bp,\bw}$, and maps $u:\Sigma_r \rightarrow X$ in the complement $\Sigma_r = C\backslash \{\zeta_0,\dots,\zeta_k,z_1,\dots,z_d\}$ which satisfy Floer's equation,
\begin{equation}\label{popsicle equation}
    \begin{cases}
        & (du-Y_r)_{J_r}^{0,1} = 0,\\
        & E(u) \defeq \int \abs{du - Y_r}^2 < +\infty,\\
        & u((\partial\Sigma_r)_i) \subseteq L_i.
    \end{cases}
\end{equation}
The boundary components of $\partial\Sigma_r$ have been labelled by $i\in\{0,\dots,k\}$ so that $(\partial\Sigma_r)_i$ originates from $\zeta_i$ with respect to the boundary orientation. As in \S \ref{Hamiltonian Floer theory}, $Y_r$ and $J_r$ are appropriate choices of domain-dependent Hamiltonian perturbations and almost complex structures.

Write the tuple of weights as $\bw = (w_0,\dots,w_k,n_1,\dots,n_d)$, let $\bx = (x_1,\dots,x_d)$ be a collection of $1$-periodic $\bH$-orbits such that $x_1\in \mathcal{X}_{n_1},\dots, x_d\in\mathcal{X}_{n_d}$, and let $\by = (y_0,\dots,y_k)$ be a collection of $\bH'$-chords such that $y_0\in \mathfrak{X}_{w_0}(L_0,L_k), y_1\in \mathfrak{X}_{w_1}(L_0,L_1),\dots, y_k\in \mathfrak{X}_{w_k}(L_{k-1},L_k)$. Then, we have a moduli space $\Mms^\disc_{r}(\bx,\by)$ consisting of solutions to \eqref{popsicle equation} which satisfy the constraints
\begin{equation}
    \begin{cases}
        & \lim_{s\rightarrow \pm\infty} \epsilon_{\zeta_i}^*u(s,t) = y_{i}, \\
        & \lim_{s\rightarrow \pm\infty} \epsilon_{z_i}^*u(s,t) = x_{i}.
    \end{cases}
\end{equation}

The family version allows for $r = [(C,\zeta_0,\dots,\zeta_k,z_1,\dots,z_d),\psi]$ to vary in its moduli space,
\begin{equation}
    \Mms^\disc_{d,k,\bp,\bw}(\bx,\by) = \{ (r,u) \ \vert \ r\in \Rms^\disc_{d,k,\bp,\bw} \ \text{and} \ u \in \Mms^\disc_{r}(\bx,\by) \}.
\end{equation}

We also use the notation
\begin{equation}
    \Mms^\disc_{k,\bp,\bw}(\by) \defeq \Mms^\disc_{0,k,\bp,\bw}(\emptyset,\by).
\end{equation}

Following the same approach of \S \ref{subsec:Transversality and compactness}, we can produce a consistent universal choice of contact type almost complex structures $J_r$, and Hamiltonian perturbations $K'_r = H'_r\otimes \gamma_r$ such that the moduli spaces $\Mms^\disc_{k,\bp,\bw}(\by)$ are smooth finite-dimensional manifolds. The only modification we need to account for is that now $\gamma_r$ is required to vanish when restricted to the boundary $\partial \Sigma_r$, see \cite[\S 2.5 , \S 2.6]{open-string-analogue}.

Its Gromov compactification is
\begin{equation}
    \overline{\Mms}^\disc_{k,\bp,\bw}(\by) = \bigcup\ \{\overline{\Mms}^\disc_{T,\bF,\bw}(\by_e) \ |\ T\in \mathscr{T}^{\ord}(k) \}.
\end{equation}

Similarly, the same approach of \S \ref{subsec: The L-infty relations} shows that the count of isolated points in $\Mms^\disc_{k,\bp,\bw}(\by)$ produces a linear map
\begin{equation}
     CF^*(L_{0},L_{1};H'_{w_1})[t] \otimes \dots \otimes  CF^*(L_{k-1},L_k;H'_{w_k})[t]\rightarrow CF^*(L_{0},L_{k}; H'_{w_0}).
\end{equation}
These maps can be uniquely extended to
\begin{equation}
   \tilde{\mu}^k: \matheu{W}(L_0,L_1)\otimes\dots\otimes \matheu{W}(L_{k-1},L_{k}) \rightarrow \matheu{W}(L_0,L_k)\ [2-d]
\end{equation}
which are $\partial_t$-equivariant (see \eqref{eq:delt_eq} for the precise formulation of what this means). See \S \ref{subsec: The L-infty relations} for a geometric interpretation of this extension.

By examining the Gromov compactification of $1$-dimensional moduli spaces, one sees that the maps $\tilde{\mu}^{k}$ satsify the $A_{\infty}$ relations. Again, the differential $\tilde{\mu}^{1}$ is different from the one we have previously defined. Let
\begin{equation}
    \mu^{k}(y_1,\dots,y_k) =
    \begin{cases*}
        \tilde{\mu}^k (y_1,\dots,y_k) & if $k\geq 2$\\
        \tilde{\mu}^1(y_1) - \partial_t y_1 & if $k = 1$.
    \end{cases*}
\end{equation}
Then, the operations $\mu^k$ still satisfy the $A_{\infty}$ relations, and $(\matheu{W},\mu^k)$ is called the wrapped Fukaya category on the objects $(L_i)_{i\in I}$.

\subsection{The closed open map}\label{subsec:co}

The $A_{\infty}$ category $\matheu{W}$ has an associated Hochschild co-chain complex
\begin{equation}
    \CC^*(\matheu{W}) = \prod_{L_0,\dots,L_k \in \mathcal{L}}\hom_{\bk}(\matheu{W}(L_0,L_1)\otimes\dots\otimes\matheu{W}(L_{k-1},L_k),\matheu{W}(L_0,L_k)[-k]).
\end{equation}
The Hochschild complex comes with a Gerstenhaber product
\begin{align*}
    (\phi\circ \psi)^k (a_k,\dots,a_1) &\defeq \sum_{i,j} (-1)^{\dagger} \phi^{k-j+1}(a_{k},\dots,\psi^{j}(a_{i+j},\dots,a_{i+1}), \dots, a_1),\quad\text{where}\\
\dagger &= (\deg \psi - 1)(\sum_{l=1}^i \deg a_l - i).
\end{align*}
     It satisfies $\deg \phi \circ \psi = \deg \phi + \deg \psi - 1$, and can be used to endow $CC^*(\matheu{W})[1]$ with a graded Lie algebra structure
\begin{equation}
    [\phi,\psi] = \phi \circ \psi - (-1)^{(\deg \psi -1)(\deg \phi-1)} \psi\circ \phi.
\end{equation}

Our discussion so far does not involve the $A_{\infty}$ structure on $\matheu{W}$. In fact, the latter is the same as the choice of an element $\mu \in \CC^2(\matheu{W})$ such that $\mu^0 = 0$ and $\mu\circ\mu = 0$. Moreover, an $A_{\infty}$ structure on $\matheu{W}$ can be used to define a differential on the Hochschild complex,
\begin{equation}
    \partial \phi = [\mu,\phi].
\end{equation}
This turns $\CC^*(\matheu{W})[1]$ into a differential graded Lie algebra (dgLa). Its cohomology is known as Hochschild cohomology
\begin{equation}
    HH^*(\matheu{W}) = H^*(\CC^*(\matheu{W}),\partial).
\end{equation}

We now observe that a dgLa defines an $L_\infty$ algebra in the sense of \cite{LadaMarkl}, by setting 
$$\ell^1_{\mathrm{LM}}(x) = \partial x,\qquad \ell^2_{\mathrm{LM}}(x,y) = [x,y], \qquad \ell^{\ge 3}_{\mathrm{LM}} = 0.$$
Thus we may define an $L_\infty$ structure in our sense using equation \eqref{eq:LM-trans}. 
In fact the maps $\CO^d$ define an $L_\infty$ homomorphism to the negation of this $L_\infty$ structure (it is clear that if $\ell^d$ satisfy the $L_\infty$ relations then so do $-\ell^d$, as the relations are quadratic in the $\ell^*$). 
Explicitly, this negated $L_\infty$ structure on $\CC^*(\matheu{W})$ is defined by:
\begin{equation}
    \label{eq:L-inf-cc}
\ell^1(\phi) = -\partial \phi,\qquad \ell^2(\phi,\psi) = (-1)^{1+\deg \phi} [\phi,\psi],\qquad \ell^{\ge 3} = 0. 
\end{equation}

More generally, one can construct Hamiltonian perturbations and contact type almost complex structures on $\Sigma_r$ for all $r\in \Rms^\disc_{d,k,\bp,\bw}$ so that $\Mms^\disc_{d,k,\bp,\bw}(\bx,\by)$ are smooth finite dimensional manifolds with Gromov compactifications
\begin{equation}
    \overline{\Mms}^\disc_{d,k,\bp,\bw}(\bx,\by) = \bigcup\ \{\overline{\Mms}^\disc_{T,\bF,\bw}(\bx_e,\by_e) \ |\ T\in \mathscr{T}^{\cl}(d,k)\}.
\end{equation}

The count of isolated points in $\Mms^\disc_{d,k,\bp,\bw}(\bx,\by)$ produces a linear map
\begin{equation}\label{co contributions}
   co_{d,k,\bp,\bw}:t^{i_1}\abs{o_{x_1}}\otimes \dots t^{i_d}\abs{o_{x_d}} \otimes t^{j_1}\abs{o_{y_1}} \otimes \dots \otimes t^{j_k}\abs{o_{y_k}}   \rightarrow \abs{o_{y_0}}[2-2d-k],
\end{equation}
where  $i_p = \abs{\bp^{-1}(p)}$ and $j_p = \abs{\bp^{-1}(d+p)}$. The sum of all the contributions \eqref{co contributions} produces a linear map
\begin{equation}\label{eq:cokd}
    co_{d,k}: SC^*(X)^{\otimes d}\otimes \matheu{W}(L_0,L_1) \otimes\dots\otimes\matheu{W}(L_{k-1},L_k) \rightarrow \bigoplus_{w=1}^{\infty} CF^*(L_0,L_k;H'_w)[2-2d-k].
\end{equation}
This map is graded symmetric in the inputs from $SC$, in the sense that
\begin{equation}
    \label{eq:cokd_symm}
    co_{d,k}(x_{\sigma(1)},\ldots,x_{\sigma(d)};y_1,\ldots,y_k) = (-1)^\epsilon co_{d,k}(x_{1},\ldots,x_{d};y_1,\ldots,y_k).
\end{equation}
It can be extended uniquely to a $\partial_t$-equivariant map
\begin{equation}
    co_{d,k}: SC^*(X)^{\otimes d}\otimes \matheu{W}(L_0,L_1) \otimes\dots\otimes\matheu{W}(L_{k-1},L_k) \rightarrow \matheu{W}(L_0,L_k),
\end{equation}
where $\partial_t$-equivariance means
\begin{multline}\label{eq:delt_eq}
        \partial_t co_{d,k}(x_1,\ldots,x_d;y_1,\ldots,y_k) = \sum_{j=1}^d (-1)^\dagger co_{d,k}(x_1,\ldots,\partial_t x_j,\ldots,x_d;y_1,\ldots,y_k) +\\
        \sum_{j=1}^k (-1)^* co_{d,k}(x_1,\ldots,x_d;y_1,\ldots,\partial_t y_j,\ldots,y_k)
\end{multline}
where
\begin{align*}
    \dagger &= 1+{\sum_{l=1}^{j-1}|x_l|},\\
    * &= 1+{\sum_{l=1}^d |x_l| + \sum_{l=1}^{j-1}|y_l|'}.
\end{align*}
The extension remains graded symmetric.

Putting the maps $co_{d,k}$ together, we obtain maps
\begin{equation}
    \CO^{d} : SC^*(X)^{\otimes d} \rightarrow \CC^*(\matheu{W})[2-2d].
\end{equation}
By definition, the element $\CO^0 \in \CC^2(\matheu{W})$ coincides with the element $\tilde \mu$. 

By examining the boundary strata of the Gromov compactification, one sees that we have algebraic relations
\begin{multline}\label{eq:CO-rel}
    \sum_{\substack{1 \le j \le d\\ \sigma \in \Unsh(j,d)}} (-1)^{\epsilon} co_{d-j+1,k}(\tilde \ell^j(x_{\sigma(1)},\ldots,x_{\sigma(j)}),x_{\sigma(j+1)},\ldots,x_{\sigma(d)};y_1,\ldots,y_k) + \\
    \sum_{\substack{0 \le j \le d \\ 0\le k_1 < k_2 \le k \\
    \sigma \in \Unsh(j,d)}} (-1)^\dagger co_{d-j+1,k_1}(x_{\sigma(1)},\ldots,x_{\sigma(j)};y_1,    \ldots,y_{k_1},co_{j,k_2-k_1}(x_{\sigma(j+1)},\ldots \\
    \ldots ,x_{\sigma(d)};y_{k_1+1},\ldots,y_{k_2}),y_{k_2+1},\ldots,y_k) = 0,
\end{multline}   
where 
$$\dagger = \epsilon + \left(1+\sum_{i=j+1}^d |x_{\sigma(i)}| \right)\left(\sum_{i=1}^{k_1} |y_i|'\right) + \sum_{i=1}^j |x_{\sigma(i)}| .$$
We refer to Section \ref{Orientations} for a brief discussion of the signs. 
Note that for $d=0$, this reduces to the $A_\infty$ relations $\tilde \mu \circ \tilde \mu = 0$.

By $\partial_t$-equivariance of $co_{d,k}$, we find that \eqref{eq:CO-rel} continues to hold if we replace $\tilde \ell$ with $\ell$ and $co_{0,k} = \tilde \mu^k$ with $\mu^k$. 
From the definition of the Gerstenhaber bracket, this can be rewritten as
\begin{multline}\label{eq:CO-rel-2}
    \sum_{\substack{1 \le j \le d\\ \sigma \in \Unsh(j,d)}} (-1)^{\epsilon} \CO^{d-j+1}(\ell^j(x_{\sigma(1)},\ldots,x_{\sigma(j)}),x_{\sigma(j+1)},\ldots,x_{\sigma(d)}) +  [\mu , \CO^d(x_1,\ldots,x_d)] + \\
    \sum_{\substack{1 \le j \le d-1\\ \sigma \in \Unsh(j,d) \\ \sigma(1)<\sigma(j+1)}} (-1)^{\epsilon+\sum_{i=1}^j|x_{\sigma(i)}|} \left[ \CO^j(x_{\sigma(1)},\ldots,x_{\sigma(j)}),\CO^{d-j}(x_{\sigma(j+1)},\ldots,x_{\sigma(d)})\right] = 0.
\end{multline}   

This equation (together with graded symmetry of the maps $\CO^d$) means that
\begin{equation}\label{CO map}
    \CO : (SC^*(X,\bH)[1], \ell^*) \rightarrow (\CC^*(\matheu{W})[1],\partial,[-,-]).
\end{equation}
is an $L_{\infty}$ morphism. 

To establish the connection with the conventional notion of $L_\infty$ morphism, we recall the definition. 
An $L_\infty$ morphism from one $L_\infty$ algebra $(V_0,\ell_0^d)$ to another $(V_1,\ell_1^d)$ consists of maps $F^d: V_0^{\otimes d} \to V_1[2-2d]$ for $d \ge 1$, which are graded symmetric in the sense that
    $$F^d(x_{\sigma(1)},\ldots,x_{\sigma(d)}) = (-1)^\epsilon F^d(x_1,\ldots,x_d),$$
    and furthermore satisfy
    \begin{multline}
        \sum_{\substack{1 \le j \le d\\ \sigma \in \Unsh(j,d)}} (-1)^\epsilon F^{d-j+1}(\ell_0^j(x_{\sigma(1)},\ldots,x_{\sigma(j)}),x_{\sigma(j+1)},\ldots,x_{\sigma(d)}) = \\
        \sum_{\substack{1 \le j_1 < j_2 < \ldots < j_k = d\\ \sigma \in \Unsh^<(j_1,\ldots,j_k)}} (-1)^\epsilon \ell_1^k(F^{j_1}(x_{\sigma(1)},\ldots,x_{\sigma(j_1)}),F^{j_2-j_1}(\ldots),\ldots,F^{j_k-j_{k-1}}(\ldots,x_{\sigma(d)})).
    \end{multline}
    Here $\Unsh(j_1,\ldots,j_k)$ is the set of $(j_1,\ldots,j_k)$-unshuffles, that is, permutations $\sigma \in \mathfrak{S}_{j_k}$ which satisfy
    $$\sigma(j_{i-1}+1) < \sigma(j_{i-1}+2) < \ldots < \sigma(j_i) \qquad \text{for all $1 \le i \le k$ (where $j_0 = 0$ by convention);}$$
    and $\Unsh^{<}(j_1,\ldots,j_k) \subset \Unsh(j_1,\ldots,j_k)$ is the subset of unshuffles such that
    $$\sigma(1)<\sigma(j_1+1) < \sigma(j_2+1) < \ldots < \sigma(j_{k-1}+1).$$

This is compatible with the conventional definition: if $F^d$ is an $L_\infty$ morphism in our sense, then we can define $F^d_{\mathrm{LM}}$ by the straightforward analogue of the formula \eqref{eq:LM-trans}. 
Then $F^d_{\mathrm{LM}}$ will be an $L_\infty$ morphism from $(V_0,\ell^d_{0,\mathrm{LM}})$ to $(V_1,\ell^d_{1,\mathrm{LM}})$ in the sense of \cite[Definition 2.3]{Allocca}.

It is now straightforward to check that the maps $\CO^d$ define an $L_\infty$ morphism, where the $L_\infty$ structure on Hochschild cochains is given in \eqref{eq:L-inf-cc}.

\section{Orientations}\label{Orientations}

\subsection{Conventions} Throughout this section, we work with the following conventions. If $D$ is a Fredholm operator, we let $\det(D)$ be its determinant line, placed in degree $\ind(D)$. 
Similarly, if $V$ is a finite dimensional (ungraded) vector space of dimension $n$, we set
\begin{equation}
    \lambda(V) \defeq \wedge^{n}(V[-1]) \cong \wedge^{n} V [-n].
\end{equation}
Recall that given two $\mathbb{G}$-graded lines $o_{x_1}$ and $o_{x_2}$, we have a Koszul isomorphism
\begin{equation}
    o_{x_1}\otimes o_{x_2} \rightarrow o_{x_2}\otimes o_{x_1}:\quad x_1\otimes x_2 \mapsto (-1)^{\abs{x_1}\abs{x_2}} x_2\otimes x_1.
\end{equation}
More generally, we have a Koszul isomorphism for any $\sigma \in \mathfrak{S}_d$,
\begin{equation}
    o_{x_1}\otimes\dots \otimes o_{x_d} \xrightarrow{\text{Koszul}} o_{x_{\sigma(1)}}\otimes\dots\otimes o_{x_{\sigma(d)}},
\end{equation}
which differs from the tautological permutation of inputs by a sign $(-1)^{\epsilon}$, where we recall that we use the abbreviation
\begin{equation}\label{epsilon sign}
    \epsilon = \sum_{\substack{i<j \\ \sigma(i)>\sigma(j)}} \abs{x_i} \cdot \abs{x_j}
\end{equation}
throughout the paper. 
The main goal of this section is to construct maps 
\begin{equation}
    \ell^d: SC(X)^{\otimes d} \rightarrow SC(X) [3-2d]
\end{equation}
which are symmetric 
\begin{equation}\label{L-infty symmetric}
     \ell ^d(x_{\sigma(1)},\ldots,x_{\sigma(d)}) = (-1)^\epsilon \tilde \ell^d(x_1,\ldots,x_d)
\end{equation}
and satisfy the $L_\infty$ relations
\begin{equation}\label{L-infty equation}
    \sum_{\substack{1 \le j \le d\\ \sigma \in \Unsh(j,d)}} (-1)^{\epsilon} \ell^{d-j+1}(\ell^j(x_{\sigma(1)},\dots,x_{\sigma(j)}),x_{\sigma(j+1)},\dots,x_{\sigma(d)}) = 0.
\end{equation}

\subsection{Orienting the domains}\label{sec:or_dom}

Let $\bp: F \rightarrow \{1,\dots,d\}$ be a flavour on an ordered set $F$. Recall that the moduli space $\Rms^{\al}_{d,\bp}$ can be thought of as a quotient
\begin{equation}
    \Rms^\al_{d,\bp}  \cong \left(\mbox{Conf}_d(\mathbb{C}) \times \mathbb{R}^F\right)/\mbox{Aff}(\mathbb{C},\mathbb{R}_{>0}).
\end{equation}
Therefore, we get a short exact sequence
\begin{equation} \label{orientation sequence for lollipops}
    0 \to T\mbox{Aff}(\mathbb{C},\mathbb{R}_{>0}) \to T(\mathbb{C}^d \times \mathbb{R}^F) \to T\Rms^{\al}_{d,\bp}\to 0.
\end{equation}
Note that $T\mbox{Aff}(\mathbb{C},\mathbb{R}_{>0}) \cong \mathbb{C} \oplus \mathbb{R}$, where the $\mathbb{C}$-factor corresponds to translations and the $\mathbb{R}$-factor to positive scalings. Thus we have an isomorphism 
\begin{equation}
    \lambda(T\mbox{Aff}(\mathbb{C},\mathbb{R}_{>0})) \cong \lambda(\mathbb{C})\otimes T_s \cong \mathbb{R}[-2] \otimes \Ts_s,
\end{equation}
where $\Ts_s$ is a trivial $\mathbb{G}$-graded line in degree $1$. Triviality means that it is equipped with an isomorphism $\Ts_s\cong \mathbb{R}[-1]$. Similarly,
\begin{equation}
    \lambda(T(\mathbb{C}^d \times \mathbb{R}^F)) \cong \mathbb{R}[-2d]\otimes \bigotimes_{f\in F} \Ts_f,
\end{equation}
where $\Ts_f$ is a trivial $\mathbb{G}$-graded line in degree $1$ for each $f\in F$. It follows from the short exact sequence \eqref{orientation sequence for lollipops} that 
\begin{equation}\label{orientation of domains}
    \lambda(T\Rms^{\al}_{d,\bp}) \cong \mathbb{R}[-2d+2]\otimes \Ts_F \otimes \Ts_s^{\vee},
\end{equation}
where $\Ts_F = \bigotimes_{f\in F} \Ts_f$. Recall that the group
\begin{equation}
    \Sym(d,\bp) \defeq \{(\sigma,\pi) \in \Sym(d) \times \Sym(F): \bp(\pi(f)) = \sigma(\bp(f))\}    
\end{equation}
acts on $\Rms^{\al}_{d,\bp}$. An element $(\sigma,\pi)$ acts on the orientation \eqref{orientation of domains} with a sign equal to the signature of the permutation $\pi \in \Sym(F)$.

Recall that the boundary strata of $\overline{\Rms}^{\al}_{d,\bp}$ are indexed by data $(T,\bF)$. The differential of the gluing map $\bar{\gamma}_{T,\bF}$ defines an isomorphism
\begin{equation}
    D\bar{\gamma}_{T,\bF}: T(\Rms^{\al}_{T,\bF}) \times T\left([0,\epsilon)^{\iE(T)}\right) \to T(\overline{\Rms}^{\al}_{d,\bp}).   
\end{equation}
Let $\{\Ts_e\}_{e \in \iE(T)}$ be $\mathbb{G}$-graded lines in degree $1$. Then, the gluing map $\bar{\gamma}_{T,\mathbf{F}}$ induces an isomorphism
\begin{equation}\label{gluing map orientation} 
    \bigotimes_{v \in V(T)} \lambda(T\Rms^{\al}_{d_v,\bp_v}) \otimes \bigotimes_{e \in \iE(T)} \Ts_e \cong \lambda(T\overline{\Rms}^{\al}_{d,\bp})
\end{equation}
which identifies $\Ts_e$ with the tangent space to the corresponding copy of $[0,\epsilon)$. Moreover, we have the following commutative diagram
\begin{equation}\label{eq:gluing_orientations}
 \begin{tikzcd}
    \Bigotimes_{v\in V(T)}\lambda(T\Rms^{\al}_{d_v,\bp_v}) \Bigotimes_{e \in \iE(T)} \Ts_e \arrow{r}{\eqref{orientation of domains}} \arrow[swap]{d}{\eqref{gluing map orientation}} & \Bigotimes_{v \in V(T)} \left(\mathbb{R}[-2d_v+2] \otimes\Ts_{F_v}\otimes\Ts_{s_v}^{\vee}\right)  \Bigotimes_{e \in \iE(T)} \Ts_e \arrow{d}{} \\
    \lambda(T\Rms^{\al}_{d,\bp}) \arrow{r}{\eqref{orientation of domains}}& \mathbb{R}[-2d+2]\otimes \Ts_F\otimes \Ts_s^{\vee}.
 \end{tikzcd}
\end{equation}
Here, the right vertical map is induced by isomorphisms 
\begin{eqnarray*}
    \Ts_f & \cong & \Ts_f \mbox{ for $f \in F_v \subseteq F$} \\
    \Ts_{s_v} & \cong & \Ts_e \mbox{ when $e \in \iE(T)$ is the incoming edge of $v \in V(T)$} \\
    \Ts_{s_v} & \cong & \Ts_s \mbox{ when $v \in V(T)$ is the root vertex}
\end{eqnarray*}
which are compatible with the given trivializations.

\subsection{The $L_{\infty}$ operations revisited}\label{subsec: L-infinity operations in detail}

We now expand on the description of the $L_{\infty}$ structure described in \S \ref{subsec: The L_infty-structure}. Let $\mathbf{n}=(n_0,\dots,n_d)$ be a set of weights, and let $\bp:F\rightarrow \{1,\dots,d\}$ and $\bq: G\rightarrow \{1,\dots,d\}$ be two flavours such that
\begin{equation}\label{flavour constraint}
    \begin{cases}
        & n_0 = n_1 + \dots + n_d + \abs{F},\\
        & \abs{\bp^{-1}(j)}\cdot \abs{\bq^{-1}(j)} = 0 \quad \text{for}  \ j=1,\dots,d.
    \end{cases}    
\end{equation}
The flavour $\bq$ is used to track the multiplicity of $t$ in the output. For each collection $\mathbf{x}=(x_0,\dots,x_d)$ of orbits $x_i \in \mathcal{X}_{H_{n_i}}$, define 
\begin{equation}
    \Mms^\al_{d,\bp,\bq,\bn}(\bx) \defeq \Mms^\al_{d,\bp,\bn}(x_0,\dots,x_d).
\end{equation}

For each regular element $\bu \defeq (r,u) \in \Mms^\al_{d,\bp,\bq,\bn}(\bx)$, we have a Cauchy-Riemann operator $D_u$ associated with $u$. It is obtained by linearizing the pseudo-holomorphic equation \eqref{lollipop equation} at $u$ while keeping $r$ constant. Using the gluing theorems explained in \cite[\S (11c)]{PL-theory}, the Fredholm operator $D_u$ satisfies the  gluing isomorphism
\begin{equation}\label{gluing iso 1}
    o_{x_1}\otimes \dots \otimes o_{x_d} \otimes \lambda(D_u) \cong o_{x_0}.
\end{equation}
Since $\bu$ is regular, we also have an isomorphism
\begin{equation}\label{tangent space iso} 
    \lambda(T_r \Rms^{\al}_{d,\bp})\otimes \lambda(D_u) \cong \lambda(T_\bu \Mms^{\al}_{d,\bp,\bq,\bn}(\bx)).
\end{equation}

Now, define trivial $\mathbb{G}$-graded lines $\Ts_f$ and $\Ts_g$ in degree $1$ for all $f \in F$ and $g \in G$ as before, and for each $i=1,\dots,d$, set $F_i := \bp^{-1}(i)$, $G_i = \bq^{-1}(i)$, and
\begin{equation}
    o^{FG}_i := \Ts_{G_i}^{\vee} \otimes  \Ts_{F_i}^{\vee} \otimes o_{x_i},\quad \text{where } \Ts_A = \bigotimes_{a\in A} \Ts_a.
\end{equation}
Note that we have a preferred isomorphism $o^{FG}_j\cong t^{i_j}o_{x_j}$, where $i_j = \abs{F_j} + \abs{G_j}$ and $t$ is a formal variable of degree $-1$. Similarly, we set $o^G_0 = T^{\vee}_G\otimes o_{x_0} \cong t^{i_0} o_{x_0}$, where $i_0 = \abs{G}$. The contribution of $\bu$ to the $L_{\infty}$ operations is an isomorphism
\begin{equation} \label{contribution of u to L-infty ops} 
    o_\bu: t^{i_1} \cdot o_{x_1} \otimes \ldots \otimes t^{i_d} \cdot o_{x_d} \xrightarrow{\cong} t^{i_0} \cdot o_{x_0}[3-2d],
    \end{equation}
which we now explain. We start with the following composition of isomorphisms
\begin{align}\label{main iso}
    o^{FG}_1\otimes\dots\otimes o^{FG}_d 
    &\xrightarrow{\text{Koszul}} T_G^{\vee}\otimes o_{x_1}\otimes\dotsi\otimes o_{x_d}\otimes T_F^{\vee} \\
    &\xrightarrow{\eqref{gluing iso 1}} T^{\vee}_G \otimes o_{x_0} \otimes \lambda(D_u)^{\vee} \otimes T_F^{\vee} \notag\\
    &\xrightarrow{\eqref{tangent space iso}} T^\vee_G \otimes o_{x_0} \otimes \lambda(T_\bu \Mms^{\al}_{d,\bp,\bq,\bn}(\bx))^{\vee} \otimes \lambda(T_r \Rms^{\al}_{d,\bp})\otimes T_F^{\vee} \notag \\
    &\xrightarrow{\eqref{orientation of domains}} T^\vee_G \otimes o_{x_0} \otimes \lambda(T_\bu \Mms^{\al}_{d,\bp,\bq,\bn}(\bx))^{\vee} \otimes T_s^{\vee} \otimes \mathbb{R}[2-2d] \notag\\
    & \xrightarrow{\text{Koszul}} T^\vee_G \otimes T_s^{\vee} \otimes o_{x_0} \otimes \lambda(T_\bu \Mms^{\al}_{d,\bp,\bq,\bn}(\bx))^{\vee} \otimes \mathbb{R}[2-2d]. \notag
\end{align}
When $\bu \in \Mms^{\al}_{d,\bp,\bq,\bn}(\bx)$ is isolated, we obtain a tautological isomorphism $\lambda(T_\bu \Mms^{\al}_{d,\bp,\bq,\bn}(\bx)) \cong \mathbb{R}[0]$ which, together with \eqref{main iso} and the given trivialization $\Ts_s \cong \mathbb{R}[-1]$, produces the contribution of $\bu$ in \eqref{contribution of u to L-infty ops}.

The sum of the normalizations of $o_\bu$ defines maps
\begin{equation}\label{lambda-map}
    \lambda^{d,\bn}_{\bp,\bq}(\bx) = \sum_{\bu \in \Mms^{\al,0}_{d,\bp,\bq,\bn}(\bx) } \abs{o_\bu}.
\end{equation}

\begin{lemma}\label{lem:gradsym}
The maps $\lambda^{d,\bn}_{\bp,\bq}(\bx)$ are graded symmetric, which means that for any permutation $\sigma \in \mathfrak{S}_d$,
\begin{equation}
    \lambda^{d,\bn}_{\bp,\bq}(\bx)(x_{\sigma(1)},\ldots,x_{\sigma(d)}) = (-1)^\epsilon \cdot \lambda^{d,\bn}_{\bp,\bq}(\bx)(x_1,\ldots,x_d),    
\end{equation}
where $\epsilon$ is the Koszul sign of the action of $\sigma$ on $(x_1,\dots,x_d)$ from \eqref{epsilon sign}. 
Furthermore, if $\abs{\bp^{-1}(i)} \geq 2$ for some $i$, then $\lambda^{d,\bn}_{\bp,\bq}(\bx) = 0$.  
\end{lemma}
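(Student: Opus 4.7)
Both assertions will follow from the equivariance of the moduli spaces and the orientation formulas under the action of the symmetry group $\Sym(d,\bp)$, combined with bookkeeping of Koszul signs in the chain of isomorphisms \eqref{main iso}. The key general fact I will use is that the action of $(\sigma,\pi) \in \Sym(d,\bp)$ on the orientation line $\lambda(T\Rms^{\al}_{d,\bp})$ is by $\mathrm{sgn}(\pi)$, as one sees from \eqref{orientation of domains} together with the fact that permuting the odd (degree-$1$) lines in $\Ts_F = \bigotimes_{f\in F}\Ts_f$ by $\pi$ contributes a Koszul sign of $\mathrm{sgn}(\pi)$.

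For the graded symmetry, I first reduce to transpositions $\sigma = (i\,i{+}1)$. Given such a $\sigma$, choose any bijection $\pi:F\to F$ with $\bp\circ\pi = \sigma\circ\bp$ (always possible if we allow relabeling of $F$), so that $(\sigma,\pi)$ induces a canonical diffeomorphism $\Mms^{\al,0}_{d,\bp,\bq,\bn}(\bx) \cong \Mms^{\al,0}_{d,\sigma_*\bp,\sigma_*\bq,\sigma_*\bn}(\sigma_*\bx)$, where $\sigma_*\bn, \sigma_*\bx$ permute the input data. Under this bijection the chain of isomorphisms \eqref{main iso} gets relabeled, and I would compare term by term. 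The Koszul swap at the first arrow in \eqref{main iso} acquires an extra factor $(-1)^{|x_i||x_{i+1}|}$; the extra factors coming from permuting $\Ts_{F_i}$ with $\Ts_{F_{i+1}}$ and $\Ts_{G_i}$ with $\Ts_{G_{i+1}}$ combine with $\mathrm{sgn}(\pi)$ acting on $\lambda(T\Rms^{\al}_{d,\bp})$ to give an overall cancellation, via the master formula \eqref{orientation of domains}. The net extra sign is therefore exactly the Koszul sign $(-1)^\epsilon$, which is the desired graded symmetry.

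For the vanishing statement, suppose $|\bp^{-1}(i)|\ge 2$ and pick $f \neq f'$ in $\bp^{-1}(i)$. The pair $(\id,(f\,f'))$ lies in $\Sym(d,\bp)$. Because our perturbation data $(\mathcal{K}_r, J_r)$ were chosen $\Sym(d,\bp)$-invariantly (Corollary \ref{Hamiltonian perturbations} and Lemma \ref{lem:transversality}), the transposition $(f\,f')$ induces an involution $\iota$ on the moduli space $\Mms^{\al,0}_{d,\bp,\bq,\bn}(\bx)$. On isolated (generic) solutions, the two sprinkles lie at distinct points of the stick $Q_i$, so $\iota$ has no fixed points. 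By the general fact recorded above, $\iota$ acts by $-1$ on the orientation of $\Rms^{\al}_{d,\bp}$ (since it is an odd permutation of the odd lines $\Ts_F$), but it acts trivially on all of $o_{x_0},o_{x_1},\ldots,o_{x_d}$ and on $\lambda(D_u)$ (since $u$ itself is unchanged). Tracing through \eqref{main iso}, this means $o_{\iota(\bu)} = -o_\bu$, and the sum \eqref{lambda-map} pairs off to zero.

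The main obstacle is the precise Koszul bookkeeping in the middle paragraph: one must verify that the three separate sign contributions (Koszul swap of $o_{x_i}$'s, permutation of $\Ts_{F_i}$'s and $\Ts_{G_i}$'s, and the $\mathrm{sgn}(\pi)$ from reparametrizing $\Rms^{\al}_{d,\bp}$) combine to give exactly $(-1)^\epsilon$ with no parity of $|F|$ or $|G|$ left over. This reduces to a careful application of the commutative diagram \eqref{eq:gluing_orientations} and the trivializations $\Ts_f \cong \R[-1]$.
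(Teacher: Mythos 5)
Your proposal takes essentially the same approach as the paper. The paper establishes a single commutative diagram relating $o_{\bu}$ and $o_{(\sigma,\pi)\cdot\bu}$ via the Koszul isomorphism for arbitrary $(\sigma,\pi)\in\Sym(d,\bp)$, and then reads off both claims by specializing $(\sigma,\pi)$; you do the same thing in two passes (first $\sigma$ a transposition with an accompanying relabeling $\pi$, then $\sigma=\id$ and $\pi=(f\,f')$). The key fact you invoke — that $(\sigma,\pi)$ acts on the orientation of $\Rms^{\al}_{d,\bp}$ by the sign of $\pi$ — is precisely what the paper records just before the lemma, at the end of \S\ref{sec:or_dom}. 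So the substance matches, and the Koszul bookkeeping you flag as the main obstacle is exactly what the paper's commutative diagram encapsulates.

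One caveat on the vanishing half. You justify fixed-point-freeness of the involution $\iota=(\id,(f\,f'))$ by a geometric claim: that for isolated solutions the two sprinkles lie at distinct points of the stick $Q_i$. This is not something the transversality statement (Lemma \ref{lem:transversality}) provides: it gives Fredholm regularity, but says nothing about the zero-dimensional moduli space avoiding the codimension-one locus in $\Rms^{\al}_{d,\bp}$ where two sprinkles coincide. The argument that actually works, and which is implicit in the paper's phrase ``come in cancelling pairs,'' is algebraic: if $\iota(\bu)=\bu$, the commutative diagram forces $o_\bu=-o_\bu$, hence $o_\bu=0$, which contradicts $o_\bu$ being an isomorphism of rank-one free $\Z$-modules. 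So $\iota$ is automatically free, and no transversality assertion about sprinkle positions is needed. You should replace the geometric justification with this observation.
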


\begin{proof}
Note that $\bq:G\to \{1,\ldots,d\}$ is irrelevant, so we might as well assume that $G=\emptyset$ to keep notation simple. For any $(\sigma,\pi) \in \Sym(d,\bp)$, the diagram
\begin{equation}
\begin{tikzcd}
    \Ts_{F_1} \otimes o_{x_1} \otimes \ldots \otimes \Ts_{F_d}\otimes o_{x_d} \arrow{r}{o_\bu} \arrow{d}{\text{Koszul}} &  o_{x_0}[3-2d] \arrow{d}{} \\
    \Ts_{\pi \cdot F_{\sigma(1)}} \otimes o_{x_{\sigma(1)}} \otimes \ldots \otimes \Ts_{\pi\cdot F_{\sigma(d)}} \otimes o_{x_{\sigma(d)}} \arrow{r}{o_{(\sigma,\pi)\cdot\bu}}& o_{x_0}[3-2d]
\end{tikzcd}
\end{equation}
commutes. The second statement follows by applying the commutative diagram when $\sigma = \id$ and $\pi$ is a transposition of two elements in $\bp^{-1}(i)$. Indeed, it shows that $\bu$ and $(\sigma,\pi) \cdot \bu$ contribute with opposite sign, and hence the terms in the sum defining $\lambda^{d,\bn}_{\bp,\bq}(\bx)$ come in cancelling pairs. Finally, the first statement of the lemma follows by applying the commutative diagram when $\pi = \id$.
\end{proof}

By adding all the maps $\lambda^{d,\bn}_{\bp,\bq}$ for all possible flavours $\bp$ and $\bq$, we obtain linear maps
\begin{equation}\label{n-piece}
    \tilde{\ell}^{d,\bn}: CF^*(X,H_{n_1})[t] \otimes\dotsi\otimes CF^*(X,H_{n_d})[t] \rightarrow CF^*(X,H_{n_0})[t],
\end{equation} 
where $t$ is now a formal parameter of degree $-1$ and $t^2 = 0$. 

\begin{lemma}\label{lem: commutes with partial-t}
Let $j \in G$ such that $\bq(j) = k$. Define $G' := G \setminus \{j\}$, and $\bq':= \bq_{|G'}$. 
Suppose $\bu \in \Mms^\al_{d,\bp,\bq,\bn}(\bx)$, and $\bu' \in \Mms^\al_{d,\bp,\bq',\bn}(\bx)$ is the corresponding element. Then the diagram
\begin{equation}
 \begin{tikzcd}
    t^{i_1}\cdot o_{x_1} \otimes \ldots \otimes t^{i_k} \cdot o_{x_k} \otimes \ldots \otimes t^{i_d} \cdot o_{x_d} \arrow{r}{o_\bu} \arrow{d} &  t^{i_0} \cdot o_{x_0} \arrow{d}{} \\
    t^{i_1} \cdot o_{x_1} \otimes \ldots  \otimes t^{i_k-1} \cdot o_{x_k} \otimes \ldots \otimes t^{i_d} \cdot o_{x_d} \arrow{r}{o_{\bu'}}& t^{i_0-1} \cdot o_{x_0}
 \end{tikzcd}
\end{equation}
is commutative up to the sign $(-1)^{\Delta}$, where $\Delta = 1+\abs{t^{i_{k-1}}x_{k-1}}+\dotsi+\abs{t^{i_1}x_1}$.
\end{lemma}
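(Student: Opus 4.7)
The plan is to exploit the fact that the underlying pseudo-holomorphic map $u$ is the same for $\bu$ and $\bu'$, so the Cauchy--Riemann operator $D_u$, the gluing isomorphism \eqref{gluing iso 1}, the tangent-space isomorphism \eqref{tangent space iso}, and the orientation-of-domain isomorphism \eqref{orientation of domains} coincide for the two moduli problems. Consequently, the discrepancy between the composite isomorphisms $o_\bu$ and $o_{\bu'}$ defined by \eqref{main iso} is forced to come entirely from the single extra $\Ts_j^\vee$ factor present in $o_\bu$ but absent in $o_{\bu'}$.

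Concretely, I would factor $o_\bu$ as
\[
\bigotimes_{l=1}^d o^{FG}_l \;\xrightarrow{\;\text{braiding}\;}\; \Ts_j^\vee \otimes \bigotimes_{l=1}^d o^{FG'}_l \;\xrightarrow{\;\id_{\Ts_j^\vee} \otimes\, o_{\bu'}\;}\; \Ts_j^\vee \otimes o^{G'}_0[3-2d] \;=\; o^G_0[3-2d],
\]
using the identifications $o^{FG}_k = \Ts_j^\vee \otimes o^{FG'}_k$ and $o^G_0 = \Ts_j^\vee \otimes o^{G'}_0$, with $\Ts_j^\vee$ placed on the left in each. Coherence of the symmetric monoidal structure on $\mathbb{G}$-graded modules lets one rearrange the intermediate Koszul and gluing steps of \eqref{main iso} so that the braiding moving $\Ts_j^\vee$ to the leftmost slot is performed first, after which the remaining manipulations are literally those defining $o_{\bu'}$, just carrying an inert $\Ts_j^\vee$ on the left.

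Two independent Koszul contributions then arise. First, the braiding moves the degree-$1$ (mod $2$) factor $\Ts_j^\vee$ past $o^{FG'}_1, \dots, o^{FG'}_{k-1}$; since $G_l = \emptyset$ forces $|o^{FG'}_l| \equiv |t^{i_l} x_l|$ for $l < k$, this contributes the Koszul sign $(-1)^{\sum_{l<k} |t^{i_l} x_l|}$. Second, the tensor-of-morphisms rule $(f \otimes g)(a \otimes b) = (-1)^{|g||a|} f(a) \otimes g(b)$ applied to $(\id_{\Ts_j^\vee} \otimes o_{\bu'})(e_j \otimes v)$ contributes a further factor of $(-1)^{|o_{\bu'}|\cdot |\Ts_j^\vee|} = (-1)^{(3-2d)(-1)} = -1$, because the total degree of $o_{\bu'}$ is $3-2d$, which is odd. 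Combining these with the naive interpretation of the vertical arrows $V_L$ and $V_R$ (which simply strip off the formal generator $e_j$ without introducing further Koszul signs) yields precisely the sign $(-1)^{1 + \sum_{l<k} |t^{i_l} x_l|}$ claimed.

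The main obstacle is purely bookkeeping: one must commit to, and consistently apply, conventions for where $\Ts_j^\vee$ is positioned within the ordered tensor products $\Ts_{G_k}$ and $\Ts_G$, for the sign rule when tensoring two morphisms, and for whether the vertical maps are read with or without Koszul signs. Any alternative convention merely shifts the sign between $V_L, V_R$ and $\Delta$; the non-trivial content of the lemma is the observation that the two contributions above -- the Koszul braiding past the first $k-1$ inputs, and the odd degree $3-2d$ of $o_{\bu'}$ -- combine additively to give the stated $\Delta = 1 + \sum_{l<k}|t^{i_l}x_l|$.
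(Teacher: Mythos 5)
Your proof is correct and follows essentially the same route as the paper's terse argument, which identifies the sign discrepancy as the Koszul cost of repositioning the extra line $\Ts_j^\vee$ (the paper: ``commute $\Ts_j^\vee$ through $o^{FG}_{k-1},\dots,o^{FG}_1$ and $\Ts_s^\vee$''). The only cosmetic divergence is in the accounting of the residual $-1$: the paper attributes it to commuting $\Ts_j^\vee$ past $\Ts_s^\vee$, whereas you obtain it from the Koszul sign of the tensored morphism $\id_{\Ts_j^\vee}\otimes o_{\bu'}$; these coincide because the odd parity of $\deg o_{\bu'} = 3-2d$ comes precisely from the trivialization of $\Ts_s$ in \eqref{main iso}. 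One small inaccuracy to flag: $G_l$ need not be empty for $l<k$, but the congruence $\abs{o^{FG'}_l}\equiv\abs{t^{i_l}x_l}\bmod 2$ that you actually use holds regardless, directly from the degree computation.
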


\begin{proof}
The isomorphisms of the top and bottom arrows differ only in the first Koszul isomorphism of \eqref{main iso}. The difference is whether one commutes $\Ts_j^{\vee}$ (appearing in $o^{FG}_k$) through $o^{FG}_{k-1},\dots, o^{FG}_{1}$ and $\Ts_s^\vee$ before trivializing it. The sign difference is the Koszul sign $(-1)^{\Delta}$.
\end{proof}

\begin{corollary}\label{cor:lambdadeltmod}
The maps 
\begin{equation}
    \tilde{\ell}^{d,\bn} \defeq \sum_{\bx,\bp,\bq} \lambda^{d,\bn}_{\bp,\bq}(\bx)    
\end{equation}
are $\Z[\partial_t]$-module maps, i.e.
\begin{equation}
    \partial_t \tilde{\ell}^{d,\bn}(x_1,\ldots,x_d) = \sum_{j=1}^d (-1)^{1+\abs{x_1} + \dotsi + \abs{x_{j-1}} } \cdot \tilde{\ell}^{d,\bn}(x_1,\ldots,\partial_t x_j,\ldots,x_d).   
\end{equation}    
\end{corollary}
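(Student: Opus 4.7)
My plan is to verify the identity by expanding both sides as sums over moduli contributions and invoking Lemma~\ref{lem: commutes with partial-t} term-by-term. By multilinearity, it suffices to check the statement on pure tensors $x_l = t^{a_l}\xi_l$ with $a_l \in \{0,1\}$ and $\xi_l \in CF^*(X,H_{n_l})$, so I will assume this throughout.

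First I would expand the left-hand side. Since $t^2 = 0$ in $SC^*(X)$, the output $\tilde{\ell}^{d,\bn}(x_1,\ldots,x_d) = \sum_{\bp,\bq} \lambda^{d,\bn}_{\bp,\bq}(\ldots)$ decomposes into contributions from pairs $(\bp,\bq)$ with $|G| \in \{0,1\}$, and applying $\partial_t$ kills the $|G|=0$ part while extracting the coefficient of $t$ from the $|G|=1$ part. For each isolated $\bu \in \Mms^{\al,0}_{d,\bp,\bq,\bn}(\bx)$ with $G = \{g\}$ and $\bq(g) = k$, Lemma~\ref{lem: commutes with partial-t} provides the key identity
$$\partial_t \circ |o_\bu| = (-1)^{\Delta_k}\, |o_{\bu'}| \circ D_k,$$
where $\bu' \in \Mms^{\al,0}_{d,\bp,\emptyset,\bn}(\bx)$ is the element obtained by deleting $g$ from $G$, $D_k$ is the ``divide by $t$ in slot $k$'' isomorphism at the level of orientation lines, and $\Delta_k = 1+|x_1|+\cdots+|x_{k-1}|$ is precisely the sign appearing on the right-hand side of the corollary.

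Next I would expand the right-hand side. The $j$-th summand $\tilde{\ell}^{d,\bn}(x_1,\ldots,\partial_t x_j,\ldots,x_d)$ vanishes unless $a_j = 1$, and otherwise its input has $t$-multiplicity $0$ in slot $j$ and $a_l$ elsewhere. The contributing pairs $(\bp',\bq')$ are constrained both by matching the input $t$-multiplicities and by the fixed-$\bn$ constraint $|F'| = n_0 - \sum n_l$. The main technical step is to set up a bijection: LHS contributions indexed by $(k,\bu)$ correspond to RHS contributions in the $j=k$ summand indexed by $\bu'$, with $\bu$ determined from $\bu'$ by adjoining a single $g$ to $G$ with $\bq(g)=k$. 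The signs $(-1)^{\Delta_k}$ and $(-1)^{1+|x_1|+\cdots+|x_{k-1}|}$ match by construction, so summation yields the corollary.

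The expected main obstacle is careful bookkeeping of which moduli contributions appear on each side for a given $\bn$: one must check that RHS contributions with $|G'|\geq 1$ carry a different value of $|F'|$, hence sit in a different $\bn$-stratum, and therefore do not survive in the fixed $\bn$ under consideration, so that the bijection with LHS is exact and no residual terms remain. Once this flavour-counting and the Koszul-sign reassembly are pinned down, the identity follows directly from Lemma~\ref{lem: commutes with partial-t}.
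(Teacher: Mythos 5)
Your bijection between LHS contributions (indexed by $k$ and a rigid element with $|G|=1$) and RHS contributions in the $j=k$ summand with $G'=\emptyset$ is the correct core of the argument, and Lemma~\ref{lem: commutes with partial-t} does make those signs match. The gap is in your treatment of the RHS contributions with $|G'|\ge 1$. For a fixed $\bn$, the constraint $|F'| = n_0 - \sum_l n_l$ does \emph{not} force $|G'| = 0$: the matching conditions give $|G'| = \left(\sum_l a_l - 1\right) - |F'|$, which depends on the input $t$-degrees $(a_l)$, not on $\bn$. In particular, on the part of the domain where $\sum_l a_l = |F'| + 2$, the RHS picks up contributions with $|G'| = 1$ living in exactly the $\bn$-stratum under consideration. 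On that same part of the domain the LHS vanishes for free (since $\tilde\ell^{d,\bn}(x_1,\ldots,x_d)$ lands in $t^2 CF^*=0$), so your bijection produces nothing, and the $|G'| = 1$ contributions on the RHS are genuine elements of $t\cdot CF^*(X,H_{n_0})$ that must cancel among themselves.

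They do cancel, but demonstrating it is a second application of Lemma~\ref{lem: commutes with partial-t} rather than a stratum-counting observation. For each pair $j<l$ with $a_j=a_l=1$ and each fixed $\bp'$, the contribution of a rigid $\bu$ to the $j$th summand (via $\bq'$ with $\bq'(g')=l$) pairs off against its contribution to the $l$th summand (via $\bq''$ with $\bq''(g'')=j$). Applying $\partial_t$ to the output in each case and using the lemma to reduce both to $o_{\bu}^{\bq_\emptyset}$ applied to the same argument, the relative sign between the two terms works out to $(-1)^{|\xi_j|-|x_j|}=-1$, where $x_j=t\xi_j$, so they cancel. This cancellation step should be added to your proof; once it is, the plan is sound.
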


\subsection{The $L_\infty$ relations} \label{subsec: The L-infty relations}

By adding the maps $\tilde{\ell}^{d,\bn}$ for all possible weights $\bn$, we get linear maps
\begin{equation}
    \tilde{\ell}^d: SC(X)^{\otimes d} \rightarrow SC(X) [3-2d].
\end{equation}
We can now show that these maps satisfy the $L_{\infty}$ relations. Recall that the boundary components of the $1$-dimensional moduli spaces $\Mms^{\al,1}_{d,\bp,\bn}(\bx)$ have a decomposition
\begin{equation}\label{codimension 1 boundary}
    \partial\overline{\Mms}^{\al,1}_{d,\bp,\bn}(\bx) = \bigsqcup_{(T,\mathbf{F})\ \text{s.s.}}\Mms^{\al,0}_{T,\mathbf{F},\bn}(\bx_e).
\end{equation}
The tree $T$ must have exactly one internal edge, hence $T$ has two vertices which we denote $v_+$ and $v_-$, where $v_+$ is closest to the root. Let $d_{\pm}$ be the number of outgoing edges from $v_\pm$. The leaves of $T$ which are adjacent to $v_\pm$ are labelled by ordered, disjoint, and complementary subsets $S_\pm\subseteq \{1,\dots,d\}$. Note that $\abs{S_-}=d_-$ and $\abs{S_+}=d_+-1$ since one of the outgoing edges of $v_+$ is an internal edge and not a leaf.
\begin{figure}
    \centering
    \begin{tikzpicture}[x=0.75pt,y=0.75pt,yscale=-.8,xscale=1]
    \draw    (160,184) -- (287,185) ;
    \draw    (287,185) -- (347,235) ;
    \draw    (287,185) -- (457,135) ;
    \draw    (287,185) -- (455,45) ;
    \draw    (347,235) -- (463,195) ;
    \draw    (347,235) -- (455,296) ;
    \draw    (347,235) -- (453,327) ;

    \draw (271,165.4) node [anchor=north west][inner sep=0.75pt]    {$v_{+}$};
    \draw (325,234.4) node [anchor=north west][inner sep=0.75pt]    {$v_{-}$};
    \draw (319,193.4) node [anchor=north west][inner sep=0.75pt]    {$e$};
    \draw (456,318.4) node [anchor=north west][inner sep=0.75pt]    {$s_{1}^{-}$};
    \draw (459,285.4) node [anchor=north west][inner sep=0.75pt]    {$s_{2}^{-}$};
    \draw (465,181.4) node [anchor=north west][inner sep=0.75pt]    {$s_{d_{-}}^{-}$};
    \draw (462,219) node [anchor=north west][inner sep=0.75pt]   [align=left] {.\\.\\.};
    \draw (459,126.4) node [anchor=north west][inner sep=0.75pt]    {$s_{1}^{+}$};
    \draw (458,29.4) node [anchor=north west][inner sep=0.75pt]    {$s_{d_{+} -1}^{+}$};
    \draw (463,62) node [anchor=north west][inner sep=0.75pt]   [align=left] {.\\.\\.};
    \end{tikzpicture}
    \caption{}
    \label{fig:my_label}
\end{figure}

For each pair $d_\pm$ satisfying $d_+ + d_- = d + 1$, the number of possible $d$-leafed trees $T$ with $1$ internal edge such that $\deg(v_\pm) = d_\pm$ is $d!/(d_-!(d_+-1)!)$, which is the count of partitions $(S_+,S_-)$. Each such partition dictates the weights and Hamiltonian orbits associated with the vertices $v_\pm$ as follows,
\begin{align}
    \bn_+ &= (n_0, n_e, n_{s^+_1},\dots,n_{s^+_{d_+-1}}) \quad\text{and}\quad \bn_- = (n_e,n_{s^-_1},\dots,n_{s^-_{d_-}}) \\
    \bx_+ &= (x_0, x_e ,x_{s^+_1},\dots,x_{s^+_{d_+-1}}) \quad\quad\quad\quad \bx_- = (x_e, x_{s^-_1},\dots,x_{s^-_{d_-}}),
\end{align}
where $S_+ = \{ s^+_1<\dots <s^+_{d_+-1}\}$, $S_- = \{ s^-_1<\dots< s^-_{d_-}\}$, and $e$ is the unique internal edge of $T$.

Let $\bp_{\pm}$ be the induced flavours from $\bp$ on $F_{v_\pm}$. Then
\begin{equation}
    \Mms^{\al,0}_{T,\mathbf{F},\bn}(\bx_e) = \Mms^{\al,0}_{d_+,\bp_+,\bn_+}(\bx_+)\times \Mms^{\al,0}_{d_-,\bp_-,\bn_-}(\bx_-).
\end{equation}

Given a new map $\bq:G\rightarrow \{1,\dots,d\}$, there is a way of breaking $\bq$ into flavours $\bq_{+}: G_+\rightarrow S_{+}\cup\{e\}$ and $\bq_-: G_-\rightarrow S_-$ for each of the vertices of $T$. We take $G_+ = G$ with the flavour
\begin{equation}
    \bq_+(g) =
    \begin{cases*}
        e & $\text{if}\ \bq(g)\in S_-,$\\
        \bq(g) &$\text{otherwise}.$
    \end{cases*}
\end{equation}

Similarly, we take $G_- = \bp_+^{-1}(e)\sqcup \bq_+^{-1}(e)$ with the flavour
\begin{equation}
    \bq_-(g) =
    \begin{cases*}
        \bp(g) & $\text{if}\ g\in \bp_+^{-1}(e),$\\
        \bq(g) & $\text{if}\ g\in \bq_+^{-1}(e)$.
    \end{cases*}
\end{equation}

Note that the pairs $(\bp_\pm,\bq_\pm)$ obtained this way are \emph{composable}, in the sense that
\begin{equation}
    \abs{G_-} = \abs{\bp_+^{-1}(e)} + \abs{\bq_+^{-1}(e)}.
\end{equation}
More importantly, any composable pair $(\bp_\pm,\bq_\pm)$ arises from the construction above for an appropriate $\bq:G\rightarrow \{1,\dots, d\}$. Fix such a map $\bq$ and consider a pair $(\bu_+,\bu_-)$ in the resulting codimension $1$ boundary stratification,
\begin{equation}\label{boundary decomposition}
    \partial\overline{\Mms}^{\al,1}_{d,\bp,\bq,\bn}(\bx) = \bigsqcup_{T,\mathbf{F}} \Mms^{\al,0}_{d_+,\bp_+,\bq_+,\bn_+}(\bx_+)\times \Mms^{\al,0}_{d_-,\bp_-,\bq_-,\bn_-}(\bx_-).
\end{equation}
For ease of notation, we set
\begin{equation}
    \Mms^{\al}(\bx) = \overline{\Mms}^{\al,1}_{d,\bp,\bq,\bn}(\bx)\quad\text{and}\quad \Mms_{\pm}^{\al}(\bx_\pm) = \Mms^{\al,0}_{d_\pm,\bp_\pm,\bq_\pm,\bn_\pm}(\bx_\pm).
\end{equation}
Then, we have a gluing isomoprhism 
\begin{equation}\label{gluing iso 2}
    \lambda(T_{\bu_+}\Mms_+^{\al}(\bx_+)) \otimes \lambda(T_{\bu_-}\Mms_-^{\al}(\bx_-)) \otimes \Ts_e \cong \lambda(T_{\bu}\Mms^{\al}(\bx)),
\end{equation}
where $\bu$ is the gluing of $\bu_+$ and $\bu_-$ with small gluing parameter $\delta$, and $\Ts_{e}\cong \mathbb{R}[-1]$ is a trivial line which is identified with the tangent line $\lambda(T_{\delta}[0,\epsilon))$, oriented in the direction of increasing $\delta$.

Using the isomorphism of \eqref{main iso}, we have
\begin{align}
    o_{s_1^{-}}^{(FG)_-}\otimes\dots \otimes o_{s_{d_-}^{-}}^{(FG)_-} &\rightarrow \Ts_{s_-}^{\vee}\otimes o_e^{G_-} \otimes \lambda(T_{\bu_-}\Mms_-^{\al}(\bx_-))^{\vee}\otimes  \mathbb{R}[2-2d_-]\label{iso 5}\\
    o_{e}^{(FG)_+}\otimes\dots \otimes o_{s_{d_+-1}^{+}}^{(FG)_+} &\rightarrow \Ts_{s_+}^{\vee}\otimes o_0^{G_+} \otimes \lambda(T_{\bu_+}\Mms_+^{\al}(\bx_+))^{\vee}\otimes  \mathbb{R}[2-2d_+].\label{iso 6}
\end{align}
After composing these two isomorphisms, we get
\begin{align}\label{iso 7}
    &o_{s_1^{-}}^{(FG)_-}\otimes\dots \otimes o_{s_{d_-}^{-}}^{(FG)_-}\otimes  o_{s_1^+}^{(FG)_+}\otimes\dots \otimes o_{s_{d_+-1}^{+}}^{(FG)_+}\\
    &\xrightarrow{\text{Kzl}_1+ \eqref{iso 5}} \Ts_{s_-}^{\vee} \otimes  o_e^{G_-}\otimes o_{s_1^+}^{(FG)_+}\otimes\dots \otimes o_{s_{d_+-1}^{+}}^{(FG)_+} \otimes \lambda(T_{\bu_-}\Mms_-^{\al}(\bx_-))^{\vee}[2-2d_-]\notag\\
    &\xrightarrow{\eqref{iso 6} +\text{Kzl}_2} \Ts_{s_-}^{\vee} \otimes \Ts_{s_+}^{\vee} \otimes o_0^{G_+} \otimes\lambda(T_{\bu_+}\Mms_+^{\al}(\bx_+))^{\vee} \otimes \lambda(T_{\bu_-}\Mms_-^{\al}(\bx_-))^{\vee} [2-2d].\notag\\
    &\xrightarrow{\eqref{gluing iso 2} +\text{Kzl}_3} \Ts_{s_-}^{\vee}\otimes \Ts_{s_+}^{\vee} \otimes o_0^{G_+} \otimes \lambda(T_{\bu}\Mms^{\al}(\bx))^{\vee}\otimes \Ts_e[2-2d].\notag
\end{align}
At the same time, we could use also \eqref{main iso} for $\bu$ and directly obtain an isomorphism
\begin{align}\label{iso 8}
    o_1^{FG}\otimes\dotsi\otimes o_d^{FG}
    &\xrightarrow{\text{Kzl}_{4}}o_{s_1^{-}}^{(FG)_-}\otimes\dots \otimes o_{s_{d_-}^{-}}^{(FG)_-}\otimes  o_{s_1^+}^{(FG)_+}\otimes\dots \otimes o_{s_{d_+-1}^{+}}^{(FG)_+} \\
    &\xrightarrow{\eqref{main iso} + \text{Kzl}_5} \Ts_{s}^{\vee} \otimes o_0^{G_+} \otimes \lambda(T_{\bu}\Mms^{\al}(\bx))^{\vee} \otimes \mathbb{R}[2-2d].
\end{align}
After using the given trivializations of $\Ts_s$, $\Ts_{s_\pm}$ and $\Ts_e$, the isomorphisms \eqref{iso 7} and \eqref{iso 8} agree up to a sign $(-1)^{\S}$, where $\S = \sum_{i=1}^5 \text{Kzl}_i$ is a sum of Koszul signs. We can compute that
\begin{equation}\label{boundary signs}
    \S = 1 + \epsilon(\sigma;x_1,\dots,x_d),
\end{equation}
where $\sigma$ is the unshuffle $(s^-_1,\dots,s^-_{d_-},s^{+}_{1},\dots,s^{+}_{d_+-1})$. Since the line $\Ts_e$ is trivialized in the inward pointing direction of the $1$-dimensional moduli space $\Mms^{\al}(\bx)$, which is smooth and compact, we deduce that the sum of the contributions $\abs{o_{\bu_+} \circ \left(o_{\bu_-} \otimes \id^{\otimes(d_+-1)}\right)}$ of the boundary points, weighted by their associated signs \eqref{boundary signs}, is $0$. It follows that the operations $\tilde \ell^d$ satisfy the $L_{\infty}$ equation \eqref{L-infty equation} as claimed. 

It is immediate that the modified $L_\infty$ operations $\ell^d$, defined in \eqref{eq:modified-elld}, are symmetric. 
By combining the $L_\infty$ relations for $\tilde \ell^d$ with Lemma \ref{lem: commutes with partial-t}, it is immediate that $\ell^d$ also satisfy the $L_\infty$ equation.

\subsection{Wrapped Fukaya category and closed--open map}

We now sketch how to define the maps $co_{d,k}$, and prove that they satisfy \eqref{eq:CO-rel}. 
This amounts to combining the arguments above with those from \cite[Appendix B]{Sheridan-versality}.

We first observe that 
$$\Rms^\disc_{d,k} = \mathrm{Conf}_{d,k}(\mathbb{H})/\mathrm{Aff}(\mathbb{H})$$ 
is the quotient of the space of configurations of $k$ ordered boundary points and $d$ interior points on the upper half-plane $\mathbb{H}$ by the group of affine automorphisms $\mathrm{Aff}(\mathbb{H}) = \{z \mapsto az+b,a \in \R_{>0},b\in \R\}$. 
As in Section \ref{sec:or_dom}, this gives an isomorphism
$$\lambda(T \Rms^\disc_{d,k,\bp}) = \R[-2d] \otimes \Ts_F \otimes \Ts_s^\vee \otimes \Ts_t^\vee \otimes \bigotimes_{j=1}^k \Ts_j,$$
where $\Ts_s$ corresponds to the scaling factor in $\mathrm{Aff}(\mathbb{H})$ and $\Ts_t$ corresponds to the translation factor. 
These are trivialized by taking the direction of increasing $a$, respectively $b$.

Recall that boundary strata of $\overline{\Rms}^\disc_{d,k,\bp}$ are indexed by data $(T,\bF)$. 
The gluing map near a boundary stratum induces an isomorphism
$$\bigotimes_{v \in V^{\text{solid}}(T)} \lambda(T\Rms^\disc_{d_v,k_v,\bp_v}) \otimes \bigotimes_{v \in V^{\text{dash}}(T)} \lambda(T \Rms^\al_{d_v,\bp_v}) \otimes \bigotimes_{e \in \text{iE}(T)} \Ts_e \cong \lambda(T\Rms^\disc_{d,k,\bp}).$$
The analogue of the commutative diagram \eqref{eq:gluing_orientations} now says that the resulting isomorphism
\begin{multline*}
    \bigotimes_{v \in V^{\text{solid}}(T)} \R[-2d_v] \otimes \Ts_{F_v} \otimes \Ts_{s_v}^\vee \otimes \Ts_{t_v}^\vee \otimes \bigotimes_{j=1}^{k_v}\Ts_{j_v} \otimes \bigotimes_{v \in V^{\text{dash}}(T)} \R[2-2d_v] \otimes \Ts_{F_v} \otimes \Ts_{s_v}^\vee \otimes \bigotimes_{e \in \text{iE}(T)} \Ts_e \\
    \cong \R[-2d] \otimes \Ts_F \otimes \Ts_s^\vee \otimes \Ts_t \otimes \bigotimes_{j=1}^k \Ts_j
\end{multline*}
is induced by Koszul sign rule and natural identifications of trivialized lines, together with identifications (respecting the given trivializations)
\begin{eqnarray*}
    \Ts_{s_v} & \cong & \Ts_e \mbox{ when $e \in \iE(T)$ is the incoming edge of $v \in V(T)$} \\
    \Ts_{t_v} & \cong &\Ts_{j_w} \mbox{ when the incoming solid edge of $v$ is the $j_w$th outgoing solid edge of $w$}\\
    \Ts_{s_v} & \cong  &\Ts_s  \mbox{ when $v \in V(T)$ is the root vertex}\\
    \Ts_{t_v} & \cong &\Ts_t \mbox{ when $v \in V(T)$ is the root vertex}, \\
\end{eqnarray*}
and the natural trivialization $\Ts^\vee \otimes \Ts \cong \R$ for any line $\Ts$.

Now, let $\bw = (n_1,\ldots,n_d,w_0,\ldots,w_k)$ be a set of weights; $\bx$ a corresponding tuple of orbits and $\by$ a corresponding tuple of chords; and $\bp:F \to \{1,\ldots,k+d\}$ and $\bq:G \to \{1,\ldots,k+d\}$ be flavours such that 
\begin{equation*}
\begin{cases}
    &w_0 = n_1+\ldots+n_d+w_1+\ldots+w_k + \abs{F},    \\
    &\abs{\bp^{-1}(j)} \cdot \abs{\bq^{-1}(j)} = 0 \mbox{ for $j=1,\ldots,k+d$.}
\end{cases}    
\end{equation*}
Define $F_{x_p} \defeq \bp^{-1}(p)$ and $F_{y_p} \defeq \bp^{-1}(d+p)$; and define $G_{x_p}$ and $G_{y_p}$ similarly, but using $\bq$ instead of $\bp$.
Set
$$o_{x_p}^{FG} \defeq \Ts_{G_{x_p}}^\vee \otimes \Ts_{F_{x_p}}^\vee \otimes o_{x_p} \cong t^{i_p} \cdot o_{x_p},$$
where $i_p = \abs{F_{x_p}} + \abs{G_{x_p}}$. 
Set 
$$o_{y_p}^{FG} \defeq \Ts_{G_{y_p}}^\vee \otimes \Ts_p^\vee \cong t^{j_p}\cdot o_{y_p}[1]$$
for $1 \le p \le k$; for $p=0$ we define
$$o_{y_0}^G \defeq \Ts_G^\vee \otimes o_{y_0} \cong t^{j_0} \cdot o_{y_0},$$
where $j_0 = \abs{G}$. 
We now explain how a rigid element $\bu \in \Mms^\disc_{d,k,\bp,\bq,\bw}(\bx,\by) \defeq \Mms^\disc_{d,k,\bp,\bw}(\bx,\by)$ determines an isomorphism 
$$o_\bu: t^{i_1}\cdot o_{x_1} \otimes \ldots \otimes t^{i_d}\cdot o_{x_d} \otimes t^{j_1}\cdot o_{y_1}[1] \otimes \ldots \otimes t^{j_k}\cdot o_{y_k}[1] \xrightarrow{\sim} t^{j_0} \cdot o_{y_0}[2-2d].$$
We have isomorphisms
\begin{align*}
    o^{FG}_{x_1} \otimes \ldots \otimes o^{FG}_{x_d} \otimes o^{FG}_{y_1} \otimes \ldots \otimes o^{FG}_{y_k} & \cong \Ts_G^\vee \otimes o_{x_1} \otimes \ldots \otimes o_{x_d} \otimes o_{y_1} \otimes \ldots \otimes o_{y_k} \otimes \Ts_F^\vee \otimes \bigotimes_{p=1}^k \Ts_p^\vee \\
    &\cong \Ts_G^\vee \otimes o_{y_0} \otimes \lambda(D_u)^\vee \otimes \Ts_F^\vee \otimes \bigotimes_{p=1}^k \Ts_p^\vee\\
    &\cong \Ts_G^\vee \otimes o_{y_0} \otimes \lambda(T_u\Mms^\disc_{d,k,\bp}(\bx,\by))^\vee \otimes \lambda(T_r\Rms^\disc_{d,k,\bp}) \otimes \Ts_F^\vee \otimes \bigotimes_{p=1}^k \Ts_p^\vee\\
    & \cong \Ts_G^\vee \otimes \Ts_s^\vee \otimes o_{y_0} \otimes \Ts_t^\vee \otimes \R[-2d],
\end{align*}
whose definitions are parallel to those in \eqref{main iso}. 
Applying the trivializations of $\Ts_s$ and $\Ts_t$, we obtain the desired isomorphism $o_{\bu}$. 

Summing the normalizations of the maps $o_{\bu}$ defines the maps $co_{d,k}$ from \eqref{eq:cokd}, parallel to the definition of $\tilde \ell^d$. 
The proofs that the maps $co_{d,k}$ are graded symmetric in the entries $x_j$ (in the sense of \eqref{eq:cokd_symm}), $\partial_t$-equivariant (in the sense of \eqref{eq:delt_eq}), and satisfy the $L_\infty$ relations \eqref{eq:CO-rel}, are parallel to the corresponding proofs for the maps $\tilde \ell^d$. 
In particular, all signs appearing in these equations arise from the Koszul sign rule. 
This is all that was required for the construction of the $A_\infty$ structure on the wrapped Fukaya category, and the $L_\infty$ homomorphism $\CO$ (see Section \ref{subsec:co}).

\bibliographystyle{alpha}
\bibliography{bibliography.bib}

\begin{thebibliography}{FOOO09}

\bibitem[Abo10]{Abouzaid-geometric-criterion}
Mohammed Abouzaid.
\newblock A geometric criterion for generating the {F}ukaya category.
\newblock {\em Publ. Math. Inst. Hautes \'Etudes Sci.}, (112):191--240, 2010.

\bibitem[Abo15]{Abouzaid13}
Mohammed Abouzaid.
\newblock Symplectic cohomology and {V}iterbo's theorem.
\newblock In {\em Free loop spaces in geometry and topology}, volume~24 of {\em
  IRMA Lect. Math. Theor. Phys.}, pages 271--485. Eur. Math. Soc., Z\"urich,
  2015.

\bibitem[AGV24]{Abouzaid-Groman-Varolgunes}
Mohammed Abouzaid, Yoel Groman, and Umut Varolgunes.
\newblock Framed {$E_2$} structures in {F}loer theory.
\newblock {\em Adv. Math.}, 450:Paper No. 109755, 100, 2024.

\bibitem[All14]{Allocca}
Michael~P. Allocca.
\newblock Homomorphisms of {$L_\infty$} modules.
\newblock {\em J. Homotopy Relat. Struct.}, 9(2):285--298, 2014.

\bibitem[AS94]{Axelrod-Singer-II}
Scott Axelrod and I.~M. Singer.
\newblock Chern-{S}imons perturbation theory. {II}.
\newblock In {\em Perspectives in mathematical physics}, volume III of {\em
  Conf. Proc. Lecture Notes Math. Phys.}, pages 17--49. Int. Press, Cambridge,
  MA, 1994.

\bibitem[AS10]{open-string-analogue}
Mohammed Abouzaid and Paul Seidel.
\newblock An open string analogue of {V}iterbo functoriality.
\newblock {\em Geom. Topol.}, 14(2):627--718, 2010.

\bibitem[BEAS24]{ElAlami-Sheridan-2}
Matthew~Strom Borman, Mohamed El~Alami, and Nick Sheridan.
\newblock {Maurer--Cartan} elements in symplectic cohomology from
  compactifications.
\newblock Preprint, arXiv:2408.09221, 2024.

\bibitem[Bog78]{Bogomolov1978}
F.~A. Bogomolov.
\newblock Hamiltonian {K}\"{a}hlerian manifolds.
\newblock {\em Dokl. Akad. Nauk SSSR}, 243(5):1101--1104, 1978.

\bibitem[Fab20]{Fabert}
Oliver Fabert.
\newblock Higher algebraic structures in {H}amiltonian {F}loer theory.
\newblock {\em Adv. Geom.}, 20(2):179--215, 2020.

\bibitem[FM94]{Fulton-Macpherson}
William Fulton and Robert MacPherson.
\newblock A compactification of configuration spaces.
\newblock {\em Ann. of Math. (2)}, 139(1):183--225, 1994.

\bibitem[FOOO09]{FOOO}
Kenji Fukaya, Yong-Geun Oh, Hiroshi Ohta, and Kaoru Ono.
\newblock {\em Lagrangian intersection {F}loer theory: anomaly and obstruction.
  {P}art {II}}, volume 46.2 of {\em AMS/IP Studies in Advanced Mathematics}.
\newblock American Mathematical Society, Providence, RI; International Press,
  Somerville, MA, 2009.

\bibitem[Gan13]{Ganatra-thesis}
Sheel Ganatra.
\newblock Symplectic cohomology and duality for the wrapped fukaya category.
\newblock {\em Preprint, arXiv:1304.7312}, 2013.

\bibitem[GJ94]{Getzler-Jones}
Ezra Getzler and JDS Jones.
\newblock Operads, homotopy algebra and iterated integrals for double loop
  spaces.
\newblock Preprint, arXiv:hep-th/9403055, 1994.

\bibitem[GPS24]{GPS2}
Sheel Ganatra, John Pardon, and Vivek Shende.
\newblock Sectorial descent for wrapped {F}ukaya categories.
\newblock {\em J. Amer. Math. Soc.}, 37(2):499--635, 2024.

\bibitem[Kon99]{Kontsevich-operads-motives}
Maxim Kontsevich.
\newblock Operads and motives in deformation quantization.
\newblock {\em Lett. Math. Phys.}, 48(1):35--72, 1999.
\newblock Mosh\'e{} Flato (1937--1998).

\bibitem[Kon03]{Kontsevich-deformation-quantization}
Maxim Kontsevich.
\newblock Deformation quantization of {P}oisson manifolds.
\newblock {\em Lett. Math. Phys.}, 66(3):157--216, 2003.

\bibitem[KSV95]{Kimura-Stasheff-Voronov}
Takashi Kimura, Jim Stasheff, and Alexander~A. Voronov.
\newblock On operad structures of moduli spaces and string theory.
\newblock {\em Comm. Math. Phys.}, 171(1):1--25, 1995.

\bibitem[LM95]{LadaMarkl}
Tom Lada and Martin Markl.
\newblock Strongly homotopy {L}ie algebras.
\newblock {\em Comm. Algebra}, 23(6):2147--2161, 1995.

\bibitem[Mer11]{Merkulov}
Sergei~A. Merkulov.
\newblock Operads, configuration spaces and quantization.
\newblock {\em Bull. Braz. Math. Soc. (N.S.)}, 42(4):683--781, 2011.

\bibitem[MS12]{mcduff-salamon-Jholomorphiccurves}
Dusa McDuff and Dietmar Salamon.
\newblock {\em {$J$}-holomorphic curves and symplectic topology}, volume~52 of
  {\em American Mathematical Society Colloquium Publications}.
\newblock American Mathematical Society, Providence, RI, second edition, 2012.

\bibitem[MS17]{mcduff-salamon-introtosymptopology}
Dusa McDuff and Dietmar Salamon.
\newblock {\em Introduction to symplectic topology}.
\newblock Oxford Graduate Texts in Mathematics. Oxford University Press,
  Oxford, third edition, 2017.

\bibitem[PS23]{Pomerleano-Seidel}
Daniel Pomerleano and Paul Seidel.
\newblock The quantum connection, {Fourier--Laplace transform}, and families of
  {$A_\infty$} categories.
\newblock Preprint, arXiv:2308.13567, 2023.

\bibitem[RS17]{Ritter-Smith}
Alexander~F Ritter and Ivan Smith.
\newblock The monotone wrapped {F}ukaya category and the open-closed string
  map.
\newblock {\em Selecta Mathematica}, 23:533--642, 2017.

\bibitem[Sei02]{Seidel-deformations}
Paul Seidel.
\newblock Fukaya categories and deformations.
\newblock In {\em Proceedings of the {I}nternational {C}ongress of
  {M}athematicians, {V}ol. {II} ({B}eijing, 2002)}, pages 351--360. Higher Ed.
  Press, Beijing, 2002.

\bibitem[Sei08]{PL-theory}
Paul Seidel.
\newblock {\em Fukaya categories and {P}icard-{L}efschetz theory}.
\newblock Zurich Lectures in Advanced Mathematics. European Mathematical
  Society (EMS), Z\"urich, 2008.

\bibitem[She15]{Sheridan-CY}
Nick Sheridan.
\newblock Homological mirror symmetry for {C}alabi-{Y}au hypersurfaces in
  projective space.
\newblock {\em Invent. Math.}, 199(1):1--186, 2015.

\bibitem[She20]{Sheridan-versality}
Nick Sheridan.
\newblock Versality of the relative {F}ukaya category.
\newblock {\em Geom. Topol.}, 24(2):747--884, 2020.

\bibitem[Sie19]{Siegel}
Kyler Siegel.
\newblock Higher symplectic capacities.
\newblock Algebraic \& Geometric Topology, to appear; arXiv:1902.01490, 2019.

\bibitem[Tia87]{Tian1987}
Gang Tian.
\newblock Smoothness of the universal deformation space of compact
  {C}alabi-{Y}au manifolds and its {P}etersson-{W}eil metric.
\newblock In {\em Mathematical aspects of string theory ({S}an {D}iego,
  {C}alif., 1986)}, volume~1 of {\em Adv. Ser. Math. Phys.}, pages 629--646.
  World Sci. Publishing, Singapore, 1987.

\bibitem[Tod89]{Todorov1989}
Andrey~N. Todorov.
\newblock The {W}eil-{P}etersson geometry of the moduli space of {${\rm
  SU}(n\geq 3)$} ({C}alabi-{Y}au) manifolds. {I}.
\newblock {\em Comm. Math. Phys.}, 126(2):325--346, 1989.

\bibitem[Vit99]{Viterbo}
C.~Viterbo.
\newblock Functors and computations in {F}loer homology with applications. {I}.
\newblock {\em Geom. Funct. Anal.}, 9(5):985--1033, 1999.

\end{thebibliography}
\end{document}